\newtheorem{theorem}{Theorem}[section]
\newtheorem{corollary}[theorem]{Corollary}
\newtheorem{lemma}[theorem]{Lemma}
\newtheorem{proposition}[theorem]{Proposition}
\theoremstyle{definition}
\newtheorem{definition}[theorem]{Definition}
\newtheorem{remark}[theorem]{Remark}
\newtheorem{example}[theorem]{Example}
\numberwithin{equation}{section}
\newcommand{\eps}{\varepsilon}
\newcommand{\kf}{{{\mathbf k}(f)}}
\newcommand{\calL}{\mathcal{L}}
\newcommand{\calF}{\mathcal{F}}
\newcommand{\calG}{\mathcal{G}}
\newcommand{\calA}{\mathcal{A}}
\newcommand{\calC}{\mathcal{C}}
\newcommand{\calE}{\mathcal{E}}
\newcommand{\calV}{\mathcal{V}}
\newcommand{\calH}{\mathcal{H}}
\newcommand{\calN}{\mathcal{N}}
\newcommand{\bM}{\mathbf{M}}
\renewcommand{\P}{\operatorname{\mathds{P}}} 
\newcommand{\prt}{\partial}
\newcommand{\lra}{\leftrightarrow}
\newcommand{\bL}{\mathbf{L}}
\newcommand{\ol}{\overline}
\newcommand{\ul}{\underline}
\newcommand{\wh}{\widehat}
\newcommand{\wt}{\widetilde}
\DeclareMathOperator{\dist}{dist}
\DeclareMathOperator{\sign}{sign}
\def\bone{{\bf 1}}
\def\n{{\bf n}}
\def\bv{{\bf v}}
\def\bw{{\bf w}}
\renewcommand{\comment}[1]{}
\newcommand{\eq}{\begin{equation}}
\newcommand{\en}{\end{equation}}
\newcommand{\abs}[1]{\left\lvert #1 \right\rvert}
\title{Floodings of metric graphs}
\author{Krzysztof Burdzy and Soumik Pal}
\address{ Department of Mathematics, Box 354350, University of Washington, Seattle, WA 98195}
\email{burdzy@uw.edu}
\email{soumik@uw.edu}
\thanks{Research supported in part by NSF Grant DMS-1206276 and DMS-1612483. }
\begin{document}

\begin{abstract}
We consider random labelings of finite graphs conditioned on a small fixed number of peaks. We introduce a continuum framework where a combinatorial graph is associated with a metric graph and edges are identified with intervals.  
Next we consider a sequence of partitions of the edges of the metric graph with the partition size going to zero. As the mesh of the subdivision goes to zero, the conditioned random labelings converge, in a suitable sense, to a deterministic function which evolves as an increasing process of subsets of the metric graph that grows at rate one while maximizing an appropriate notion of entropy. We call such functions floodings. We present a number of qualitative and quantitative properties of floodings and some explicit examples. 
\end{abstract}

\maketitle

\section{Introduction}\label{intro}

Consider a finite graph with $N$ vertices. Randomly label the vertices by numbers $1,2,\ldots,N$, conditioned to have a fixed number $M$ of peaks (local maxima). The problem, a continuation of the project started in \cite{twinp}, is to understand the most likely locations of the peaks as a function of the underlying graph structure. Even for the case of $M=1$ or $M=2$, simple questions seem to be very hard to answer for random labelings on many natural graphs. The following is an example that was left open in \cite{twinp}. If a random labeling of a large torus is conditioned on having exactly two peaks, is the typical distance between the peaks comparable to the diameter of the torus? 

In this article we introduce a limiting procedure where asymptotically the labelings are amenable to analytical methods. Let us describe the procedure very roughly for now. All details can be found in Section \ref{subdiv}. Imagine that our graph has edge lengths and can be viewed as a metric space. Replace every edge by an interval and consider a sequence of finer and finer subdivisions of each interval. Thus for every metric graph we have a sequence of subdivision metric graphs with the number of vertices going to infinity that can be embedded in the original graph. Now, we fix $M$ and consider the sequence of random labelings on each subdivision graph, conditioned to have $M$ peaks, and take a limit as the number of vertices goes to infinity.

In an appropriate sense, the labelings converge to a continuum limit that we call ``flooding''.  Again, let us give a rough intuition regarding this limiting object. Consider again the random labeling of the finite graph with $N$ vertices. This static random object can be turned dynamic by considering a time parameter $t\in [0,1]$ and progressively exploring the subgraphs labeled by vertices $\{ \lfloor Ns\rfloor,\; (1-t)\le s\le 1  \}$. A moments reflection will convince the reader that, due to our constraint on having $M$ peaks, the stochastic process of subgraphs is increasing and yet always remains a union of at most $M$ connected components.  
This property persists in the subdivision limit where we obtain a stochastic process of closed subsets of the metric graph that is a union of at most $M$ connected components and ``absorbs'' boundary points at rate one until it covers the entire graph. An evocative visual would be to imagine the edges as pipes and water flowing through $M$ sources until it floods the entire graph. This inspires the name ``flooding''. 

Interestingly, the floodings on metric graphs are surprisingly rigid objects. The reason, in short, is large deviation theory. Each flooding can be studied by an entropy-like quantity (see \eqref{f5.1}) and the asymptotically most likely flooding is the one that maximizes this entropy. This is essentially the content of Theorem \ref{j22.1}. The rest of the paper analyzes the maximal-entropy floodings on trees and non-tree graphs and shows some remarkable properties (see Theorem \ref{f22.1}). For example, we show that, on arbitrary graphs, the optimal flooding, until the very end, has exactly $M$ connected components. That is to say, the floodings initiated from separate sources will never meet until the end. The optimal sources are given by centroids of metric trees.

The paper is organized as follows. It starts with Section \ref{prel} containing some preliminaries.   Section \ref{subdiv} will 
introduce subdivision graphs and floodings, and present an asymptotic result on random labelings. Properties of floodings will be studied in Section \ref{flood}. Finally, examples of floodings will be presented and examined in 
Section \ref{sec:example}.

\section{Preliminaries}\label{prel}

For an integer $N>0$, let $[N] =\{1,2,\dots, N\}$.
By abuse of notation, we will use $|\,\cdot\,|$  to denote the absolute value of a real number, cardinality of a finite set and length of a line segment.
For a real number $a$, we will write $ \lfloor a\rfloor $ to denote  the largest integer less than or equal to $a$ and $  \lceil a \rceil$ to denote the smallest integer greater than or equal to a.

We will usually denote graphs $\calG$, their vertex sets $\calV$ and edge sets $\calE$.
We will indicate adjacency of vertices $v$ and $y$ by $v\lra y$.
We will consider only finite simple graphs, i.e., graphs with no loops and multiple edges.

\subsection{Labelings}

 We will call a function $L: \calV \to [N]$ a labeling of $\calG$ or labeling of $\calV$ if it is a bijection. 
The inverse function will be denoted $R$.

We will say that a vertex $v\in \calV$ is a peak (of $L$) if and only if it is a local maximum of $L$, i.e., $L(v) > L(y)$ for all 
$y\lra v$.

We will use $\P$ to denote the distribution of a random (uniform, unconditioned) labeling. The symbol $\P_M$  will stand for the distribution $\P$ conditioned on existence of exactly $M$ peaks. 

Consider integers $M,N>0$ and a graph $\calG$ with $N$ vertices. We will describe the general structure of a (deterministic) labeling $L$ with exactly $M$ peaks. 
Let $v_1, v_2, \dots , v_M \in \calV$ be the peaks of $L$ and let $a_k = L(v_k)$ for $1\leq k \leq M$. Note that $v_m \not \lra v_\ell$ for $m\ne \ell$.
Assume without loss of generality that $a_1 < a_2 < \dots < a_M$.
The vertex $R(N)$ is a peak so $a_M=N$. If $N-1 \ne a_{M-1}$, i.e., $R(N-1)$ is not a peak, then we must have $R(N-1) \lra R(N)$. 
We will generalize this remark.
Let $C_k = \{R(N), R(N-1), \dots, R(N-k)\}$ for $0\leq k \leq N-1$.
If $1 \leq N-m < N$ and $N-m \ne a_\ell$ for all $1\leq \ell \leq M$, i.e., $R(N-m)$ is not a peak, then $R(N-m) \lra v$ for some $v\in C_{m-1}$.
If $1 \leq N-m < N$ and  $R(N-m)$ is a peak then  $R(N-m) \not \lra v$ for all $v\in C_{m-1}$. 

The above analysis suggests the following dynamic picture of a random labeling with one peak. Conditional on the position of the peak, all other values of the labeling should be chosen (randomly) by attaching them, from the highest to the lowest value, one by one, to the clusters $C_k$ of already occupied vertices. Intuition might suggest that $R(N-m)$ should be chosen uniformly from all  vertices adjacent to $C_{m-1}$. That this is not the case
can be demonstrated using very simple examples, see \cite[Example 2.1]{twinp}. 
A much more interesting example is provided by the comparison of computer
simulations of the Eden model (where new labels are attached uniformly on the boundary) and a random labeling conditioned on having a single peak, on a large discrete torus. The simulations presented in \cite[Sec. 2]{twinp} and a rigorous result, Theorem 4.2 in \cite{twinp}, suggest that the two mechanisms generate significantly different labelings (belonging to different ``universality'' classes).

\subsection{Large deviations for multinomial distribution}
As mentioned before, our results on floodings are closely related to the large deviations theory for multinomial distribution; see, e.g., \cite{DZ}. Although this type of results are classical, they provided little help with the current project. It was easier to repeat some of the classical computations rather than to reduce our claims to those that can be found in  the existing literature. This section contains a review of some elementary properties of the multinomial distribution, presented in a form that is convenient for the applications in this paper.

Consider a  graph $\calH$ consisting of $K$ disjoint subgraphs $\calH_j$, $1\leq j \leq K$. Suppose that each  of the subgraphs $\calH_j$ is isomorphic to the set of natural numbers (with edges  connecting neighbors). Let $\calV$ ($\calV_j$) denote the vertex set of $\calH$ ($\calH_j$, resp.) and let $v_1, \dots, v_K$ be the endpoints of the subgraphs $\calH_j$. 
Suppose that $N>0$  and  $N= N_1 + N_2 + \dots + N_K$ where each $N_j$ is a non-negative integer.

\begin{lemma}\label{lem:multinomialcount}
(i) Let $\calN$ be the number 
of (deterministic) functions $R: [N] \to \calV$ satisfying the following conditions: (a) $R$ is injective, (b) $|R([N])\cap \calV_j| = N_j$ for $j\in [K]$, (c) $R([N])\cap \calV_j$ is connected for each $j$, and (d) $v_1, \dots, v_K$ are the only local maxima of $R^{-1}$. 

Suppose that for some non-negative real numbers $a_1, a_2,\dots, a_K, a$ and integers $n$ and $K_1$ we have
$|N_j - n a_j |\leq K_1$ for $j\in[K]$, and
$a_1+ a_2+\dots+ a_K= a$. 
Then for any fixed $K$, $K_1$ and a real number $\zeta>0$, uniformly in $a\in[0,\zeta]$ and $a_j$'s, when $n\to \infty$,
\begin{align}\label{m24.1}
\log \calN 
&= N \log N 
-\sum_{j=1}^K N_j \log N_j + O(\log N)\\
&= na \log (na) 
- \sum_{j=1}^K n a_j \log (n a_j) + O(\log n) \nonumber \\
&= n \left(a  \log a
- \sum_{j=1}^K  a_j  \log a_j\right) + O(\log n). \nonumber 
\end{align}


(ii) Now suppose that $R$ is a uniformly chosen random element  from the set of functions described in (i) above. 
Then
for every $\eps >0$ there is $N_*$ such that for $N\geq N_*$ and all non-negative integers $N_1, N_2, \dots , N_K$ such that $N= N_1 + N_2 + \dots + N_K$, we have
\begin{align}\label{m30.2}
\P\left( \sup_{0< t \leq 1}\big||R([ \lceil N t\rceil ])\cap \calV_j| - N_j t\big| \leq \eps N \right)
\geq 1-\eps.
\end{align}

\end{lemma}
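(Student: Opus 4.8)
The plan is to first reduce the count $\calN$ in part~(i) to a multinomial coefficient via a structural observation and then apply Stirling's formula, and to deduce part~(ii) from the same structure together with a concentration bound for sampling without replacement. For the structure, write each ray as $v_j=u^{(j)}_0\lra u^{(j)}_1\lra u^{(j)}_2\lra\cdots$. By~(c), $R([N])\cap\calV_j$ is a connected sub-path $\{u^{(j)}_a,\dots,u^{(j)}_b\}$ with $b-a+1=N_j$; by~(d) the endpoint $v_j=u^{(j)}_0$ must belong to $R([N])$ (it is a local maximum of $R^{-1}$, so it lies in its domain), which forces $a=0$. Thus the occupied part of $\calH_j$ is exactly the initial segment $u^{(j)}_0,\dots,u^{(j)}_{N_j-1}$ (in particular $\calN=0$ unless every $N_j\geq1$, which I henceforth assume), and $v_j$ being the \emph{only} local maximum of $R^{-1}$ on this path forces the labels to be strictly decreasing along it: if the restricted labeling had an ascent, the last one (read from the far endpoint) would create a second local maximum distinct from $v_j$. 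Hence $R$ is determined by the ordered partition of $[N]$ into blocks $S_j:=R^{-1}(\calV_j)$ with $|S_j|=N_j$ (the labels of $S_j$ being placed on $u^{(j)}_0,u^{(j)}_1,\dots$ in decreasing order), so $\calN=N!/(N_1!\,N_2!\cdots N_K!)$.

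\textbf{Asymptotics for part (i).} Now $\log\calN=\log N!-\sum_{j=1}^K\log N_j!$, and Stirling's estimate $\log n!=n\log n-n+O(\log n)$, together with $\sum_jN_j=N$ (which cancels the linear terms), yields the first line of~\eqref{m24.1} with error $O(K\log N)=O(\log N)$. For the remaining two lines I would substitute $N=na+O(1)$ and $N_j=na_j+O(1)$ — the $O(1)$'s uniform because $|N_j-na_j|\leq K_1$ and $K,K_1$ are fixed — and check, using $a\in[0,\zeta]$ (so that $\log(na)$ and each $\log N_j$ are $O(\log n)$), that replacing $N\log N$ by $na\log(na)$ and $N_j\log N_j$ by $na_j\log(na_j)$ costs only $O(\log n)$; the third line then follows from $\sum_ja_j=a$ by writing $\log(na_j)=\log n+\log a_j$ and cancelling the $\log n$ terms. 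The only point needing a touch of care is uniformity when some $a_j$ is close to $0$, but then both $N_j\log N_j$ and $na_j\log(na_j)$ are $O(\log n)$ (with the convention $0\log 0=0$) and are absorbed into the error.

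\textbf{Concentration for part (ii).} By the structural description a uniform $R$ corresponds to a uniform ordered partition $(S_1,\dots,S_K)$ of $[N]$, so each $S_j$ is a uniformly random $N_j$-element subset of $[N]$; moreover, directly from $S_j=R^{-1}(\calV_j)$ one has $|R([m])\cap\calV_j|=|S_j\cap[m]|$ for every $m$, i.e.\ the number of ``special'' positions among the first $m$ when $N_j$ of the $N$ positions are marked special. Writing $|S_j\cap[m]|=\sum c_i$ as a sum over the $N_j$ sampled positions with $c_i=\I(i\leq m)$ and population mean $m/N$, Hoeffding's inequality for sampling without replacement gives $\P\big(\big||S_j\cap[m]|-N_j m/N\big|>\eps N/3\big)\leq 2\exp(-2\eps^2N/9)$, with a constant that does not depend on $m$ or on $(N_1,\dots,N_K)$. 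A union bound over $m\in[N]$ then shows $\P\big(\exists\,m\leq N:\ \big||S_j\cap[m]|-N_j m/N\big|>\eps N/3\big)\leq 2N\exp(-2\eps^2N/9)$, which is below $\eps$ once $N\geq N_*(\eps)$, uniformly in $(N_1,\dots,N_K)$. On the complementary event, for any $t\in(0,1]$ put $m=\lceil Nt\rceil\in[N]$: then $\big||S_j\cap[m]|-N_jt\big|\leq\eps N/3+N_j\,|\lceil Nt\rceil-Nt|/N\leq\eps N/3+1\leq\eps N$ for $N$ large, and taking the supremum over $t$ gives exactly~\eqref{m30.2}.

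\textbf{Main obstacle.} The only genuinely non-routine ingredient is the structural step: recognizing that conditions~(a)--(d) rigidly pin $R$ down to an ordered partition of the labels with strictly decreasing labels along each ray. Once that is in place, part~(i) is Stirling's formula and part~(ii) is a standard concentration inequality followed by a union bound. The one thing to monitor throughout is uniformity over the partition sizes $N_1,\dots,N_K$ (including degenerate or near-degenerate choices), which is automatic here because neither Stirling's error term nor Hoeffding's constant depends on them.
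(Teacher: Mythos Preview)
Your proof is correct and matches the paper's approach closely. For part~(i) you make the same structural identification $\calN=N!/(N_1!\cdots N_K!)$ followed by Stirling's formula, with the added benefit that you spell out why conditions (a)--(d) force the occupied segment of each $\calV_j$ to be initial and the labels along it to be decreasing; the paper simply asserts the multinomial count. For part~(ii) the paper argues in one sentence via the law of large numbers (through a balls-in-boxes time-reversal picture), whereas you give an explicit Hoeffding bound for the hypergeometric count $|S_j\cap[m]|$ followed by a union bound over $m\in[N]$; your route is more quantitative and makes the uniformity in $(N_1,\dots,N_K)$ transparent, while the paper's is shorter but leaves that uniformity implicit.
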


\begin{proof}
(i)
The sets $R([N])\cap \calV_j$, $j\in [K]$, form a  partition of $[N]$ into $K$ ordered subsets of cardinalities $N_1, N_2, \dots , N_K$. Hence, the number $\calN$
of functions $R$ satisfying the  conditions in part (i) of the lemma is 
\begin{align}\label{f5.4}
\calN
= \calN( N; N_1, N_2, \dots , N_K)
=\binom{N}{N_1, N_2, \dots , N_K}
= \frac {N!}{N_1! N_2! \dots N_K!}.
\end{align}
Recall the Stirling formula, 
$\log n! = n \log n - n + O(\log n)$.
It follows from this formula and \eqref{f5.4} that, for a fixed $K$, 
\begin{align*}
\log \calN &= N \log N - N + O(\log N)
- \sum_{j=1}^K ( N_j \log N_j - N_j + O(\log N_j))\\
&= N \log N 
- \sum_{j=1}^K N_j \log N_j + O(\log N) .
\end{align*}
Our assumptions imply that $|N - n a |\leq KK_1$.
Hence for any fixed $K$, $K_1$ and a real number $\zeta>0$, uniformly in $a\in[0,\zeta]$ and $a_j$'s, when $n\to \infty$,
\begin{align*}
\log \calN 
&= N \log N 
-\sum_{j=1}^K N_j \log N_j + O(\log N)\\
&= na \log (na) 
- \sum_{j=1}^K n a_j \log (n a_j) + O(\log n) \nonumber \\
&= n \left(a  \log a
- \sum_{j=1}^K  a_j  \log a_j\right) + O(\log n). \nonumber 
\end{align*}


(ii) This part follows easily from the law of large numbers. More precisely, consider a uniformly random arrangement of $N$ labeled balls in $K$ labeled boxes such that box $j$ has $N_j$ balls. Sequentially remove balls labeled $N, N-1, \ldots$ at unit intervals of time. If we now reverse time, the law of the process $\left(|R([ \lceil N t\rceil ])\cap \calV_j|, t\ge 0,\; j=1,2,\ldots, K\right)$ is identical to that of the process of the number of balls in each box at time $t$. 

\end{proof}

\section{Metric graphs and floodings}\label{subdiv}

\subsection{Metric graphs}

We start this section with a definition of a metric graph. Let $\calG=(\calV,\calE)$ be a finite connected simple graph. A metric graph corresponding to $\calG$ is a pair $(\calG,\ell)$ where $\ell: \calE \rightarrow (0, \infty)$ assigns a positive length to each edge of $\calG$. It is intuitive to visualize the metric graph by replacing each edge $e$ by an interval $[0, \ell(e)]$.  More precisely, consider an arbitrary direction for each edge. That is, in an arbitrary manner replace every edge $e=\{u,v\} \in \calE$ by a vector $e=(u,v)$ where, we say, $u$ is the initial vertex of the edge and $v$ is the terminal vertex. This changes the meaning of $\calE$, but by an abuse of notation we continue to use $\calE$ for this set of directed edges. Note that, if $(u,v)\in \calE$ then $(v,u) \notin \calE$. We say, vertices $u$ and $v$ share an edge if either $(u,v)$ or $(v,u)$ is in $\calE$. 

Now, consider the following set 
\[
S^*= \bigcup_{e \in \calE} \left\{ \left\{ e \right\} \times [0, \ell(e)] \right\}.
\] 
The vertex set $\calV$ can be recovered from $S^*$ via the following equivalence relation. For $e,g \in \calE$ and $x\in [0, \ell(e)]$ and $y \in [0, \ell(g)]$, we define $(e,x) \sim (g,y)$ if, either $e=g$ and $x=y$, or $e\neq g$ but share a vertex, and $x$ and $y$ represent the same vertex. This can happen in the following four ways. 
\begin{enumerate}[(i)] 
\item $e=(u,v), g=(u,w)$ and $x=0=y$.
\item $e=(u,v), g=(w,u)$ and $x=0, y=\ell(g)$.
\item $e=(u,v), g=(w,v)$ and $x=\ell(e), y=\ell(g)$.
\item $e=(u,v), g=(v,w)$ and $x=\ell(e), y=0$.
\end{enumerate}
It is not hard to see that $\sim$ is an equivalence relation on $S^*$. Consider the quotient set $S=S^*/\sim$. We will continue to represent points in $S$ by any representative element $(e,x)$ in $S^*$ from its equivalent class. Then, the set $\calV$ of vertices are precisely those points $(e,x)\in S$ such that $x\in \{0, \ell(e)\}$. Let $\bv(e,x)$ denote the equivalence class of $(e,x)\in S$.  We call $S$ a real metric graph.  

The notion of degree can now be extended to points in $S$. By definition, any point $(e,x)$ for $0 < x < \ell(e)$ has degree $2$. Any other point will correspond to a vertex $v \in \calV$. The degree of that point will be the degree of $v$ in the graph $\calG$. 

There is a natural graph metric on the set $\calV$ obtained from the metric graph $(\calG, \ell)$. To wit, consider two vertices $u,v$ in $\calV$. If $u=v$, the distance is zero. Otherwise, consider a path in $\calG$ from $u$ to $v$ in $k$ steps for some $k\ge 1$. This is a finite sequence of vertices $\{w_0, w_1, \ldots, w_k\} \subseteq \calV$ such that $w_0=u$, $w_k=v$ and each successive pair $w_i, w_{i+1}$ share an edge $e_i$, for $i=0,1,\ldots, k-1$. The length of this path is then defined to be $\sum_{i=0}^{k-1} \ell(e_i)$. The distance between $u$ and $v$, denoted by $\dist(u,v)$ is the infimum of the length of path taken over all paths of any number of steps connecting $u$ and $v$. We will now extend this metric to $S$. 

For any pair of points $(e,x)$ and $(g,y)$ the distance between them can be defined as follows. If $e=g$, then 
\[
\dist\left( (e,x), (g,y)\right)= \abs{x-y}. 
\]
Now, suppose $e\neq g$. Let $u_1=(e,0)$, $u_2=(e, \ell(e))$, $v_1=(g,0)$, $v_2=(g, \ell(g))$.
Then, $\dist\left( (e,x), (g,y)\right)$ is the minimum of the following four terms: (i) $x+\dist(u_1,v_1)+y$, (ii) $x + \dist(u_1, v_2)+  \ell(g)-y$, (iii) $\ell(e)-x + \dist(u_2, v_1) + y$, (iv) $\ell(e)-x + \dist(u_2, v_2) + \ell(g)-y$. 

We leave the verification of the metric property of $\dist(\,\cdot\,,\,\cdot\,)$ to the reader. The metric gives rise to the usual topology generated by open balls, and associated concepts, for example, that of the boundary of a set. The boundary of a set $A$ will be denoted in the usual way by $\prt A$.

\subsection{Subdivision graphs}\label{f24.1}

We will define a subdivision graph $\calG_n$ of a (metric) graph $\calG$, for $n\geq 1$.  
The graph $\calG_n$ is obtained by replacing every edge $e$ of $\calG$ with a path (linear) graph with $\lceil n \ell(e)\rceil$ edges. More precisely, if the endpoints of $e$ are $w_1$ and $w_2$ then we add vertices $v^e_1, v^e_2, \dots, v^e_{\lceil n \ell(e)\rceil-1}$ to the vertex set, we identify $v^e_0$ with $w_1$ and $v^e_{\lceil n \ell(e)\rceil}$ with $w_2$ and we add edges so that
$w_1= v^e_0 \lra v^e_1\lra \dots \lra v^e_{\lceil n \ell(e)\rceil} =w_2$.
We assume that the sets of extra vertices $\{ v^e_1, \dots, v^e_{\lceil n \ell(e)\rceil-1}\}$ are disjoint for distinct $e$. 
The vertex set of $\calG_n$ will be denoted $\calV_n$.
If an edge $g$ of $\calG_n$ has been obtained by subdividing an edge $e$ of $\calG$ then we give $g$ the length $\ell(g) = \ell(e) / \lceil n \ell(e) \rceil$.

It is easy to see that the metric graph $S$ associated with a graph $\calG$ is isometric (as a metric space) with the metric graph corresponding to any of the  subdivision graphs $\calG_n$ of $\calG$.

\subsection{Normal vectors}

For an edge $e $, we will define abstract ``unit vectors'' $ e^+$ and $e^-$.  
We will write $\sign\left(e^+\right) = 1$ and 
$\sign\left(e^-\right) = -1$.
We will use this concept only in the expressions of the form $(g,x) + t e^+$ or $(g,x) + t e^-$, where $(g,x) = (e,s)$ for some $0\leq s \leq \ell(e)$. Hence, all we need to do is to give a meaning to these expressions. If $\bw = e^+$ or $\bw = e^-$ then we declare that $(g,x) + t \bw = \left(e, s+\sign\left(\bw\right)t\right) $ for all $t$ such that $0 \leq s+\sign\left(\bw\right)t \leq \ell(e)$.

For any $(g,x)$,
we say that $e^+$ is a vector anchored at $(g,x)$ if $(g,x) = (e,s)$ for some $0\leq s < \ell(e)$.
We say that $e^-$ is  anchored at $(g,x)$ if $(g,x) = (e,s)$ for some $0< s \leq \ell(e)$. 

For any closed set $A$ with finitely many connected components and $(g,x)\in \prt A$, we say that $\bw$ is an outward normal vector to $A$ at $(g,x)$ if $\bw$ is a vector anchored at $(g,x)$
 and there exists $t_1>0$ such that $(g,x) + t \bw \notin A$ for  all $t\in(0, t_1)$.

\subsection{Floodings}

\begin{definition}\label{m5.1}
We  define \textit{flooding}  as a process $\left( f(t), \; t\ge 0   \right)$ of increasing closed subsets of $S$ such that $f(t)$ has a finite number of connected components for every $t\geq 0$. The process $f$ is defined as a piecewise linearly growing set in $S$. More precisely, for $f$ to be a flooding, there must exist a finite sequence of times $t_0=0< t_1 < t_2 <\dots < t_\kf < \infty$  such that the following conditions hold for $k=1,2,\ldots, \kf+1$. 
\begin{enumerate}[(i)]
\item  If  $f(t_{k-1})=S$ then $f(t)=S$ for all $t \ge t_{k-1}$ and $\kf = k-1$, i.e., $t_\kf = t_{k-1}$.

\item 
If  $f(t_{k-1})\neq S$ then $f(t_{k-1})$
is a finite union of closed connected sets. 
 Let $\left\{ \bw_j^k, j=1,2,\ldots, m_k \right\}$ denote the set of all outward normal vectors at all  points in $\prt f(t_{k-1})$
(so that $m_k$ denotes the total number of outward normal vectors). 
Every vector $\bw_j^k$ can be uniquely represented as $\bw_j^k = (e_j^k)^+$ or $\bw_j^k = (e_j^k)^-$ for some edge $e_j^k$.
Note that at least one of the representations of the point in $\prt f(t_{k-1})$ associated with $\bw_j^k$ must have the form $(e_j^k, r_j^k)$ for some $r_j^k \in[0, \ell(e_j^k)]$. We may have $(e_{j_1}^k, r_{j_1}^k) = (e_{j_2}^k, r_{j_2}^k)$ for some $j_1\ne j_2$.
\item For $t_{k-1}< t_\kf$, there exists a vector
$\left( z^k_{j}, \; 1\le j\le m_k   \right)$ in $(m_k-1)$-dimensional unit simplex. In other words, every $z^k_{j}$ is nonnegative and 
\begin{align}\label{f5.3}
\sum_{1\leq j \leq m_k} z^k_j = 1.
\end{align}

\item The time $t_k$ is the infimum of $t > t_{k-1}$ such that  for some  $j$ we have 
\begin{align*}
 r_j^k + \sign( \bw_j^k)z_j^k(t-t_{k-1}) \notin [0, \ell(e_j^k)],
\end{align*}
or, for some $j_1\ne j_2$ we have $e^k_{j_1} = e^k_{j_2}$ and 
\begin{align*}
r_{j_1}^k + \sign( \bw_{j_1}^k)z_{j_1}^k(t-t_{k-1})
=r_{j_2}^k + \sign( \bw_{j_2}^k)z_{j_2}^k(t-t_{k-1}).
\end{align*}

\item 
For $t\in[t_{k-1}, t_k]$, let $x_j^k(t) =  (e^k_j, r_j^k) +  (t-t_{k-1}) z^k_{j}  \bw _j^k$.
For $t\in[t_{k-1}, t_k]$, the flooding must satisfy
\eq\label{eq:linear_rate}
f(t) = f(t_{k-1}) \cup 
\left\{x_j^k(s):  1\le j \le m_k,\, 0\le s \le t-t_{k-1} \right\}.
\en

\end{enumerate}

\end{definition}

At a linear rate the set $f(\cdot)$ absorbs points on the boundary until it hits a vertex (which potentially alters the size of the set of outer normals) or an edge is absorbed completely by exhausting it from both sides (which reduces the boundary). Since $z^k_j$'s are always chosen from the simplex, the total rate of absorption is always one. Let $\zeta = \sum_{e\in \calE} \ell(e)$ be the total length of all edges of $S$. Hence, $t_\kf=\zeta<\infty$,  $f(t_\kf)=S$ and the process gets absorbed in this state for all subsequent times.  
In other words, condition \eqref{f5.3} agrees with the assumption that $\zeta = \inf\{t\geq 0: f(t) = S\}$.

Note that  the total number of all the outward normals vectors of all the boundary points of $f(t)$ is equal to $m_k$ for all $t_{k-1} \le t < t_k$. Informally, for $t\in(0, t_k - t_{k-1})$, we will refer to any set of the form  $ \left\{ (e^k_j, r_j^k) +  s z^k_{j}  \bw _j^k,\; 0\le s \le t \right\}$ as an arm of the flooding. 

We will write $z^k_j = \dot x^k_j(t)$ for $t\in(t_{k-1}, t_k)$. This is a shorthand for saying that $z^k_j$ is the derivative of $\dist(x^k_j(t_{k-1}), x^k_j(t))$ with respect to time $t$.


For any $M\geq 1$, we will say that $\left( f(t), \; t\ge 0    \right)$ is a flooding with $M$ \textit{sources} if $f(0)$ is a set of $M$ distinct points. The family of all floodings with $M$  sources will be denoted $\calF^M$. We will write $\calF^{\leq M}=\bigcup_{1\leq K \leq M} \calF^K$.

We will define a metric for the family of floodings of a given graph $\calG$.

We equip the family of non-empty compact subsets of $S$ with the usual Hausdorff distance, denoted $\dist_H$. It is easy to see that every flooding $f$ is Lipschitz with the Lipschitz constant 1, i.e., $\dist_H(f(s),f(t)) \leq t-s$ for all $0\leq s \leq t \leq \zeta$.
We define the distance between floodings $f$ and $\wt f$ by 
$\dist(f, \wt f) = \sup \{\dist_H(f(t), \wt f(t)): t\in[0,\zeta]\}$.

We will sketch a proof that $\calF^{\leq M}$ is compact for every $M$.
First, it is easy to see that the family of subsets of $S$ that have at most $M$ elements is compact in the topology induced by $\dist_H$ because $S$ is compact. Hence, for any sequence $f_j$ of floodings in $\calF^{\leq M}$, there is a subsequence $f_{j_i}$ such that $f_{j_i}(0)$ converges to a limit and the limit is a set with at most $M$ points.
The family of compact subsets of $S$ is compact in the metric $\dist_H$. Hence, for every rational number $t\in[0,\zeta]$, we can find a subsequence $f_{j_i}$ of $f_j$, such that $f_{j_i}(t)$ converges to a compact subset of $S$. By the diagonal method, we can find a single subsequence $f_{j_i}$ of $f_i$ such that $f_{j_i}(t)$ converges to a compact subset of $S$ for every rational $t\in[0,\zeta]$. By the Lipschitz property of floodings, the last claim holds for all real $t\in[0,\zeta]$. 

For any flooding $f$ we have $\kf\leq \n_\calG:= |\calV|+|\calE|$, i.e., $\kf$ is bounded by the number of vertices plus the number of edges of $\calG$. This is because at every time $t_k$, $1\leq k\leq \kf$, the flooding reaches at least one of the vertices for the first time or two arms of a flooding  meet in the interior of an edge. It is easy to see that each vertex and each edge can play this role at most once.

Let $t^i_0=0< t^i_1 < t^i_2 <\dots < t^i_{{\bf k}(f_{j_i})} =\zeta$ be defined relative to the flooding $f_{j_i}$. Passing to a subsequence, if necessary, we may assume that ${\bf k}(f_{j_i})$ are  equal to each other for all $i$, and a limit $\lim_{i\to \infty}t^i_k = t^\infty_k \in[0,\zeta] $ exists for every $k=1,\dots, {\bf k}(f_{j_i})$. It is not necessarily true that $t^\infty_j \ne t^\infty_k$ for $j\ne k$.

For every $\eps >0$ and $k>0$ there exists $i_1$ such that for all $i>i_1$ and $j$, $t^i_j \notin [t^\infty_{k-1} + \eps, t^\infty_k - \eps]$ (the claim holds trivially if the interval is empty). Hence, all floodings $f_{j_i}$, $i>i_1$, are linear on $[t^\infty_{k-1} + \eps, t^\infty_k - \eps]$ in the sense of the Definition \ref{m5.1} (v). This implies that the limiting function, say, $f_\infty$, is also linear on this interval. The claim can be extended to the interval $[t^\infty_{k-1} , t^\infty_k ]$ because $\eps>0$ can be taken arbitrarily small and $f_\infty$ is Lipschitz with the Lipschitz constant 1 as a limit of Lipschitz functions.
This completes the proof that  $\calF^{\leq M}$ is compact.


In the following, we will use the convention that $0 \log 0 = 0$.
For $f\in \calF^M$ let
\begin{align}\label{f5.1}
\beta(f) & = \sum_{k=1}^{\kf}\left( (t_k - t_{k-1}) \log (t_k - t_{k-1})
- \sum_{1\leq j \leq m_k} z^k_j(t_k - t_{k-1})
 \log \left( z^k_j(t_k - t_{k-1})\right)\right) \\
& =
-\sum_{k=1}^{\kf}\left( (t_k - t_{k-1}) 
 \sum_{1\leq j \leq m_k} z^k_j \log z^k_j\right). \label{f25.1}
\end{align}


\begin{lemma}\label{lem:beta_cont}
The function $\beta(f)$ is continuous on the set of floodings of $\calG$.
\end{lemma}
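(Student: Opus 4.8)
The plan is to prove sequential continuity: whenever $f_j\to f$ in the flooding metric $\dist$, then $\beta(f_j)\to\beta(f)$. Since it suffices to extract from an arbitrary subsequence a further subsequence along which $\beta(f_j)\to\beta(f)$, we may pass to subsequences freely. Arguing exactly as in the compactness proof for $\calF^{\le M}$ given above, after one such passage we may assume that ${\bf k}(f_j)=K$ is independent of $j$, that the breakpoints obey $t^j_k\to t^\infty_k$ for $0\le k\le K$, and that $f$ is linear on each interval $[t^\infty_{k-1},t^\infty_k]$ in the sense of Definition~\ref{m5.1}(v). Using the bound ${\bf k}\le\n_\calG$ together with the structure of floodings (which likewise bounds the number of outward normal vectors present at each phase), after a further passage we may assume that the number $m_k$ of outward normals of $f_j(t^j_{k-1})$ is independent of $j$ and bounded by a constant $m_\calG$ depending only on $\calG$, and — since the relevant simplex then has fixed dimension and is compact — that the rate vectors converge, $z^{k,j}\to\hat z^k$. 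Writing $H(z)=-\sum_i z_i\log z_i$ for the simplex entropy, which is continuous and satisfies $0\le H\le\log m_\calG$, formula~\eqref{f25.1} becomes $\beta(f_j)=\sum_{k=1}^K (t^j_k-t^j_{k-1})\,H(z^{k,j})$; and, since $\beta$ is unchanged under refining the partition, $\beta(f)=\sum_{k:\,t^\infty_{k-1}<t^\infty_k}(t^\infty_k-t^\infty_{k-1})\,H(w^k)$, where $w^k$ denotes the rate vector of $f$ on $[t^\infty_{k-1},t^\infty_k]$.

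For an index $k$ with $t^\infty_{k-1}=t^\infty_k$ we have $t^j_k-t^j_{k-1}\to0$, so the corresponding summand is squeezed between $0$ and $(t^j_k-t^j_{k-1})\log m_\calG$ and tends to $0$; such indices may be dropped.

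The crux is an index $k$ with $t^\infty_{k-1}<t^\infty_k$. For $\eps\in(0,\tfrac12(t^\infty_k-t^\infty_{k-1}))$ set $I_\eps=[t^\infty_{k-1}+\eps,\,t^\infty_k-\eps]$; then for large $j$ no breakpoint of $f_j$ lies in $I_\eps$, so $f_j$ is linear there with rate vector $z^{k,j}$. I claim $H(z^{k,j})\to H(w^k)$. By continuity of $H$ on the fixed-dimensional simplex, $H(z^{k,j})\to H(\hat z^k)$, so it is enough to show that $\hat z^k$ and $w^k$ have the same positive entries counted with multiplicity (zero entries are irrelevant, as $x\log x\to0$ when $x\to0^+$). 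This I would establish edge by edge. On a fixed edge $e$ the boundary points of $f_j(s)\cap e$ interior to $e$ move linearly in $s\in I_\eps$ with signed speeds $\pm z^{k,j}_i$. Because $f$ is linear on $[t^\infty_{k-1},t^\infty_k]$, no gap of $f$ on $e$ closes in the open interval, so every such gap has length bounded below on $I_\eps$; hence by uniform $\dist_H$-convergence it is shadowed, for large $j$, by a gap of $f_j$ of comparable length, forcing every positive-speed arm of $f$ on $e$ to be shadowed by an arm of $f_j$ whose linear tip trajectory stays $o(1)$-close to it on all of $I_\eps$, hence has speed converging to it. Conversely, fix $0<c<\min\{\hat z^k_i:\hat z^k_i>0\}$ and take an arm of $f_j$ of speed $\ge c$: the gap of $f_j$ on its outward side cannot shrink to length $o(1)$ inside $I_\eps$ — neither by reaching a vertex nor by meeting another arm — as either would be a breakpoint of $f_j$ inside $I_\eps$; so that gap persists in the limit as a gap of $f$, and the arm is shadowed by an arm of $f$ of the same speed. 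Thus the positive-speed arms of $f_j$ are, for large $j$, in speed-matching bijection with the positive-speed arms of $f$, while the remaining arms of $f_j$ have speed tending to $0$ and contribute $o(1)$ to $H(z^{k,j})$. This gives $H(z^{k,j})\to H(w^k)$; combined with $t^j_k-t^j_{k-1}\to t^\infty_k-t^\infty_{k-1}$, the $k$-th summand of $\beta(f_j)$ converges to $(t^\infty_k-t^\infty_{k-1})\,H(w^k)$, and summing over the nondegenerate $k$ yields $\beta(f_j)\to\beta(f)$.

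The main obstacle is precisely the shadowing carried out in the preceding paragraph: a priori $f_j$ may carry more arms than $f$ (from short-lived extra components or gaps) or arms arranged differently, so one must show that on a time interval kept at distance $\eps$ from the breakpoints of $f$ the arm configuration of $f_j$ converges to that of $f$ up to arms whose speed tends to $0$, and that no two positive-speed arms of $f_j$ collide inside the interval. Verifying this requires a careful — but entirely elementary — bookkeeping of how the components and gaps of $f_j$ on a fixed edge evolve between consecutive breakpoints, repeatedly using that any such collision or any arm reaching a vertex would, by definition, be a breakpoint of $f_j$. Everything else is routine: continuity of the simplex entropy $H$, the limit $x\log x\to0$ as $x\to0^+$, and the stability of finite sums and products under limits.
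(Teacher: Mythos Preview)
Your approach is correct and genuinely different from the paper's. You prove sequential continuity by passing to subsequences so that the combinatorial data (number of phases, number of arms per phase) stabilizes, and then argue that on each nondegenerate limit interval the positive-speed arms of $f_j$ are in bijection with those of $f$, so the entropies match. The paper instead gives a direct $\eps$--$\delta$ argument: given two floodings $f,\wt f$, it takes the common refinement $\{s_k\}$ of their breakpoints and observes that, in the representation \eqref{f5.1}, the terms $(s_k-s_{k-1})\log(s_k-s_{k-1})$ cancel in the difference $\beta(f)-\beta(\wt f)$, leaving a finite sum of terms $a\log a-b\log b$ where $a=y^k_j(s_k-s_{k-1})$ and $b=\wt y^k_j(s_k-s_{k-1})$ are the \emph{lengths} traversed by the $j$-th arm on $[s_{k-1},s_k]$. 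These lengths are then shown to satisfy $|a-b|\le 20\delta$ by an explicit edge-by-edge comparison of the occupied intervals (the paper works out the case of an edge containing no sources and leaves the remaining case to the reader). The paper's trick of working with arm-lengths rather than with rates and times separately is what allows it to bypass your shadowing/bijection step entirely; conversely, your approach makes transparent why degenerate limit intervals cost nothing (bounded entropy times vanishing length) and localizes all the delicacy into a single geometric matching lemma. Both arguments leave comparable amounts of bookkeeping (near vertices, near sources, when an edge carries several arms) to the reader.
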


\begin{proof}
We order the family of all ``unit vectors'' $e^+$ and $e^-$, for all $e\in\calE$, in an arbitrary way. There are $2|\calE|$ elements in this family. We can change the sums $\sum_{1\leq j \leq m_k}$ to $\sum_{1\leq j \leq 2|\calE|}$ in \eqref{f5.1}--\eqref{f25.1}, with the understanding that for the extra $j$ in the sum, $z^k_j = 0$, and, therefore, the corresponding term in the sum is equal to 0. The point of the new convention is that the range of the summation index does not depend on the flooding $f$.

Consider two floodings $f$ and $\wt f$.
The objects related to $\wt f$ will be denoted $\wt t_k, \wt z^k_j$, etc.
Let $s_0=0< s_1 < s_2 <\dots < s_n < \infty$ be the ordering of all elements of $\{t_0, t_1, \dots , t_\kf\} \cup \{\wt t_0,\wt t_1, \dots ,\wt  t_{{\bf k}(\wt f)}\} $. If $(s_{i-1}, s_i) \subset (t_{k-1}, t_k)$ then we let $y^i_j = z^k_j$. If $(s_{i-1}, s_i) \subset (\wt t_{k-1},\wt t_k)$ then we let $\wt y^i_j =\wt z^k_j$. Then it follows from \eqref{f25.1} that
\begin{align*}
\beta(f) & = 
-\sum_{k=1}^{n}\left( (s_k - s_{k-1}) 
 \sum_{1\leq j \leq 2|\calE|} y^k_j \log y^k_j\right),  \\
\beta(\wt f) & = 
-\sum_{k=1}^{n}\left( (s_k - s_{k-1}) 
 \sum_{1\leq j \leq 2|\calE|} \wt y^k_j \log \wt y^k_j\right).
\end{align*}
In view of the  equality of \eqref{f5.1} and \eqref{f25.1},
\begin{align*}
\beta(f) & = \sum_{k=1}^{n}\left( (s_k - s_{k-1}) \log (s_k - s_{k-1})
- \sum_{1\leq j \leq 2|\calE|} y^k_j(s_k - s_{k-1})
 \log \left( y^k_j(s_k - s_{k-1})\right)\right), \\
\beta(\wt f) & = \sum_{k=1}^{n}\left( (s_k - s_{k-1}) \log (s_k - s_{k-1})
- \sum_{1\leq j \leq 2|\calE|} \wt y^k_j(s_k - s_{k-1})
 \log \left( \wt y^k_j(s_k - s_{k-1})\right)\right).
\end{align*}
Hence,
\begin{align}\label{f25.3}
&\beta(f)-\beta(\wt f) \\
& = -\sum_{k=1}^{n}
\sum_{1\leq j \leq 2|\calE|} \left( y^k_j(s_k - s_{k-1})
 \log \left( y^k_j(s_k - s_{k-1})\right)
- \wt y^k_j(s_k - s_{k-1})
 \log \left( \wt y^k_j(s_k - s_{k-1})\right)\right).\nonumber
\end{align}

Since $\kf \leq \n_\calG$ for every $f$, we have $n \leq 2 \n_\calG$.
Recall that $y^k_j \leq 1$ and $s_k - s_{k-1} \leq \zeta$. Hence, $y^k_j(s_k - s_{k-1}) \leq \zeta$ and $\wt y^k_j(s_k - s_{k-1}) \leq \zeta$.

Consider an arbitrarily small $\eps>0$.
Let $\eps_1>0$ be such that $4 \n_\calG |\calE| \eps_1 = \eps/2$.
Let $\delta>0$ be so small that if $b_1,b_2\in [0,\zeta]$ and $|b_1-b_2| < 20\delta$ then $|b_1 \log b_1 - b_2 \log b_2| \leq \eps_1$.

Now suppose that $\dist(f, \wt f) < \delta$. 
Consider an interval $[s_{k-1}, s_k]$ and the $j$-th outer normal vector.
Let $\Delta = y^k_j(s_k - s_{k-1})$ and $\wt \Delta =\wt y^k_j(s_k - s_{k-1})$. Without loss of generality, suppose that the $j$-th vector is $e^+$, corresponding to an edge $e$.
We will assume that the floodings $f$ and $\wt f$ do not have sources in $e$. The  case
when there are sources in $e$  is more complicated but does not pose any new
conceptual challenges; we leave it to the reader.

We will write $(e, (u_1, u_2])=\bigcup_{u\in(u_1,u_2]} \{(e,u)\}$ and similarly for other intervals (open or closed).
For some $x_1, x_2, x_3,\wt x_1,\wt x_2, \wt x_3\in[0,\ell(e)]$ such that $x_3 \leq x_2$ and $\wt x_3 \leq \wt x_2$, we have
\begin{align*}
(e,(0, \ell(e))) \cap  f(s_{k-1})
&= (e, (0,x_1]) \cup (e, [x_2,\ell(e))),\\
(e,(0, \ell(e))) \cap (f(s_k) \setminus f(s_{k-1}))
&= (e, (x_1, x_1+\Delta]) 
\cup (e, [x_3,x_2)),\\
(e,(0, \ell(e))) \cap \wt f(s_{k-1})
&= (e, (0,\wt x_1]) \cup (e, [\wt x_2,\ell(e))),\\
(e,(0, \ell(e))) \cap (f(s_k) \setminus f(s_{k-1}))
&= (e, (\wt x_1, \wt x_1+\wt\Delta]) 
\cup (e, [\wt x_3,\wt x_2)).
\end{align*}
Note that some, even all, of the eight line segments on the right hand side can be empty. 

Suppose that $|x_1 - \wt x_1| \geq 10 \delta$. Consider the case  $x_1 - \wt x_1 \geq 10 \delta$. Since $\dist(f, \wt f) < \delta$, we must have $\wt x_2 \leq x_1 - 8 \delta$. 
Let $t$ be the largest time such that for some 
$x_4, x_5, \wt x_4\in[0,\ell(e)]$,
\begin{align*}
(e,(0, \ell(e))) \cap  f(t)
&= (e, (0,x_4]) \cup (e, [x_5,\ell(e))),\\
(e,(0, \ell(e))) \cap \wt f(t)
&= (e, (0,\wt x_4]) \cup (e, [x_4,\ell(e))).
\end{align*}
Note that $t< s_{k-1}$, $x_5 \geq x_2$, $\wt x_4 \leq \wt x_1$ and $\wt x_2 \leq x_4 \leq x_1$.
Since $\wt x_2 \leq x_1 - 8 \delta$,
we have $x_4 \geq \wt x_2 + 4\delta$ or $x_4 \leq x_1 -4\delta$.

Suppose that $x_4 \geq \wt x_2 + 4\delta$ and let $x_6 =(\wt x_2 + x_4)/2$. Then $(e, x_6) \in f(t)$ but $(e, (x_6-2\delta, x_6+2\delta)) \cap \wt f(t) =\emptyset$. This contradicts the assumption that   $\dist(f, \wt f) < \delta$.

Consider the other case, namely, $x_4 \leq x_1 -4\delta$ and let $x_7 =(x_4+x_1)/2$. Then $(e, x_7) \in \wt f(t)$ but $(e, (x_7-2\delta, x_7+2\delta)) \cap  f(t) =\emptyset$. This contradicts the assumption that   $\dist(f, \wt f) < \delta$.

We conclude that $x_1 - \wt x_1 \geq 10 \delta$ cannot hold. The case $\wt x_1 -  x_1 \geq 10 \delta$ can be eliminated in a similar manner. This implies that $|x_1 - \wt x_1| < 10 \delta$. An analogous argument shows that
$|(x_1 +\Delta)- (\wt x_1+\wt \Delta)| < 10 \delta$. Recalling the definitions of $\Delta$ and $\wt \Delta$, we obtain
\begin{align*}
\left|y^k_j(s_k - s_{k-1})-\wt y^k_j(s_k - s_{k-1})\right|
= |\Delta - \wt \Delta|\leq 20 \delta.
\end{align*}
It follows that 
\begin{align*}
 &\left|  y^k_j(s_k - s_{k-1})
 \log \left( y^k_j(s_k - s_{k-1})\right)
- \wt y^k_j(s_k - s_{k-1})
 \log \left( \wt y^k_j(s_k - s_{k-1})\right)\right|
\leq \eps_1,
\end{align*}
and, therefore,
\begin{align*}
 &\left|-\sum_{k=1}^{n}
\sum_{1\leq j \leq 2 |\calE|} \left( y^k_j(s_k - s_{k-1})
 \log \left( y^k_j(s_k - s_{k-1})\right)
- \wt y^k_j(s_k - s_{k-1})
 \log \left( \wt y^k_j(s_k - s_{k-1})\right)\right)\right|\\
& \leq n \cdot 2|\calE| \cdot \eps_1
\leq 4 \n_\calG |\calE| \eps_1 = \eps/2.
\end{align*}
This and \eqref{f25.3} imply that $|\beta(f)-\beta(\wt f)| \leq \eps/2$.
\end{proof}


Let
\begin{align*}
\beta^* &= \beta^*(S,M) = \sup\left\{ \beta(f): f \in \calF^{\leq M} \right\},  \\
\calF^M_* &= \left\{f\in \calF^{\leq M}: \beta(f)= \beta^*(S,M)\right\}. 
\end{align*}

Since the set $\calF^{\leq M}$ is compact  and $\beta$ is continuous, $\beta$ attains  its supremum. Simple examples show that $\calF^M_*$ does not have to be a singleton, for example, when $S$ is a circle and $M$ is any positive integer. Theorem \ref{f22.1} (iii) implies that 
$ \beta^*(S,M) = \sup\left\{ \beta(f): f \in \calF^M\right\}$
and $\calF^M_* = \left\{f\in  \calF^M: \beta(f)= \beta^*(S,M)\right\}$ but we cannot take these statements as the definitions of $\beta^*$ and $\calF^M_*$ for technical reasons.

Recall the definition of a subdivision graph from Section \ref{f24.1}.
Consider a graph $\calG=(\calV,\calE)$ and a subdivision graph $\calG_n=(\calV_n,\calE_n)$.
We will associate a flooding $f_L$ of $\calG$ (not $\calG_n$) with any (deterministic) labeling $L$ of $\calV_n$.
First of all, note that a flooding $f$ is totally determined by the finite family $\{f([0, t_k]), 0\leq k \leq \kf\}$. This is because $f$ is continuous and the functions
$x^k_j$ are assumed to be linear on $[t_{k-1},t_k]$ for every $j$ and $k$.

Suppose that $L$ is a labeling of $\calV_n$ with $M$ peaks. 
Let $R=L^{-1}$,   $N_n = |\calV_n|$,
and for $0\leq k \leq N_n-1$, let
\begin{align*}
C_k = R(\{N_n, N_n-1,\dots, N_n-k\}).
\end{align*}
  Let $k_0=0$ and let $0< k_1 <k_2 < \dots < k_\alpha$ be the list of all $k_i>0$ such that at least one of the following conditions holds:

(i) $R(N_n-k_i)$ is a vertex of $\calG$, or 

(ii) $C_{k_i}$
contains all vertices of $\calV_n$ that belong to an edge $e$ of $\calG$ but $C_{k_i-1}$ does not contain some vertex in $e$. 

(iii) The vertex in $C_{k_i} \setminus C_{k_i-1}$ is a peak.

Strictly speaking (i) implies (ii) but we listed the two conditions separately to highlight the heuristic difference between the two cases. An arm of the labeling  reaches a vertex of $\calG$ in case (i). Case (ii) represents the situation when  two arms of the labeling meet in the interior of an edge of $\calG$.

Note that $\alpha$ (i.e., the number of $k_i$'s greater than 0) depends on $\calG,n$ and $L$.

Let $t_{k_i}$ be the total length of all edges of the metric graph $\calG_n$ whose both endpoints belong to $C_{k_i}$.
We define the flooding $f_L$ by declaring that for $0\leq i \leq \alpha$, $f_L([0,t_{k_i}])$ is the 
union of the set of all peaks of $L$ and all edges of $\calG_n$ whose both endpoints belong to $C_{k_i}$.

Let $D_n^L(t) = \calV_n \cap f_L([0,t])$ and let $\ominus$ denote the symmetric set difference. Let $\P^n_M$ denote the distribution of  random labeling of $\calG_n$ conditioned on having exactly $M$ peaks.

\begin{theorem}\label{j22.1}
Consider a graph $\calG$ and an integer $M>0$. For any $\eps>0$ there
exists $n_1$ such that for $n\geq n_1$,
\begin{align}\label{m30.1}
\P_M^n(\beta(f_L) \leq \beta^*(S,M) -\eps) 
 < \exp(-n \eps + 2\log^2n),
\end{align}
and
\begin{align}\label{m31.3}
\P_M^n\left(  
\sup_{0\leq t < \zeta} \left| D_n^L(t) \ominus 
R(\{N, N-1, \dots,  \lceil N(1- t /\zeta)\rceil\})\right|  < \eps n
\right) > 1-\eps.
\end{align}
\end{theorem}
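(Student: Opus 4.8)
\emph{Sketch of the argument.} Both estimates rest on one counting fact. Call a flooding $f$ of $\calG$ \emph{$n$-admissible} if $f=f_L$ for some $M$-peak labeling $L$ of $\calV_n$. A flooding is determined by the sets $f([0,t_k])$ with $k\le\kf$, and for $f=f_L$ each such set is the union of the $M$ peaks with the $\calG_n$-edges whose two endpoints lie in the corresponding cluster, so it is encoded by a subset of $\calV$ together with, for each $e\in\calE$, two integers in $[0,\lceil n\ell(e)\rceil]$; since $\kf\le\n_\calG$, the number of $n$-admissible floodings is at most a fixed power of $n$, say $\exp(c_0\log n)$ with $c_0=c_0(\calG)$. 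The key estimate is that, uniformly over $n$-admissible $f$,
\begin{align*}
  \#\{L:\ L\text{ has }M\text{ peaks},\ f_L=f\}=\exp\!\big(n\,\beta(f)+O(\log n)\big),
\end{align*}
the constants depending only on $\calG$ and $M$. To prove it, split the index range into the $\alpha\le|\calV|+|\calE|+M$ epochs $(k_{i-1},k_i]$ read off from $f$. Conditioning on $f_L=f$ fixes, inside each arm of epoch $i$, the set of $\calG_n$-vertices visited and their relative label order (labels decrease away from the base of the arm), so the only freedom left is the interleaving of the $m_i\le2|\calE|$ decreasing arm-blocks, of sizes $N^{(i)}_1,\dots,N^{(i)}_{m_i}$ summing to $k_i-k_{i-1}$, up to a factor $\exp(O(\log n))$ coming from which interior vertex closes a newly completed edge and from the placement of the $\alpha$ ``event'' vertices. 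Thus the within-epoch count is $\binom{k_i-k_{i-1}}{N^{(i)}_1,\dots,N^{(i)}_{m_i}}$ up to $\exp(O(\log n))$. Since a subdivided piece $g$ of $e$ has length $\ell(e)/\lceil n\ell(e)\rceil$, one has $k_i-k_{i-1}=n(t_i-t_{i-1})+O(\n_\calG)$ and $N^{(i)}_j=n z^i_j(t_i-t_{i-1})+O(\n_\calG)$, so Lemma \ref{lem:multinomialcount}(i) (with $K_1=O(\n_\calG)$, uniformly in $a\in[0,\zeta]$), applied to each epoch, multiplied together and combined with \eqref{f5.1}, yields the displayed identity.

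To prove \eqref{m30.1}, sum this estimate over the $n$-admissible $f$ with $\beta(f)\le\beta^*(S,M)-\eps$: since there are at most $\exp(c_0\log n)$ of them, $\#\{L:\ M\text{ peaks},\ \beta(f_L)\le\beta^*-\eps\}\le\exp(n(\beta^*-\eps)+O(\log n))$. For the denominator it suffices to exhibit one $n$-admissible $f$ with $\beta(f)\ge\beta^*(S,M)-O((\log n)/n)$. The set $\calF^{\le M}$ is compact and $\beta$ continuous on it (Lemma \ref{lem:beta_cont}), so $\calF^M_*\ne\emptyset$; moreover a maximizer must use all $M$ sources, i.e.\ $\calF^M_*\subseteq\calF^M$, because inserting one more source growing a short arm at a small rate $\eps'$ deep in the interior of an edge strictly raises $\beta$ (by $\Theta(\eps'\log(1/\eps'))$, against an $O(\eps')$ rescaling cost for the other rates), contradicting $f\in\calF^{\le M}$ for a flooding with fewer than $M$ sources. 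Fix $f^*\in\calF^M_*$ and build $L_n$ on $\calV_n$ directly: put its $M$ peaks at the $\calG_n$-vertices nearest the sources of $f^*$ and, in each epoch, distribute labels among the arms according to the integers nearest $n z^i_j(t_i-t_{i-1})$. Then $L_n$ has exactly $M$ peaks (no spurious local maxima appear, since labels decrease along each arm) and $\dist(f_{L_n},f^*)=O(1/n)$, so by Lemma \ref{lem:beta_cont} (its proof supplies a modulus of continuity $O(h|\log h|)$) $\beta(f_{L_n})\ge\beta^*(S,M)-O((\log n)/n)$, and the counting estimate at $f=f_{L_n}$ gives $\#\{L:\ M\text{ peaks}\}\ge\exp(n\beta^*(S,M)+O(\log n))$. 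Dividing, $\P^n_M(\beta(f_L)\le\beta^*-\eps)\le\exp(-n\eps+O(\log n))<\exp(-n\eps+2\log^2n)$ for $n\ge n_1$.

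To prove \eqref{m31.3}, condition on $f_L=f$. Inside epoch $i$ the interleaving of the $m_i$ decreasing arm-blocks is conditionally uniform, which is precisely the model of Lemma \ref{lem:multinomialcount}; part (ii) of that lemma, with a union bound over the $m_i\le2|\calE|$ arms, gives: for any prescribed $\eps'>0$ there is $N_*$ so that whenever $k_i-k_{i-1}\ge N_*$, with conditional probability at least $1-2|\calE|\eps'$ the number of labels in each arm among the top $\lceil(k_i-k_{i-1})s\rceil$ of the epoch stays within $\eps'(k_i-k_{i-1})\le\eps' n\zeta$ of its linear value $N^{(i)}_js$, uniformly in $s\in(0,1]$; short epochs ($k_i-k_{i-1}<N_*$) satisfy this trivially since then the deviation is at most the constant $N_*$. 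Intersecting over the $\le\n_\calG+M$ epochs and fixing $\eps'$ small in terms of $\eps,\n_\calG,|\calE|,\zeta$ makes the total failure probability less than $\eps$ and the accumulated deviation less than $\eps n/2$ once $n$ is large. On the good event, for each $t<\zeta$ lying in epoch $i$, both $D^L_n(t)=\calV_n\cap f_L([0,t])$ and $R(\{N_n,\dots,\lceil N_n(1-t/\zeta)\rceil\})=C_{m(t)}$ with $m(t)=nt+O(\n_\calG)$ equal $C_{k_{i-1}}$ together with connected prefixes of the same arms (plus at most $M$ peak vertices): the prefix of arm $j$ has $n z^i_j(t-t_{i-1})+O(\n_\calG)$ vertices in $D^L_n(t)$ by the deterministic linear growth of $f_L$, and $N^{(i)}_j\cdot\frac{m(t)-k_{i-1}}{k_i-k_{i-1}}+O(\eps' n)$ vertices in $C_{m(t)}$ by the tracking just established, and since $t-t_{i-1}=\frac{m(t)-k_{i-1}}{n}+O(\n_\calG/n)$ these counts agree up to $O(\eps' n)+O(\n_\calG)$. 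Summing over the $\le2|\calE|$ arms gives $|D^L_n(t)\ominus C_{m(t)}|<\eps n$ for all $t<\zeta$; as the bound holds conditionally on every $f$, \eqref{m31.3} follows.

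The delicate point is the counting estimate of the first paragraph: one must verify that conditioning on $f_L=f$ genuinely reduces the count to a product of multinomial coefficients --- that the vertex set and label order along each arm are forced, with only $\exp(O(\log n))$-many ``edge-closing'' and ``event-vertex'' choices remaining --- and that the $O(\n_\calG)$ gaps between the integers $k_i-k_{i-1},N^{(i)}_j$ and the continuum quantities $n(t_i-t_{i-1}),nz^i_j(t_i-t_{i-1})$ stay within the uniform scope of Lemma \ref{lem:multinomialcount}(i). The explicit construction of $L_n$ (and the check that it has exactly $M$ peaks) and the bookkeeping translating ``flooding time $t$'' into ``label count $m(t)$'' are then routine.
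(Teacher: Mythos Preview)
Your proof is correct and follows essentially the same route as the paper: the multinomial counting estimate per admissible flooding (the paper's Steps~2--3 and formula~(\ref{a24.1})), the lower bound on the total count via a labeling built from an $f\in\calF^M_*$ (Step~4), and the law-of-large-numbers argument conditioned on $f_L$ for \eqref{m31.3} (Step~1). One small difference worth noting: you supply an explicit in-line argument that a maximizer in $\calF^{\le M}$ must have exactly $M$ sources (via the $\Theta(\eps'\log(1/\eps'))$ entropy gain from an extra small source), whereas the paper defers this point to Theorem~\ref{f22.1} and in Step~4 simply takes $f\in\calF^M_*$ without comment---your patch makes the argument more self-contained, though your use of Lemma~\ref{lem:beta_cont} to transfer $\beta(f^*)$ to $\beta(f_{L_n})$ is a slight detour compared to the paper's direct computation of $\log|\bL^f_n|$ from \eqref{m24.1}.
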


\begin{proof}

{\it Step 1}.
The set 
\begin{align}\label{m31.1}
C_{k_i} \setminus C_{k_{i-1}} = 
R(\{N_n-k_i, N_n-k_i +1, \dots, N_n- k_{i-1}-1\})
\end{align}
consists of a certain number, say, $m_{k_i}$, of distinct ``line segments,'' i.e., $m_{k_i}$  sets of consecutive vertices $(v(k_i,j,1), v(k_i,j,2), \dots,v(k_i,j,U^{k_i}_j))$, $1\leq j \leq m_{k_i}$, 
where $U^{k_i}_j$ denotes the cardinality of the sequence,
$v(k_i,j,1)$ is adjacent to $C_{k_{i-1}}$ and
$v(k_i,j,U^{k_i}_j)$ lies on the boundary of $C_{k_i}$.
It follows from the construction that each sequence $(v(k_i,j,1), v(k_i,j,2), \dots,v(k_i,j,U^{k_i}_j))$ lies on only one edge  of $S$. 
Two of these sequences, corresponding to different values of $(k_i,j)$, may have at most one vertex in common, and if they do then  the common vertex is  the endpoint for each one of the sequences. 

Let us condition the random labeling on having exactly $M$ peaks, on the values of $L(N- \ell)$ for $0\leq \ell \leq k_{i-1}$,
and on the value of the set in \eqref{m31.1}. The conditional distribution of
\begin{align}\label{m31.2}
\left\{\left(L(v(k_i,j,1)), L(v(k_i,j,2)), \dots,L(v(k_i,j,U^{k_i}_j))\right),
1\leq j \leq m_{k_i}\right\},
\end{align}
can be described as follows. First, we randomly (uniformly) divide the family of labels (integers) $N_n-k_i, N_n-k_i +1, \dots, N_n- k_{i-1}-1$, into disjoint subsets of sizes $U^{k_i}_j$, $1\leq j \leq m_{k_i}$. Then we order integers in each subset and then assign them to vertices as in \eqref{m31.2}.
The version of the law of large numbers for the multinomial distribution given in \eqref{m30.2} implies that the values in \eqref{m31.2} decrease in an approximately linear way. We can repeat the same reasoning for each $k_i$, $1\leq i \leq \alpha$. It is straightforward to translate our observations  into the statement in \eqref{m31.3}.

\medskip
{\it Step 2}.
We will find a crude upper bound for the  number of distinct families $\{(v(k_i,j,1),v(k_i,j,U^{k_i}_j)), 1\leq j \leq m_{k_i},1\leq i \leq \alpha\}$ associated with all (deterministic) floodings with $M$ sources. 
The set $\bigcup_{i=1}^\alpha (C_{k_i} \setminus C_{k_i-1})$ contains all vertices of $\calG$, all peak  locations and at most one vertex in the subdivision graph for each edge of $\calG$ (and no other vertices). Hence, $\alpha \leq k_*$, where $k_*$ is the  sum of the numbers of vertices and edges of $\calG$ plus the number of sources, i.e., $M$. 
Since $m_{k_i}$ is bounded by the number of edges of $\calG$, $m_{k_i} \leq k_*$ for all $1\leq i \leq \alpha$, for every (deterministic) labeling $L$.
A crude upper estimate for the number of vertices in $\calV_n$ is $2n\zeta$.
All these remarks imply that the number of distinct families $\{(v(k_i,j,1),v(k_i,j,U^{k_i}_j)), 1\leq j \leq m_{k_i},1\leq i \leq \alpha\}$ is bounded by $(2n\zeta)^{2k_*^2}$. 

\medskip
{\it Step 3}.
Recall the notation related to the construction of $f_L$
presented before the statement of the theorem.
We match indices of arms of $f_L$ with the indices of the sequences
constructed above so that $(v(k_i,j,1), v(k_i,j,2), \dots,v(k_i,j,U^{k_i}_j))$ corresponds to $z^{k_i}_j$. Then for all $k_i$ and $j$,
\begin{align}\label{m28.1}
-1< nz^{k_i}_j (t_{k_i} - t_{k_i-1}) -  U^{k_i}_j\leq 0.
\end{align}

Let 
\begin{align*}
\bL_n=\bL_n((v(k_i,j,1),v(k_i,j,U^{k_i}_j)), 1\leq j \leq m_{k_i}, 1\leq i \leq k_L)
\end{align*}
be the set of all deterministic labelings $L$ of $\calG_n$ which correspond to the family of pairs $(v(k_i,j,1),v(k_i,j,U^{k_i}_j))$, $1\leq j \leq m_{k_i}$, $1\leq i \leq \alpha$, in the manner described above. Let
$U^i =k_i - k_{i-1}= \sum_{ 1\leq j \leq m_{k_i}} U^{k_i}_j$.
We use \eqref{m24.1} and \eqref{m28.1} to see that
\begin{align}\label{a24.1}
\log &|\bL_n((v(k_i,j,1),v(k_i,j,U^{k_i}_j)), 
1\leq j \leq m_{k_i}, 1\leq i \leq \alpha)|\\
&=
\log\prod _{i=1}^{\alpha}
\calN( U^i - m_{k_i}; U^{k_i}_1-1, U^{k_i}_2-1, \dots , U^{k_i}_{m_{k_i}}-1) \nonumber \\
&= n\sum _{i=1}^{\alpha}
  (t_{k_i} - t_{k_i-1})  \log (t_{k_i} - t_{k_i-1})\nonumber  \\
& \quad-  n\sum _{i=1}^{\alpha}
  \sum_{1\leq j \leq m_{k_i}} 
z^{k_i}_j(t_{k_i} - t_{k_i-1})  \log (z^{k_i}_j(t_{k_i} - t_{k_i-1}))
 + O(\log n)\nonumber \\
& = n \beta(f_L) + O(\log n) .\nonumber 
\end{align}

Let $\bM_n$ be the number of deterministic labelings of $\calG_n$ with $M$ peaks. 
Recall from Step 2 that the number of distinct families $\{(v(k_i,j,1),v(k_i,j,U^{k_i}_j)), 1\leq j \leq m_{k_i},1\leq i \leq \alpha\}$ is bounded by $(2n\zeta)^{2k_*^2}$. 
It follows from \eqref{a24.1} that for $b>0$ and large $n$,
\begin{align}\label{ma29.1}
&\P_M^n(\beta(f_L) \leq b) 
= \P_M^n \left(n\beta(f_L) \leq nb \right)\\
&\leq
\frac1 {\bM_n}
\sum_{(v(k_i,j,1),v(k_i,j,U^{k_i}_j)),1\leq j \leq m_{k_i}, 1\leq i \leq \alpha}
|\bL_n|
\bone_{\{ \log|\bL_n| \leq nb + \log^2n\}}(\bL_n) \nonumber \\
&\leq
\frac1 {\bM_n}
(2n\zeta)^{2k_*^2}
\exp(nb + \log^2n).\nonumber
\end{align}

\medskip
{\it Step 4}.
Given a function $f\in \calF^M_*$ we will construct a family $\calL_f^n$ of associated labelings of $\calG_n$. Let $B_k = \calV_n \cap f([0, t_k])$ for $0 \leq k \leq \kf$. Note that the index $k$ in $B_k$ has a different meaning than in  $C_k$. The number of vertices in $C_k$ is $k+1$ while the number of vertices in $B_k$ is typically of order $n$. The set $B_0$ may be empty. If it is non-empty then it contains at most $M$ vertices.

We will say that a labeling $L$ belongs to $\calL_f^n$ if it has exactly $M$ peaks and  $B_k \setminus B_{k-1} = R(\{i, i+1, \dots, i+\ell\})$ for some $i$ and $\ell$, for every $1\leq k \leq \kf$.

For $1\leq k \leq \kf$, the set $B_k \setminus B_{k-1}$
consists of a certain number, say, $m_k$, of distinct ``line segments,'' i.e., $m_k$  sets of consecutive vertices $(v(k,j,1), v(k,j,2), \dots,v(k,j,V^{k}_j))$, $1\leq j \leq m_{k}$, 
where $V^{k}_j$ denotes the cardinality of the sequence,
$v(k,j,1)$ is adjacent to $B_{k-1}$ and
$v(k,j,V^{k}_j)$ lies on the boundary of $B_{k}$.
It follows from the construction that each sequence $(v(k,j,1), v(k,j,2), \dots,v(k,j,V^{k}_j))$ lies on only one edge  of $S$. 
Two  sequences $(v(k,j,1), v(k,j,2), \dots,v(k,j,V^{k}_j))$ corresponding to different values of $(k,j)$ 
 may have at most one vertex in common, and if they do then  the common vertex is  the endpoint for each one of the sequences.  

We match indices of arms of $f$ with the indices of the sequences
constructed above so that $(v(k,j,1), v(k,j,2), \dots,v(k,j,V^{k}_j))$ corresponds to $z^{k}_j$. Then for all $k$ and $j$,
\begin{align}\label{ma29.2}
-1< nz^{k}_j (t_{k} - t_{k-1}) -  V^{k}_j\leq 0.
\end{align}

Let 
\begin{align*}
\bL_n^f=\bL^f_n((v(k,j,1),v(k,j,V^{k}_j)), 1\leq j \leq m_{k}, 1\leq k \leq \kf)
\end{align*}
be the set of all deterministic labelings $L$ of $\calG_n$ which correspond to the family of pairs $(v(k,j,1),v(k,j,V^{k_i}_j))$, $1\leq j \leq m_{k_i}$, $1\leq k \leq \kf$, in the manner described above.
Recall that $\bM_n$ is the number of deterministic labelings of $\calG_n$ with $M$ peaks.  Let
$V^k = \sum_{ 1\leq j \leq m_{k}} V^{k}_j$.
We use \eqref{m24.1}, \eqref{ma29.2} and the assumption that  $f\in \calF^M_*$ to see that
\begin{align*}
\log \bM_n
&\geq
\log |\bL^f_n((v(k,j,1),v(k,j,V^{k}_j)), 
1\leq j \leq m_{k}, 1\leq k \leq \kf)|\\
&=
\log\prod _{k=1}^{\kf}
\calN( V^k - m_{k}; V^{k}_1-1, V^{k}_2-1, \dots , V^{k}_{m_{k}}-1)\\
&= n\sum _{k=1}^{\kf}
  (t_{k} - t_{k-1})  \log (t_{k} - t_{k-1}) \\
& \quad-  n\sum _{k=1}^{\kf}
  \sum_{1\leq k \leq m_{k}} 
z^{k}_j(t_{k} - t_{k-1})  \log (z^{k}_j(t_{k} - t_{k-1}))
 + O(\log n)\\
& = n \beta(f) + O(\log n)
= n \beta^*(S,M) + O(\log n) .
\end{align*}
This and \eqref{ma29.1} imply that for any $\eps>0$ there
exist $c_1$ and $n_1$ such that for $n\geq n_1$,
\begin{align*}
\P_M^n&(\beta(f_L) \leq \beta^*(S,M) -\eps) 
\leq
\frac1 {\bM_n}
(2n\zeta)^{2k_*^2}
\exp(n(\beta^*(S,M) -\eps) + \log^2n)\\
&\leq c_1 n \exp( - n\beta^*(S,M))
(2n\zeta)^{2k_*^2}
\exp(n(\beta^*(S,M) -\eps) + \log^2n)\\
&= c_1 n 
(2n\zeta)^{2k_*^2}
\exp(-n \eps + \log^2n) < \exp(-n \eps + 2\log^2n).
\end{align*}
This proves \eqref{m30.1}.
\end{proof}

\section{Properties of floodings}\label{flood}


It is elementary to see that the set $\calF^M$ can be parametrized so that it becomes a subset of a finite dimensional space. By Lemma \ref{lem:beta_cont} the function $\beta$ defined in \eqref{f5.1} is  continuous over $\calF^M$, however, finding the maximizing $f$ in an effective way does not seem to be easy. 

Suppose that $\calH$ is obtained from $\calG$ by subdividing it with a finite number of vertices. It is not hard to see that the metric graph for $\calH$ can be embedded isometrically in the metric graph for $\calG$. That is, both graphs are represented by the same metric graph $S$, except with different edge labels and the set of vertices  of $\calG$ is a subset of the set of vertices of $\calH$. 

\begin{lemma}\label{f19.2}
For every $M$, $\sup\{\beta(f): f\in \calF^M(\calH)\}= \sup\{\beta(f): f\in \calF^M(\calG)\}$.
\end{lemma}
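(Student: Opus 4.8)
The plan is to prove the two inequalities separately. Note first that $\calG$ and $\calH$ have the \emph{same} associated real metric graph $S$, so in both cases a flooding is a process of increasing closed subsets of $S$, and the only difference between $\calF^M(\calG)$ and $\calF^M(\calH)$ is that condition (iv) of Definition \ref{m5.1} forces a break time whenever an arm reaches a vertex, and $\calH$ has more vertices than $\calG$. I would record at the outset the \emph{refinement invariance} of $\beta$ that is already implicit in the proof of Lemma \ref{lem:beta_cont}: if a partition $t_0<\dots<t_{\kf}$ is refined by inserting extra times while the arm rates $z^k_j$ are left constant on the subintervals, then the value of $\beta$ in \eqref{f5.1}--\eqref{f25.1} does not change, because $\sum (s_k-s_{k-1})=t_{l}-t_{l-1}$ over the subintervals of $(t_{l-1},t_l)$.

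For $\sup\{\beta(f):f\in\calF^M(\calG)\}\le\sup\{\beta(f):f\in\calF^M(\calH)\}$, given $f\in\calF^M(\calG)$ I would view $f$ as a flooding of $\calH$ by adjoining to its list of break times all moments at which an arm passes through a vertex of $\calH$ that is not a vertex of $\calG$. Such a vertex has degree $2$, so no rate need change there and retaining the same rate is an admissible choice; the resulting process then satisfies all of the conditions of Definition \ref{m5.1} relative to $\calH$, has the same sources, and by refinement invariance has the same $\beta$. So the sup over $\calF^M(\calG)$ does not exceed the sup over $\calF^M(\calH)$.

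For the reverse inequality, given $g\in\calF^M(\calH)$ I would construct $f\in\calF^M(\calG)$ with the same sources and $\beta(f)\ge\beta(g)$, by time-averaging the rates of $g$ over the $\calG$-break-intervals. Let $0=\tau_0<\tau_1<\dots<\tau_r=\zeta$ be the times at which some arm of $g$ reaches a vertex of $\calG$ or two arms of $g$ meet. On each open interval $(\tau_{l-1},\tau_l)$ the set of arms of $g$ is constant, each arm stays in the interior of a single edge of $\calG$ and moves monotonically, and the only break times of $g$ strictly inside the interval are passages through the extra vertices of $\calH$, between which the rates, call them $z^{(m)}_j$, are constant. I would set $\bar z_j$ equal to the time-average of the $z^{(m)}_j$ over $[\tau_{l-1},\tau_l]$ and let $f$ grow on $[\tau_{l-1},\tau_l]$ with each arm $j$ moving at the constant rate $\bar z_j$. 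Then $\sum_j\bar z_j=1$ and, crucially, $f$ agrees with $g$ at every $\tau_l$, since $\bar z_j(\tau_l-\tau_{l-1})$ equals the total displacement of arm $j$ of $g$ over that interval.

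It then remains to check that $f$ is a genuine flooding of $\calG$ and that $\beta(f)\ge\beta(g)$. For validity: inside $(\tau_{l-1},\tau_l)$ no arm of $f$ reaches a vertex of $\calG$, because each arm interpolates monotonically between two positions lying in a closed edge and touching an endpoint at most at $\tau_{l-1}$ or $\tau_l$; and no two arms of $f$ meet, because for two arms sharing an edge the signed gap between them is affine on $[\tau_{l-1},\tau_l]$ and nonnegative at both endpoints (it is the gap of $g$ there, which vanishes only at meeting times $\tau_l$), hence nonnegative throughout, vanishing at most at an endpoint. So $\tau_{l-1}<\tau_l$ are consecutive admissible break times and conditions (i)--(v) of Definition \ref{m5.1} hold. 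The inequality $\beta(f)\ge\beta(g)$ follows interval by interval from concavity of $x\mapsto-x\log x$: with $\lambda_m$ the normalized lengths of the subintervals of $[\tau_{l-1},\tau_l]$, Jensen's inequality gives $-\bar z_j\log\bar z_j\ge-\sum_m\lambda_m z^{(m)}_j\log z^{(m)}_j$, and summing over $j$ and multiplying by $\tau_l-\tau_{l-1}$ shows the contribution of $f$ to \eqref{f25.1} over $[\tau_{l-1},\tau_l]$ is at least that of $g$. I expect this validity check — verifying that averaging the rates cannot produce a spurious collision of arms or make an arm hit a vertex of $\calG$ early — to be the only delicate point, and as indicated it reduces to the affineness and monotonicity of arm positions on each interval $[\tau_{l-1},\tau_l]$.
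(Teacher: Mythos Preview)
Your argument is correct and takes a genuinely different route from the paper's. The paper does not argue directly with $\beta$ at all: having already proved Theorem~\ref{j22.1}, it observes that the random labelings of the subdivision graphs $\calG_n$ and $\calH_n$ are (up to a negligible discrepancy in vertex counts) the same, so by \eqref{m30.1} the associated floodings $f_L$ must concentrate near both $\calF^M_*(\calG)$ and $\calF^M_*(\calH)$; if the two suprema differed, these sets would be disjoint compact sets at positive distance, a contradiction. Thus the paper's proof is indirect and probabilistic, leaning on the large-deviations machinery of Theorem~\ref{j22.1}.

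Your proof, by contrast, is a direct, purely deterministic convexity argument: you time-average the rates of $g\in\calF^M(\calH)$ over each interval between successive $\calG$-events and invoke Jensen's inequality for $x\mapsto -x\log x$ to conclude $\beta(f)\ge\beta(g)$. This is more elementary and self-contained (it does not require Theorem~\ref{j22.1}), and it yields the extra information that the optimal flooding for $\calH$ can always be taken with rates constant between $\calG$-events, i.e.\ essentially lies in $\calF^M(\calG)$. The validity check you flag---that averaging cannot create an early collision or an early arrival at a $\calG$-vertex---is exactly the right point to isolate, and your affineness/endpoint-nonnegativity argument handles it cleanly; note also that on each $(\tau_{l-1},\tau_l)$ the set of arms is genuinely constant because the only vertices of $\calH$ encountered there have degree~$2$, so arms neither branch nor terminate. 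The paper's approach is shorter once Theorem~\ref{j22.1} is in hand and highlights the link to random labelings, but yours would allow the lemma to be proved (and used) before that theorem.
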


\begin{proof}
Note that $\calF^M(\calG) \subset \calF^M(\calH)$ so
\begin{align}\label{f19.1}
\sup\{\beta(f): f\in \calF^M(\calH)\}
\geq \sup\{\beta(f): f\in \calF^M(\calG)\}.
\end{align}

We will not analyze $\calF^M$ directly. Instead, we will use 
Theorem \ref{j22.1}. Note that adding a finite number of extra vertices to $\calG$ does not change the possible random labelings of the subdivision graphs---they are the same for subdivisions of $\calG$ and $\calH$ (this is almost true; see the note at the end of the proof). If we suppose that the inequality in \eqref{f19.1} is strict then  $\calF^M_*(\calH) \ne \calF^M_*(\calG)$ and distance, in the supremum norm metric, between these families of functions is strictly positive because $\beta$ is a continuous function on $\calF^M(\calH)$. Hence, the shape of a typical random labeling $L^M_n$, in the sense of \eqref{m31.3}, must be different for $\calH$ and $\calG$, for large $n$. This contradicts the fact that we are dealing with the same family of random labelings in both cases.

We address a subtle but minor problem related to the claim that adding a finite number of extra vertices to $\calG$ does not change the possible random labelings of the subdivision graphs, i.e., they are the same for subdivisions of $\calG$ and $\calH$. If an edge of length $r_e$ is divided by an extra vertex into edges with lengths $r'_e$ and $r''_e$ then 
it is not necessarily true that $\lceil n r_e\rceil=\lceil n r'_e\rceil + \lceil n r''_e\rceil$. Hence, there may be an extra  vertex in the subdivision graph for $\calH$. 
The effect on calculations in the proof of Theorem \ref{j22.1} is negligible for large $n$.
\end{proof}

Parts (i) and (ii) of the following definition are taken from \cite{twinp}.

\begin{definition}\label{j24.2}
(i) Suppose that a graph $\calG$ is a tree with $N$ vertices. We will say that $x$ is a centroid of $\calG$ if each subtree of $\calG$ which does not contain $x$ has at most $N/2$ vertices.

(ii) 
Suppose that $\calG$ is a tree and $v_1$ is one of its vertices.
 Define a partial order ``$\leq$'' on the set of vertices $\calV$ by declaring that for $v_2,v_3\in \calV$, we have $v_2\leq v_3$ if and only if $v_2$ lies on the geodesic between $v_1$ and $v_3$.
If $v_2\leq v_3$ then we will say that $v_2$ is an ancestor of $v_3$ and $v_3$ is a descendant of $v_2$. Let $\n_{v_1}(v_2)$ be the number of  descendants of $v_2$, including $v_2$.

(iii) Suppose that a metric graph $S$ represents a tree graph and the total length of all edges of $S$ is $\zeta$. 
We will say that $v\in S$ is the centroid of $S$ if for every connected subset $A$ of $S\setminus \{v\}$, the total length of edges in $A$ is less than or equal to $\zeta/2$.

\end{definition}

\begin{lemma}\label{m13.5}
(i) A finite  tree graph has at least one and at most two centroids. If it has two centroids then they are adjacent.

(ii) A metric graph $S$ representing a tree graph has exactly one centroid.

\end{lemma}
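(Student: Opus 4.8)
The plan is to handle the two parts separately: part (i) is the classical Jordan centroid theorem, whereas part (ii) needs a genuinely ``continuous'' argument.

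\textbf{Part (i).} For a vertex $v$ of the finite tree $\calG$ with $N$ vertices, let $w(v)$ be the number of vertices in a largest connected component of $\calG\setminus\{v\}$. First I would show that any $v$ minimizing $w$ is a centroid: if $\calG\setminus\{v\}$ had a component $C$ with $|C|>N/2$, let $u\in C$ be the neighbour of $v$; the components of $\calG\setminus\{u\}$ are one set of size $N-|C|<N/2$ together with subsets of $C\setminus\{u\}$, each of size at most $|C|-1$, whence $w(u)<|C|=w(v)$, contradicting minimality. Since $\calG$ is finite a minimizer exists, so there is at least one centroid. For the bound on the number of centroids: if $v_1\neq v_2$ are both centroids, join them by the geodesic $v_1=p_0,p_1,\dots,p_k=v_2$ and let $h_i$ be the number of vertices lying in components of $\calG\setminus\{p_i\}$ that contain no other $p_j$; then $\sum_{i=0}^k h_i=N-k-1$, while the centroid condition at $p_0$ (resp.\ at $p_k$) forces $N-1-h_0\le N/2$ (resp.\ $N-1-h_k\le N/2$), i.e.\ $h_0,h_k\ge N/2-1$. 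Hence $N-2\le h_0+h_k\le N-k-1$, so $k\le 1$: two distinct centroids are adjacent, and since a tree contains no triangle there are at most two.

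\textbf{Part (ii), uniqueness.} For $p\in S$, the maximal connected subsets of $S\setminus\{p\}$ are exactly the connected components of $S\setminus\{p\}$, so $p$ is a centroid iff each of them has total length $\le\zeta/2$. Suppose $v_1\neq v_2$ are both centroids, and let $\ol B_i$ be the closure of the union of those components of $S\setminus\{v_i\}$ that do not contain $v_{3-i}$; equivalently $\ol B_i=\{x\in S:\text{the geodesic from }x\text{ to }v_{3-i}\text{ passes through }v_i\}$. The centroid property at $v_i$ gives $\Leb(\ol B_i)=\zeta-\Leb(\text{the component of }S\setminus\{v_i\}\text{ containing }v_{3-i})\ge\zeta/2$. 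A short median-point argument in the metric tree shows $\ol B_1\cap\ol B_2=\emptyset$ (a common point would force $v_1=v_2$). Therefore $\zeta\ge\Leb(\ol B_1)+\Leb(\ol B_2)\ge\zeta$, so both lengths equal $\zeta/2$ and $S=\ol B_1\sqcup\ol B_2$ (the complement is open and of zero length, hence empty). But then the geodesic from $v_1$ to $v_2$ is a connected set meeting both of the disjoint closed sets $\ol B_1$ and $\ol B_2$, which is impossible. So there is at most one centroid.

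\textbf{Part (ii), existence.} This is where I expect the only real work. I would distinguish two cases. (a) Some edge $e=\{x,y\}$ is such that deleting the interior of $e$ splits $S$ into two pieces of lengths $a\le\zeta/2$ (the $x$-side) and $b\le\zeta/2$ (the $y$-side); then the point $p\in e$ at distance $\zeta/2-a$ from $x$ cuts $S$ into two pieces of length exactly $\zeta/2$, and a direct check, including the boundary possibilities $p=x$ or $p=y$ where $p$ is a vertex, shows $p$ is a centroid. (b) Otherwise, for every edge exactly one of the two sides obtained by deleting its interior has length $>\zeta/2$; orient each edge towards that heavy side. Following the orientations from any vertex yields a simple path in $\calG$ (no vertex can repeat, $\calG$ being acyclic), so it terminates at a vertex $v$ from which no edge points outward; unwinding the definition, ``all edges at $v$ point towards $v$'' says precisely that every component of $S\setminus\{v\}$ has length $<\zeta/2$, so $v$ is a centroid. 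One also checks that this path never reaches a leaf (the edge at a leaf always points away from it), so such a terminal vertex genuinely exists; the single-edge graph is covered by case (a).

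\textbf{Main obstacle.} The delicate point is existence in part (ii). The natural functional $W(p)=\max\{\Leb(C):C\text{ a component of }S\setminus\{p\}\}$ is \emph{not} continuous --- it jumps upward as one leaves a vertex of degree $\ge 3$ into a short branch --- so one cannot simply minimize a continuous function over the compact space $S$. The orientation bookkeeping above sidesteps this; alternatively one can verify that $W$ is lower semicontinuous, hence attains its minimum $m$ on $S$, and then argue $m>\zeta/2$ is impossible since moving the minimizer slightly into its largest component strictly decreases $W$. Either route needs some care at vertices and at leaves. Everything in part (i) is routine once $w(\cdot)$ is introduced.
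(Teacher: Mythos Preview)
Your proof is correct and takes a genuinely different route from the paper's. For part (i) the paper simply cites \cite[Lemma~6.2]{twinp}, whereas you give the classical Jordan argument via the weight $w(v)$ and a clean geodesic counting bound. For part (ii) the paper does not argue directly in the continuum at all: it discretizes $S$ by placing roughly $n\ell(e)$ equidistant vertices on each edge, invokes part (i) to get one or two (adjacent) centroids of each discrete tree, and asserts that these converge as $n\to\infty$ to the unique centroid of $S$, leaving the details to the reader. Your approach instead works entirely in the metric tree: uniqueness via the disjoint half-spaces $\ol B_1,\ol B_2$ and the connectedness of $S$, existence via the edge-orientation sink argument (with the ``balanced edge'' case~(a) handled separately). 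Your method is self-contained and avoids the somewhat delicate limit argument the paper gestures at; the paper's method, on the other hand, recycles the discrete result and makes the link to subdivision graphs explicit, which fits the paper's overall program. One small point worth recording in a final write-up: in case~(b), the component of $S\setminus\{v\}$ through the edge $e$ has length $\ell(e)+c$ where $c$ is the non-$v$-side length, and your orientation hypothesis gives $c<\zeta/2-\ell(e)$, hence the component length is strictly below $\zeta/2$; you use this implicitly but it is the crux of why the sink is a centroid.
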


\begin{proof}
(i) This part appeared as Lemma 6.2 in \cite{twinp}.

(ii) One can discretize the metric tree $S$ by placing $n r_e$ equidistant vertices on $e$, for every edge $e$ of $S$. It is elementary to check that centroids of these graphs converge, as $n\to \infty$, to the unique centroid of $S$.
We leave the details to the reader. 
\end{proof}

Recall the notation from Definition \ref{m5.1}.


\begin{theorem}\label{m6.1}

Assume  that $\calG$ is a tree and and $f \in \calF^1_*$.

(i) Then $f(0)$ is the centroid of $S$.

(ii) Suppose that  $t\in (t_{k-1}, t_k)$ for some $1\leq k \leq \kf$. 
Let
$S^k_j(t)$ be the component of $S\setminus \{x^k_j(t)\}$ which does not contain $f(0)$. Let
$\zeta^k_j(t)$ be the total length of edges in $S^k_j(t)$. Then 
\begin{align}\label{m10.1}
z^k_j = \dot x^k_j(t) = \frac{\zeta^k_j(t)}{\sum_{j=1}^{m_k} \zeta^k_j(t)}
= \frac{\zeta^k_j(t)}{\zeta -t}.
\end{align}

\end{theorem}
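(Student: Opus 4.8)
The strategy is to maximize the entropy functional $\beta$ explicitly, exploiting the tree structure and the fact that each arm of a flooding on a tree ``serves'' a well-defined subtree. The key observation is that on a tree, once the source $f(0)$ is fixed, the flooding is forced: removing any boundary point $x$ of $f(t)$ disconnects $S$ into the part already flooded (containing $f(0)$) and the part $S_x$ still to be flooded, and the flooding can never turn back, so the total length of the arm rooted at $x$ over its entire lifetime equals the total length of $S_x$. This ``mass conservation per arm'' means that, once we decide the source, the only freedom in the flooding is the choice of the speeds $z^k_j$ within each linear segment, and the lengths $\zeta^k_j(t)$ of the subtrees being served are completely determined by the geometry. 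So the proof splits naturally: first optimize the speeds for a \emph{fixed} source (this gives part (ii) as a conditional statement and an explicit formula for $\beta$ in terms of the source), then optimize over the source (this gives part (i)).

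\textbf{Step 1: reduce to a sum over maximal linear segments.} Using Lemma \ref{f19.2}, I may assume (after subdividing) that the combinatorial structure of $S$ is as fine as convenient, in particular that every time $t_k$ at which the flooding changes its linear regime corresponds to the flooding reaching a vertex, and no two arms meet in an edge interior except at vertices. On a tree, arms never merge (a merge would create a cycle in the flooded set relative to $S$), so at each $t_k$ a single arm hits a vertex of degree $d$ and splits into $d-1$ new arms. Between consecutive such times the set of arms is fixed, each arm $j$ advances at constant speed $z^k_j$, and by mass conservation the length remaining in the subtree $S^k_j(t)$ served by arm $j$ decreases linearly at rate $z^k_j$. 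Summing over all arms, $\sum_j \zeta^k_j(t) = \zeta - t$ (the total unflooded length), which also shows $\sum_j z^k_j = 1$, consistent with \eqref{f5.3}; this is the second equality in \eqref{m10.1} and needs only to be recorded.

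\textbf{Step 2: optimize speeds for a fixed source.} With the source fixed, write $\beta(f) = -\sum_k (t_k-t_{k-1}) \sum_j z^k_j \log z^k_j$. I want to argue that the optimal $f$ must, on each linear segment, keep all $z^k_j$ equal to a \emph{fixed value determined by the subtree lengths}, not literally equal to each other. Here is the cleaner route: think of $\beta$ as an integral $\beta(f) = \int_0^\zeta \left(-\sum_j \dot x_j(t)\log \dot x_j(t)\right)\,dt$ over the flooding's lifetime, but reparametrize each arm by the length it has traversed. The total contribution of a single arm that is born at time $a$, serves a subtree of length $L$, and therefore dies at time $a+L$, is (by a change of variables and Jensen/concavity of $-x\log x$) maximized when it advances at constant speed; and the \emph{constant} it should advance at is dictated by balancing against its siblings. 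Concretely, on a segment $(t_{k-1},t_k)$ with arms serving subtrees of lengths $\zeta^k_1,\dots,\zeta^k_{m_k}$, the $-x\log x$ term is concave, so for fixed segment length the entropy rate is largest when the $z^k_j$ are proportional to the amount of ``future work'' each arm must do, i.e. $z^k_j \propto \zeta^k_j(t)$. One must check this choice is globally consistent: with $z^k_j = \zeta^k_j(t)/(\zeta-t)$, the ratios $\zeta^k_j(t)/(\zeta-t)$ are actually constant on each linear segment (both numerator and denominator decrease at the same aggregate rate only if... — in fact $\zeta^k_j(t) = \zeta^k_j(t_{k-1}) - z^k_j(t-t_{k-1})$ and $\zeta - t = (\zeta - t_{k-1}) - (t - t_{k-1})$, so constancy of the ratio is exactly the fixed-point equation $z^k_j = \zeta^k_j(t_{k-1})/(\zeta - t_{k-1})$, which has a unique solution). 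So the prescription is self-consistent and yields \eqref{m10.1}. I expect the main technical obstacle to be precisely this global optimization argument: showing that no flooding can beat the ``greedy proportional'' one, since a priori one could imagine sacrificing entropy early to gain more later. The clean way around it is a Lagrangian / exchange argument: given any competing flooding with the same source, compare it to the proportional one segment by segment using strict concavity of $t\mapsto -t\log t$, handling the bookkeeping that different floodings have different segment endpoints by passing to the common refinement (exactly as in the proof of Lemma \ref{lem:beta_cont}).

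\textbf{Step 3: optimize over the source.} Having computed $\beta$ as an explicit function of the source point $v = f(0)$ — it will be a sum of terms of the form $L_i \log(\text{something})$ where the $L_i$ are edge-lengths of the subtrees hanging off $v$ — I differentiate (or compare two nearby candidate sources) to locate the maximum. Moving the source a small distance $\epsilon$ along an edge towards a subtree of total length $L_1$ and away from one of length $L_2$ changes $\beta$ monotonically according to the sign of $L_1 - L_2$ (larger subtree ``wants'' the source closer), so $\beta$ is maximized exactly when no such move helps, i.e. when every connected component of $S \setminus \{v\}$ has length $\le \zeta/2$ — this is the centroid by Definition \ref{j24.2}(iii), and it is unique by Lemma \ref{m13.5}(ii). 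This gives part (i). Two small points to close: I should verify that the source really is a single point that the flooding must start from (for $M=1$, $\calF^1$ means $f(0)$ is a single point, per the definition), and I should confirm the optimizer lies in $\calF^1_*$ rather than merely $\calF^{\le 1}_*$, but for $M=1$ these coincide trivially. The continuity of $\beta$ (Lemma \ref{lem:beta_cont}) and compactness of $\calF^{\le 1}$ guarantee the supremum is attained, so the explicit critical point found above is genuinely the maximizer.
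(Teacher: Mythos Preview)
Your approach is fundamentally different from the paper's, and it has a genuine gap at the crucial step.

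\textbf{What the paper does.} The paper does not optimize $\beta$ directly. Instead it argues probabilistically, exploiting Theorem~\ref{j22.1}. For part (i), it uses the explicit ratio $p_v/p_y = \n_y(v)/\n_v(y)$ from \cite{twinp} for adjacent vertices $v,y$ in a subdivision graph $\calG_n$, telescopes along a path from the centroid, and obtains a Gaussian-type bound showing that under $\P_1^n$ the unique peak concentrates within $O(N^{1/2+\alpha})$ of the centroid. Then \eqref{m31.3} forces $f(0)$ to be the centroid. For part (ii), the paper \emph{defines} the candidate flooding $f$ by the formula \eqref{m10.1}, and then shows directly (via conditional independence and the multinomial law of large numbers, step by step over the intervals $(t_{k-1},t_k)$) that the random labeling, suitably rescaled, converges to this $f$. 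Since Theorem~\ref{j22.1} says the labeling concentrates on $\calF^1_*$, this identifies $f$ as the unique optimizer.

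\textbf{Where your argument breaks.} Your Step~2 contains a false statement: ``for fixed segment length the entropy rate is largest when the $z^k_j$ are proportional to the amount of future work.'' Concavity of $-x\log x$ on a single segment, subject only to $\sum_j z^k_j = 1$, gives the \emph{uniform} distribution $z^k_j = 1/m_k$, not the proportional one. The reason proportional speeds win globally is that they keep all arms alive as long as possible (so $m_k$ stays large), but this is a global trade-off across segments whose endpoints $t_k$ themselves move when you change the speeds. You acknowledge this as ``the main technical obstacle'' and propose a ``Lagrangian / exchange argument,'' but you do not carry it out, and a naive common-refinement comparison (as in Lemma~\ref{lem:beta_cont}) only gives continuity, not optimality. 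Without this step you have no formula for $\sup\{\beta(f): f(0)=v\}$, and Step~3 collapses as well.

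The direct variational route is not hopeless --- Remark~\ref{a2.6} sketches an alternative via the formula \cite[(6.1)]{twinp} applied to subdivision graphs, and the paper's own perturbation arguments in Theorem~\ref{f22.1} are in this spirit --- but note that the paper's proof of Theorem~\ref{f22.1}(vii) \emph{relies} on Theorem~\ref{m6.1}(ii), so you cannot simply invoke it. If you want to make your approach work, you need a genuine global argument for Step~2: for instance, parametrize by the arm-lengths $a^k_j = z^k_j(t_k - t_{k-1})$, use the form \eqref{f5.1} rather than \eqref{f25.1}, and set up the constrained optimization carefully, tracking how the $t_k$'s depend on the $a^k_j$'s through the tree structure.
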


The first and last equalities in \eqref{m10.1} follow directly from the definitions.

\begin{proof}[Proof of Theorem \ref{m6.1}]
(i) Consider a random labeling of a subdivision graph $\calG_n$ conditioned on having exactly one peak. We will estimate the distance of the peak from the centroid.

For a vertex $v$ in the subdivision graph $\calG_n$, let $p_v$ be the probability that  there exists only one peak and it is located at $v$. Recall $\n_v(y)$ from Definition \ref{j24.2}. If $v$ and $y$ are adjacent then, by \cite[(6.2)]{twinp},
\begin{align*}
\frac{p_v}{p_y} = \frac {\n_y(v )} { \n_v(y)}.
\end{align*}
Suppose that $v_0,v_1, \dots,v_k$ is a sequence of distinct vertices such that $v_{j-1} \lra v_j$ for all $j$ and $v_0$ is one of the centroids.
Let $N$ be the number of vertices of $\calG_n$ and note that $k\leq N/2$ because $v_0$ is a centroid. Then
$\n_{v_{j-1}}(v_j) \leq N/2 -j+1$ and $\n_{v_{j}}(v_{j-1}) \geq N/2 +j-1$
for $j=1,\dots,k$, so
\begin{align*}
\frac{p_{v_k}}{p_{v_0}}
&= \prod_{j=1}^k  \frac {p_{v_j}}{p_{v_{j-1}}}
= \prod_{j=1}^k  \frac {\n_{v_{j-1}}(v_j )} { \n_{v_j}(v_{j-1})}
\leq \prod_{j=1}^k  \frac {N/2 -j+1} { N/2 +j-1}
= \prod_{j=1}^k  \frac {1 -2(j-1)/N} { 1 +2(j-1)/N}\\
&= \exp\left( \sum_{j=1}^k \left(\log(1 -2(j-1)/N) - \log( 1 +2(j-1)/N)\right) \right)\\
&\leq \exp\left( \sum_{j=1}^k \log(1 -2(j-1)/N)  \right)
\leq \exp\left( -\sum_{j=1}^k 2(j-1)/N  \right)\\
&= \exp( (k-k^2)/N).
\end{align*}
Let $\calA_k$ be the set of all vertices at distance $k$ from one of the centroids and let $\calC_j= \bigcup_{k\geq j} \calA_k$. Since we are dealing with a subdivision graph, the cardinality of $\calA_k$ is bounded by some $K<\infty$ for all $N$ and $k$. 
If $k\geq 2$ then $k^2 - k \geq k^2/2$. Hence
for $j \geq 2$, 
\begin{align*}
 \sum_{v\in \calC_j} p_v
&\leq \sum_{v\in \calC_j} \frac{p_v}{p_{v_0}}
= \sum_{k\geq j} \sum_{v\in \calA_k} \frac{p_v}{p_{v_0}}
\leq \sum_{k\geq j} \sum_{v\in \calA_k} \exp( (k-k^2)/N)
\leq \sum_{k\geq j} K \exp( (k-k^2)/N)\\
&\leq \sum_{k\geq j} K \exp( -k^2/(2N))
\leq K \int_{j-1}^\infty \exp(-u^2/(2N)) du\\
&= K \sqrt{2N} \int_{(j-1)/\sqrt{2N}}^\infty \exp(-s^2) ds\\
&\leq K \sqrt{2N} \frac 1 { 2 (j-1)/\sqrt{2N}} \exp ( - ((j-1)/\sqrt{2N})^2)\\
&= \frac {KN}{j-1} \exp(- (j-1)^2 /(2N)).
\end{align*}
If we take $j = \lfloor N^{1/2+\alpha}\rfloor$ for some $\alpha >0$ and let $N\to \infty $ then 
the  quantity on the last displayed line goes to 0. 
Recall that centroids of subdivision graphs $\calG_n$ converge to the centroid of $S$ when $n\to \infty$.
It follows that for any neighborhood $U$ of the centroid $v_*$ of $S$ and for any $c_1>0$, there exists $N_1$ such that  if $\calG_n$ is a subdivision graph with $|\calV_n| \geq N_1$ and we condition a random labeling of the subdivision graph $\calG_n$ to have only one peak then the probability that the peak is outside $U$ is less than $c_1$.
This and \eqref{m31.3} easily imply part (i) of the theorem.

(ii)
Let $f$ be the unique flooding such that (a) $f(0)$ is the centroid $v_*$ of $S$, (b) $f(t_k)$ contains a vertex of degree greater than or equal to 3 for all $1\leq k \leq \kf$, and (c) the condition \eqref{m10.1} is satisfied for all  $1\leq k \leq \kf$ and  $t\in (t_{k-1}, t_k)$.  We will show that random labelings of subdivision graphs conditioned on having a single peak converge to $f$ in the sense of Theorem \ref{j22.1}. In view of Theorem \ref{j22.1} and part (i) of the present theorem, this will prove that  $f \in \calF^1_*$ and $\calF^1_*$ contains only one function and, in turn, will imply part (ii) of the present theorem. It remains to show that random labelings of subdivision graphs conditioned on having a single peak converge to $f$.

Let $d_{k-1}$ be the number of connected components of $S\setminus f(t_{k-1})$.
Let $\zeta_j^{k-1}$ be the total length of all edges of the $j$-th connected component of $S\setminus f(t_{k-1})$, for $j=1,\dots, d_{k-1}$, and let $\zeta_- = 1\land \min_{1\leq k \leq \kf} \min_{j=1,\dots, d_{k-1}} \zeta^{k-1}_j$.

Recall $e^k_j$ from Definition \ref{m5.1}.
Consider arbitrarily small $c_1,c_2>0$ and suppose that
\begin{align}\label{a27.1}
\eps \in\left(0, \min_{1\leq k \leq \kf}
\min_{1\leq j \leq m_k} \frac{\zeta_-(t_k - t_{k-1})\ell(e^k_j)}{4\zeta\kf}\right).
\end{align}
Let $L_n$ denote a random labeling of $\calG_n$ and let $R_n = L_n^{-1}$ be its inverse function. 
Let $N_n = |\calV_n|$ and let  $C^n_j = R_n(\{N_n, N_n-1, \dots, N_n-j\})$  for $0 \leq j \leq N_n-1$. For $0\leq k \leq \kf-1$, let $B^n_k = C^n_j$ where $j = \lfloor N_n (t_k+(k+1)\eps)/\zeta \rfloor$.
Let
$D^n_k \subset S$ be the union of all geodesics in $S$ between points of $B^n_k$. 
For $0\leq k \leq \kf-1$, let
$\wh F^n_k $ be the event 
$\{  f(t_k) \subset D^n_k \subset f(t_k+2(k+1)\eps\zeta/\zeta_-) \} $.
Condition \eqref{a27.1} implies that $\prt f(t_{k+1}) \cap f(t_k+2(k+1)\eps\zeta/\zeta_-) = \emptyset$ for all $k$.
For $1\leq k \leq \kf$, let
$\wt F^n_k $ denote the event that  $L_n/n$ restricted to $\calV_n \cap e^k_j$ is within $c_1$ of a linear function, in the supremum norm, for every $1\leq j \leq m_k$.
Let $F^n_0 =\wh F^n_0$, $F^n_\kf =\wt F^n_\kf $, and
for $1\leq k \leq \kf-1$, let
$F^n_k = \wh F^n_k \cap \wt F^n_k$.
 We will prove that there exists $n_1$ such that for $n\geq n_1$, the following statements are true,
\begin{align}\label{m10.2}
\P_1(F^n_0) &\geq 1-c_2, \\
\P_1(F^n_k \mid F^n_{k-1}) &\geq 1-c_2, \qquad 1 \leq k \leq \kf. 
\label{m10.3}
\end{align}

By the last paragraph of part (i) of the proof,  for large $n$,
\begin{align}\label{m11.2}
\P_1\left(\rho( R_n(N_n) , v_*) < \eps \zeta_-/(8\zeta)\right) > 1-c_2/2.
\end{align}

Consider $v_0 \in \calV_n$ such that $\rho(v_0, v_*) < \eps \zeta_-/(8\zeta)$. 
If $v_0 = v_*$ and $\{R_n(N_n)=v_0\}$ then $F^n_0$ holds.

Consider the case when $v_0\ne v_*$.
We can make $\eps>0$ smaller, if necessary, an assume that $n$ is large so that the number  of connected components of  $\calV_n \setminus \{R_n(N_n)\}$ is 2.
Let $\calV_n^{0,1}$ and $\calV_n^{0,2}$, be the  connected components of  $\calV_n \setminus \{R_n(N_n)\}$.
Note that if $n$ is sufficiently large then
\begin{align}\label{m11.1}
|\calV_n^{0,j}| \geq (\zeta_-/(2\zeta)) |\calV_n|, \qquad j=1,2.
\end{align}

Given the event $\{R_n(N_n)=v_0\}$, the event that there are no other peaks is independent under $\P$ from the allocation of integers in $[N_n-1]$ to the sets $\calV_n^{0,1} $ and $\calV_n^{0,2}$, i.e., it is independent  from the $\sigma$-field generated by the random sets $L_n(\calV_n^{0,1})$ and $ L_n(\calV_n^{0,2})$. It follows that, under $\P_1$ conditioned by $\{R_n(N_n)=v_0\}$, the random sets $L_n(\calV_n^{0,1})$ and $L_n(\calV_n^{0,2})$ are distributed uniformly, i.e., all allowed pairs of sets are equally likely.
This and \eqref{m11.1} imply that if  $n$ is sufficiently large then
\begin{align}
\P_1&\left(\, \left|\,[\,\lfloor N_n(1-\eps/\zeta)\rfloor, N_n] \cap L_n(\calV_n^{0,j})\right|
\geq N_n \eps \zeta_-/(4\zeta^2)
\text{  for  } j=1,2 \mid R_n(N_n)=v_0\right) \nonumber \\
& > 1-c_2/4.\label{m11.3}
\end{align}
If the events in \eqref{m11.2} and \eqref{m11.3} hold and the labeling has only one peak then $L_n(v_*)$ must be one of the top $N_n \eps /\zeta$ values of $[N_n]$ and, therefore, $\{  f(0)=f(t_0) \subset D^n_0 \} $ holds. 

Condition \eqref{m10.1} implies that $f(2\eps\zeta/\zeta_-)$ extends at least $2\eps$ units from $v_*$ along each  connected component of $S\setminus \{v_*\}$.
We  have assumed that
$\rho(v_0, v_*) < \eps \zeta_-/(8\zeta)$. 
These observations and the fact that $|B^n_0| = \lceil (\eps/\zeta) N_n\rceil$ imply  that the event
$\{  D^n_0 \subset f(2\eps\zeta/\zeta_-) = f(t_0+2(0+1)\eps\zeta/\zeta_-) \} $ holds.

The last claim and the estimates  \eqref{m11.2} and \eqref{m11.3}
show that the $\P_1$-probability of the event $F^n_0 = \{f(t_0) \subset D^n_0 \subset  f(t_0+2(0+1)\eps\zeta/\zeta_-) \} $ is greater than $1-3c_2/4$ so \eqref{m10.2} is proved.

\medskip

Next we will prove \eqref{m10.3}.
Suppose that the event $ F^n_{k-1}$ holds for some $ 1 \leq k \leq \kf$.

The assumption that $\wh F^n_{k-1}$ holds and \eqref{a27.1} imply that  the number  of connected components of  $\calV_n \setminus B^n_{k-1}$ is $d_{k-1}$.
Let $\calV_n^{k-1,1},\calV_n^{k-1,2}, \dots ,\calV_n^{k-1,d_{k-1}} $ be the  connected components of  $\calV_n \setminus B^n_{k-1}$.

Given $B^n_k$ and the event that there is only one peak in $B^n_k$,  the event that there are no other peaks is independent under $\P$ from the allocation of integers in $[N_n]\setminus L(B^n_k)$ to the sets $\calV_n^{k-1,1},\calV_n^{k-1,2}, \dots ,\calV_n^{k-1,d_{k-1}} $, i.e., it is independent  from the $\sigma$-field generated by the random sets $L_n(\calV_n^{k-1,1}), \dots ,L_n(\calV_n^{k-1,d_{k-1}}) $. It follows that, under $\P_1$ conditioned on $B^n_k$ and one peak in $B^n_k$,  all allowed  sets $L_n(\calV_n^{k-1,1}), \dots ,L_n(\calV_n^{k-1,d_{k-1}}) $ are equally likely.
This  implies that if  $n$ is sufficiently large then for $j=1,\dots, d_{k-1}$,
\begin{align}\label{m12.3}
\P_1&\Bigg(\, \left|\left[
\left\lfloor N_n(1-(t_k+(k+1)\eps)/\zeta ) \right\rfloor ,
\left\lceil N_n(1-(t_{k-1}+k\eps)/\zeta) \right\rceil \right]
\cap L_n(\calV_n^{k-1,j})\right|\\
&\qquad\in \left( 
\frac{N_n ((t_k-t_{k-1}) +\eps/2) \zeta^{k-1}_j}{\zeta(\zeta - t_{k-1})},
\frac{N_n ((t_k-t_{k-1}) +3\eps/2) \zeta^{k-1}_j}{\zeta(\zeta - t_{k-1})}
\right)
 \mid F^n_{k-1}\Bigg) \nonumber \\
& \geq 1-c_2/(2d_{k-1}).\nonumber
\end{align}
We will explain the geometric meaning of the event in the above formula. If the labeling has a single peak then label values are monotone along paths emanating from $R_n(N_n)$. Assuming that this is the case, the set
\begin{align*}
\left[
\left\lfloor N_n(1-(t_k+(k+1)\eps)/\zeta ) \right\rfloor ,
\left\lceil N_n(1-(t_{k-1}+k\eps)/\zeta) \right\rceil \right]
\cap L_n(\calV_n^{k-1,j})
\end{align*}
represents
is a sequence of adjacent vertices. An estimate of the graph distance between the extreme vertices in this sequence is given in \eqref{m12.3}. Ignoring $\eps$-size corrections, this distance is 
\begin{align*}
\frac{N_n (t_k-t_{k-1}) \zeta^{k-1}_j}{\zeta(\zeta - t_{k-1})}.
\end{align*}
This is equivalent, up to negligible terms, to the $S$-distance
\begin{align*}
\frac{ (t_k-t_{k-1}) \zeta^{k-1}_j}{\zeta - t_{k-1}}
\end{align*}
between the extreme vertices in the sequence.
The last quantity is the distance between $\prt f(t_{k-1})$ and $\prt f(t_k)$ along the $j$-th component of $S\setminus f(t_{k-1})$
because we have assumed that $f$ satisfies \eqref{m10.1}.

If the event in  \eqref{m12.3} and $\{  f(t_{k-1}) \subset D^n_{k-1} \} $
 hold and there is only one peak in $B^n_k$ then, for every $j=1,\dots, d_{k-1}$, then the vertex in $\calV_n^{k,j}$ corresponding to $\prt f(t_k)$ belongs to $B^n_k$ and, therefore, 
$\{  f(t_k) \subset D^n_k \}$.

Note that $ \zeta^{k-1}_j/(\zeta - t_{k-1}) \leq 1$. It follows that if the event in  \eqref{m12.3}  and
 $\{ D^n_{k-1} \subset f(t_{k-1}+2k\eps\zeta/\zeta_-) \} $ hold then
$ D^n_{k} \subset f(t_{k}+2(k+1)\eps\zeta/\zeta_-)  $. 

The above argument proves the following weaker version of \eqref{m10.3},
\begin{align}\label{m13.1}
\P_1(\wh F^n_k \mid F^n_{k-1}) &\geq 1-c_2/2, \qquad 1 \leq k \leq \kf.
\end{align}

The event $\wt F^n_k$ is concerned with linearity of the function $L_n$.
A standard argument, similar to that that gives
\eqref{m30.2}, shows that for large $n$,
\begin{align*}
\P_1(\wt F^n_k \mid F^n_{k-1}) &\geq 1-c_2/2, \qquad 1 \leq k \leq \kf.
\end{align*}
This and \eqref{m13.1} prove \eqref{m10.3}.

Since $c_1$ and $c_2$ are arbitrarily small, it follows from \eqref{m10.2}-\eqref{m10.3} that the $\P_1$-probability of $ \bigcap_{0 \leq k \leq \kf} F^n_k$ goes to 1 as $n\to \infty$. This implies that  random labelings of subdivision graphs conditioned on having a single peak converge to $f$ in the sense of Theorem \ref{j22.1}. This completes the proof.
\end{proof}

\begin{lemma}\label{m21.3}
Suppose that $M\geq 2$, $f \in \calF^M_*$, $v\in S$ and $v\notin f(t)$ for all $t< \zeta$. Assume that $x^\kf_i(t)$ and $x^\kf_j(t)$ approach $v$ as $t\to \zeta$. Then $z^\kf_i = z^\kf_j$.
\end{lemma}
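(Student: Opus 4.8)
The plan is to run a local perturbation argument in the final time interval $[t_{\kf-1},\zeta]$ and use strict convexity of $\phi(x)=x\log x$. Since $v\notin f(t)$ for $t<\zeta$ while $f(\zeta)=S$, and the maps $x^\kf_i,x^\kf_j$ are linear on $[t_{\kf-1},\zeta]$ with $x^\kf_i(t),x^\kf_j(t)\to v$, both arms in fact reach $v$ exactly at time $\zeta$; moreover $z^\kf_i,z^\kf_j>0$, since a stationary arm ($z=0$) cannot approach a point different from its own base, and $x^\kf_i(t_{\kf-1})\in\prt f(t_{\kf-1})$ while $v\notin f(t_{\kf-1})$. Consider first the generic situation in which $v$ is an interior point of a single edge $e$ and the two arms sweep, from the two sides along $e$, the component $(a,b)\ni v$ of $e\setminus f(t_{\kf-1})$ that contains $v$ (this component has no vertices in its interior). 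Writing $L=\dist\bigl(x^\kf_i(t_{\kf-1}),x^\kf_j(t_{\kf-1})\bigr)$ for the length of this gap, the requirement that the two fronts meet precisely at $t=\zeta$ forces $z^\kf_i+z^\kf_j=L/(\zeta-t_{\kf-1})$, but it leaves the split between $z^\kf_i$ and $z^\kf_j$ free; different splits merely move the meeting point inside $(a,b)$.

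Now fix $\eps\in\R$ of small absolute value and let $\wt f$ coincide with $f$ except that on $[t_{\kf-1},\zeta]$ we replace $(z^\kf_i,z^\kf_j)$ by $(z^\kf_i+\eps,z^\kf_j-\eps)$, keeping every other rate $z^\kf_l$, all the breakpoints $t_0<\dots<t_\kf=\zeta$, and the sets $f([0,t_k])$ for $k\le\kf-1$ unchanged. Since the combined rate $z^\kf_i+z^\kf_j$ and all other rates are unchanged, arms $i$ and $j$ still meet at time $\zeta$ (at a slightly shifted interior point of $(a,b)$), no other outward normal vector is affected, and no spurious event is created in $(t_{\kf-1},\zeta)$; hence $\wt f\in\calF^M$ with ${\bf k}(\wt f)$ and all breakpoints unchanged. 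Only the $k=\kf$ term of \eqref{f25.1} changes, so
\begin{align*}
\beta(\wt f)-\beta(f)=-(\zeta-t_{\kf-1})\bigl(g(\eps)-g(0)\bigr),\qquad
g(\eps)=\phi(z^\kf_i+\eps)+\phi(z^\kf_j-\eps).
\end{align*}
As $\phi$ is strictly convex, $g$ is strictly convex with $g'(\eps)=\log\bigl((z^\kf_i+\eps)/(z^\kf_j-\eps)\bigr)$, so $g'(0)=\log(z^\kf_i/z^\kf_j)$. If $z^\kf_i\ne z^\kf_j$, then $g'(0)\ne0$, and choosing $\eps$ with the sign of $-g'(0)$ gives $g(\eps)<g(0)$, hence $\beta(\wt f)>\beta(f)=\beta^*(S,M)$; this is impossible since $\wt f\in\calF^{\le M}$ and $\beta^*(S,M)=\sup\{\beta(h):h\in\calF^{\le M}\}$. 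Therefore $z^\kf_i=z^\kf_j$.

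The step I expect to require the most care is the reduction to the generic situation. If $v$ is a vertex of degree $2$ one may, as in the proof of Lemma \ref{f19.2}, merge its two incident edges into one and treat $v$ as an interior point, so nothing new happens. The genuinely delicate case is when $v$ is a vertex of degree $\ge3$, or more generally when the two arms reach $v$ along distinct edges: then each of $z^\kf_i$, $z^\kf_j$ is individually pinned by the constraint "sweep the remaining segment of its own edge exactly by time $\zeta$", so the simple one–parameter deformation above is no longer available. In that case one must perturb the two arms over a longer portion of their histories — adjusting the configuration at time $t_{\kf-1}$, and possibly $t_{\kf-1}$ itself — while keeping the total rate equal to one; the entropy computation is unchanged (again reducing to strict convexity of $\phi$, so that equality of the two rates is optimal), but the check that the deformed object is a genuine flooding is more involved, and is the main obstacle to a fully general treatment.
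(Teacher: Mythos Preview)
Your approach is essentially the paper's: perturb the two speeds in the final interval keeping their sum fixed, and invoke strict convexity of $x\mapsto x\log x$. The only cosmetic difference is that the paper jumps directly to the symmetric choice $\ol z=(z^\kf_i+z^\kf_j)/2$ rather than using an infinitesimal $\eps$; the entropy inequality is then the one-line Jensen estimate
\[
z^\kf_i\log z^\kf_i + z^\kf_j\log z^\kf_j > 2\ol z\log\ol z,
\]
which yields $\beta(g)>\beta(f)$ immediately.

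Your flagged ``delicate case'' (when $v$ is a vertex of degree $\ge 3$, so the two arms live on different edges) is a real issue, and the paper's write-up glosses over it just as much as yours does. But your diagnosis that ``the simple one-parameter deformation is no longer available'' is too pessimistic. The fix is local and does not require perturbing earlier time intervals: when the sped-up arm reaches $v$ at some $t'<\zeta$, insert a breakpoint at $t'$, let that arm continue with the same speed $\ol z$ into the edge carrying the other arm, and assign speed $0$ to the newly created outward normals at $v$ pointing into the remaining incident edges (those edges are being swept from their far ends by other arms of $f$, unchanged). The rate vector still sums to $1$, the modified object is a bona fide flooding, it still covers $S$ at time $\zeta$, and the extra zero-speed arms contribute $0\log 0=0$ to $\beta$. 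Hence the entropy comparison reduces to exactly the same convexity inequality as in the generic case. So your proof is correct once this small patch is made; no ``longer portion of history'' perturbation is needed.
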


\begin{proof}
Let $k = \kf -1$.  If $v$ lies half way between $ x^\kf_i(t_k)$ and $ x^\kf_j(t_k)$ then we must have $z^\kf_i = z^\kf_j$.

Suppose that $v$ is not the midpoint of the geodesic between $ x^\kf_i(t_k)$ and $ x^\kf_j(t_k)$. 
Let $\ol z = (z^\kf_i + z^\kf_j)/2$.
Consider a flooding $g$ constructed by modifying $f$ as follows. The function 
$x^\kf_i(t)$ is replaced with a function $\wh x^\kf_i(t)$, starting at $\wh x^\kf_i(t_k)=x^\kf_i(t_k)$,  moving along the same edge and in the same direction as $x^\kf_i(t)$ with the 
speed $\ol z$. We define $\wh x^\kf_j(t)$ in the analogous way.
All other functions $x^n_r$ associated to $f$ remain the same for $g$.
Then, by the properties of entropy,
\begin{align*}
\beta(g) - \beta(f) &=
 -(t_{k+1} - t_{k})  \left(z^\kf_i \log z^\kf_i + z^\kf_j \log z^\kf_j\right)
+ 2 (t_{k+1} - t_{k}) \ol z \log \ol z \\
&= -(t_{k+1} - t_{k})  \left(z^\kf_i \log z^\kf_i + z^\kf_j \log z^\kf_j 
-2  \ol z \log \ol z\right) > 0.
\end{align*}
This contradicts the assumption that $f \in \calF^M_*$ and thus proves the lemma.
\end{proof}

Recall the notation from Definition \ref{m5.1}.
We will say that $f$ has a dormant arm if for some $i,j$ and $k$ we have
 $x^k_j(t_{k-1}) = x^{k-1}_i(t_{k-2})$ and
 $x^k_j(t_{k-1})\ne x^k_j(t_k)$.

Next suppose that $M>1$. For every $v\in f(0)$ let $S_v(t)$ be the connected component of $f(t)$ which contains $v$, for $t\in[0, \zeta)$. Let $S_v$ be the closure of $\bigcup_{1\leq t < \zeta} S_v(t)$.

\begin{theorem}\label{f22.1}
If $M\geq 1$ and $f \in \calF^M_*$ then the following hold.

(i) The function $k \to m_k$ is non-decreasing. 

(ii) 
The function $f$ has no dormant arms.

(iii) For all $t < \zeta$, $f(t)$ does not contain any vertex of degree 1.

(iv) For all $t < \zeta$, $f(t)$ has exactly $M$ disjoint connected components.

(v) For all $t < \zeta$, $f(t)$ contains no loops.

(vi) For all $t < \zeta$, the points in the set $f(0)$ are centroids of the tree metric graphs comprising $f(t)$, i.e., connected components of $f(t)$.

(vii) 
Suppose that $y\in f(0)$ and let $S_y(t)$ be the connected component of $f(t)$ which contains $y$, for $t\in[0, \zeta)$. Let $S_y$ be the closure of $\bigcup_{1\leq t < \zeta} S_y(t)$.
Suppose that  $t\in (t_{k-1}, t_k)$ for some $1\leq k \leq \kf$, $v,w\in \prt f(t) \cap S_y$, $v=x^k_i(t)$, $w=x^k_j(t)$, and let $S_{yv}$ denote the connected component of $S_y\setminus \{v\}$ which does not contain $y$; let $S_{yw}$ be defined in the analogous way.  Let
$\zeta_v$ be the total length of edges in $S_{yv}$, and let $\zeta_w$ be the total length of edges in $S_{yw}$. Then,
\begin{align*}
\frac{z^k_j}{z^k_i} = \frac{\dot x^k_j(t)}{\dot x^k_i(t)} = \frac{\zeta_w}{\zeta_v}.
\end{align*}

(vii) 
Suppose that $y_1, y_2\in f(0)$, $t\in (t_{k-1}, t_k)$ for some $1\leq k \leq \kf$, $v\in \prt f(t) \cap S_{y_1}$, $v=x^k_i(t)$, $w\in \prt f(t) \cap S_{y_2}$, $w=x^k_j(t)$, and let $S_{y_1v}$ denote the connected component of $S_{y_1}\setminus \{v\}$ which does not contain $y_1$. Let $S_{y_2w}$ be the connected component of $S_{y_2}\setminus \{w\}$ which does not contain $y_2$.  Let
$\zeta_v$ be the total length of edges in $S_{y_1v}$, and let $\zeta_w$ be the total length of edges in $S_{y_2w}$. Then,
\begin{align}\label{m15.11}
\frac{z^k_j}{z^k_i} = \frac{\dot x^k_j(t)}{\dot x^k_i(t)} = \frac{\zeta_w}{\zeta_v}.
\end{align}

\end{theorem}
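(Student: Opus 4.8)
\medskip\noindent
\textbf{Proof plan.}
The plan is to combine rearrangement (``exchange'') arguments against optimality with one structural identity for $\beta$. Writing the arm speeds $z_j(\cdot)$ as piecewise constant functions with $\sum_j z_j(t)=1$, formula \eqref{f25.1} reads $\beta(f)=-\int_0^\zeta\sum_j z_j(t)\log z_j(t)\,dt$. Grouping the arms of $f(t)$ by the connected component of $f(t)$ they bound, and letting $\rho_c(t)$ be the sum of the speeds of the arms in component $c$, one obtains the identity
\begin{align*}
\beta(f)=\int_0^\zeta\Big(-\sum_c \rho_c(t)\log\rho_c(t)\Big)\,dt+\sum_c\widetilde\beta_c,
\end{align*}
where $\widetilde\beta_c$ is the $\beta$-value of the sub-flooding of component $c$ after the piecewise linear time change $\tau_c=\int_0^t\rho_c(s)\,ds$; this time change turns that sub-flooding into a genuine flooding of the closure of the component, with $\tau_c$ running from $0$ to the total length $\zeta_c$ of that component and $\sum_c\zeta_c=\zeta$. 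The only analytic inputs will be: strict concavity of $\varphi(x)=-x\log x$; the relation $\varphi'(0^+)=+\infty$; the strict superadditivity $\varphi(x_1)+\varphi(x_2)>\varphi(x_1+x_2)$ for $x_1,x_2>0$ (a consequence of concavity and $\varphi(0)=0$); and Theorem \ref{m6.1} for the one-source problem on a tree.

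\emph{Part (ii) and the ``$M$ sources'' half of (iv).} If $f$ had a dormant arm then some speed $z^{k-1}_i$ would vanish; activating that arm with speed $\eps$ over $(t_{k-2},t_{k-1})$, rescaling the other speeds by $1-\eps$ and absorbing the resulting head start further along, changes $\beta$ by $-\eps\log\eps+O(\eps)>0$ for small $\eps$, contradicting $f\in\calF^M_*$. If $f$ had fewer than $M$ sources, I would plant an extra source deep in a pendant subtree of one of the component closures $S_y$, thereby refining the limiting partition of $S$ into one more piece; by strict superadditivity of $x\mapsto\varphi(x/\zeta)$ the first term of the identity strictly increases, while (a recursive use of the identity shows) $\sum_c\widetilde\beta_c$ cannot decrease. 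Hence every $f\in\calF^M_*$ has exactly $M$ sources; in particular $f\in\calF^M$.

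\emph{Parts (iv) ``no merging'' and (v) ``no loops'' --- the main obstacle.} Suppose two components of $f$ touch at some time $\tau^*<\zeta$ (a loop forming is the self-touching variant, treated identically). Using the identity in the version where the partition into components is allowed to change at $\tau^*$ --- $M$ blobs before, $M-1$ after --- I would compare $f$ with the competitor $g$ that keeps these two blobs apart until time $\zeta$: slow the two converging arms so that they meet exactly at $\zeta$ and reallocate the freed speed. Then the first term of the identity strictly increases (the partition stays finer), and the second term does not decrease, since the optimal one-source flooding of a tree dominates, after a time change, any flooding of that tree started from the two-blob set produced by fusing two subtrees. This gives $\beta(g)>\beta(f)$, a contradiction. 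The hard part will be to turn ``keep the blobs apart'' into a rigorously legal flooding $g$ and to establish the stated domination for $\widetilde\beta$; Lemma \ref{m21.3} is exactly the degenerate case $\tau^*=\zeta$ and serves as a prototype. Granting (iv), each component closure $S_y$ is a topologically unchanging growing tree, so its limit is a tree as well, which is (v).

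\emph{Parts (vi), (vii), (viii), (iii), (i).} With (iv) and (v) in hand the component closures $S_{y_1},\dots,S_{y_M}$ are fixed trees, and the identity forces each $\widetilde\beta_c$ to be maximal at an optimizer; by Theorem \ref{m6.1} each sub-flooding is therefore the optimal one-source flooding of the tree $S_{y_c}$: source at the centroid and $\tau_c$-speed of each arm equal to the total length beyond it divided by $\zeta_c-\tau_c$. Integrating this relation shows that along every direction from the source a length proportional to that direction's subtree has been covered at each time, so the source is also the centroid of every component of $f(t)$ --- this is (vi). At the same time, Jensen's inequality applied to the first term of the identity, under the constraint $\int_0^\zeta\rho_c=\zeta_c$, forces $\rho_c\equiv\zeta_c/\zeta$, hence $\tau_c(t)=(\zeta_c/\zeta)\,t$; substituting into the speed formula gives $z^k_j=\zeta_v/(\zeta-t)$ for the arm sitting at $v=x^k_j(t)$, \emph{with no reference to which component $v$ lies in}, which is both ratio identities (vii) and (viii) simultaneously, and re-derives \eqref{m10.1}. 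Since the optimal one-source flooding of a tree reaches a leaf only as $\tau_c\uparrow\zeta_c$, i.e.\ as $t\uparrow\zeta$, no degree-$1$ vertex lies in $f(t)$ for $t<\zeta$: this is (iii). Finally (i) follows: by (iii)--(v) each component is a tree whose topology never changes and which must be completely covered by time $\zeta$, so at a breakpoint $t_k<\zeta$ an arm either passes a degree-$2$ vertex (the number of outward normals is unchanged) or reaches a vertex of degree $d\ge 3$, into \emph{all} of whose $d-1$ remaining edges it must branch, on pain of leaving an edge of the tree forever uncovered, raising the count by $d-2\ge1$; no outward normal is ever destroyed, so $k\mapsto m_k$ is non-decreasing.
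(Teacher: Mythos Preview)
Your decomposition identity and your treatment of (vi)--(viii) match the paper closely (the identity is precisely \eqref{m13.7}--\eqref{m21.2}), and the $-\eps\log\eps$ perturbation for (ii) is essentially the paper's argument. The divergence is architectural, and it leaves a real gap.

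You try to establish (iv)--(v) \emph{through} the identity, by building a competitor that keeps the colliding blobs apart and arguing that $\sum_c\widetilde\beta_c$ cannot decrease. You correctly flag this as the hard part, and it does not close as stated: the claimed domination (``the optimal one-source flooding of a tree dominates, after a time change, any flooding of that tree started from the two-blob set'') is not available here --- you do not yet know the component closures $S_y$ are disjoint trees, and comparing the $\widetilde\beta$ of a genuine one-source flooding against that of a process which at time $\tau^*$ has already covered a positive-length set is a different problem for which you have no independent tool. Your ``exactly $M$ sources'' argument has the same defect: that $\sum_c\widetilde\beta_c$ cannot decrease when one $S_y$ is split in two is again a comparison of optimal floodings on different trees. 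Since your proofs of (i), (iii), and (vi)--(viii) all sit downstream of (iv)--(v), the gap propagates through the whole plan.

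The paper sidesteps all of this by reversing the order. It proves (i) \emph{first}, with nothing else in hand, via the same $-\eps\log\eps$ mechanism you use for (ii): whenever some arm $x^k_1$ is active on $(t_{k-1},t_k)$ but has no continuation among the $x^{k+1}_j$, truncate it by $\eps$ at stage $k$ and graft the missing stub in as a new arm at stage $k+1$; the change in $\beta$ is dominated by $-z^k_1\eps\log\bigl(z^k_1\eps/(t_{k+1}-t_k+\eps)\bigr)>0$. Parts (iii), (iv), (v) then fall out in one line each: an arm reaching a leaf, two components touching, or a loop closing each produce exactly such a discontinued arm at $t_k$, so the perturbation of (i) applies verbatim --- no global competitor with altered topology is ever needed. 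The ``$M$ sources'' half of (iv) is handled by planting an idle source at any point of $S\setminus f(\zeta-)$, which manufactures a dormant arm and contradicts (ii). Only after (i)--(v) are secured does the paper deploy the decomposition identity, at which point the $S_y$ are genuinely disjoint trees and your Jensen argument for $\rho_c\equiv\zeta_c/\zeta$ goes through cleanly.

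So: keep the identity and your handling of (vi)--(viii), but prove (i) directly by perturbation and derive (iii)--(v) from it, rather than the other way around.
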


\begin{proof}

(i) Suppose that $f\in \calF^M$, $\kf \geq 2$, and $m_{k+1} < m_{k}$ for some $k+1\leq \kf$. 
We will construct a function $\ul f \in \calF^M$ with $\beta(\ul f) > \beta(f)$. Objects related to $\ul f$ will be denoted as in Definition \ref{m5.1} but underlined, for example, $\ul x^k_j$'s will play the role of $x^k_j$'s for $\ul f$.

The idea of the construction is the following. First,
suppose without loss of generality that the functions $x^k_j$ and $x^{k+1}_i$ are labeled in such a way that
\begin{align}\label{f19.6}
 \{x^{k+1}_1(t_{k}),
x^{k+1}_2(t_{k}), \dots, x^{k+1}_{m_{k+1}}(t_{k})\} \subset \{ x^k_2(  t_k), x^k_3(  t_k), \dots, x^k_{m_k}(  t_k)\}.
\end{align}

Suppose that $\eps \in (0, t_k - t_{k-1})$ is small and let $\ul t_k = t_k-\eps$.
We add an extra vertex to $\calG$ at $x^k_1(\ul t_k )$ so that we can change the values of $\ul z^m_j$'s at the time $\ul t_k$. We can add an extra vertex in view of Lemma \ref{f19.2}.

The function $\ul f$ will be such that $\ul f(t) = f(t)$ for $t\leq t_{k-1}$ and $t\geq t_{k+1}$. Moreover, ${\mathbf k}(\ul f) = \kf$ and $\ul t_j = t_j$ for $j\ne k$.
The number of active arms  of $\ul f$ on the interval $(\ul t_{k-1}, \ul t_k)$ will be the same as for $f$ on $( t_{k-1},  t_k)$ but $\ul f$ will have one more active arm on $(\ul t_{k}, \ul t_{k+1})$ than the number of active arms of $f$ on $( t_{k},  t_{k+1})$. 
In symbols, $\ul m_k = m_k $ and $ \ul m_{k+1} = m_{k+1} +1$.

We set
\begin{align*}
\ul x^k_j (\ul t_{k-1}) &=  x^k_j (t_{k-1}), \qquad j = 1, \dots, m_k, \\ 
\ul x^k_1 (\ul t_{k}) &=  x^k_1 (t_{k}-\eps),  \\ 
\ul x^k_j (\ul t_{k}) &=  x^k_j (t_{k}), \qquad j = 2, \dots, m_k, \\ 
\ul x^{k+1}_j (\ul t_{k}) &=  x^{k+1}_j (t_{k}), \qquad j = 1, \dots, m_{k+1}, \\ 
\ul x^{k+1}_j (\ul t_{k+1}) &=  x^{k+1}_j (t_{k+1}), \qquad j = 1, \dots, m_{k+1}, \\ 
\ul x^{k+1}_{\ul m_{k+1}} (\ul t_{k}) &=  x^k_1 (t_{k}-\eps), \\ 
\ul x^{k+1}_{\ul m_{k+1}} (\ul t_{k+1}) &=  x^k_1 (t_{k}).
\end{align*}
We removed  a short stretch of the range of $x^k_1$ from the $k$-th stage of $f$ and added this short interval as an extra arm to $\ul f$ at the $(k+1)$-st stage. 

In the following calculation, we will write $\ell = k+1$ so that we can reuse the formula later in the proof.
Let $a= 1 - \eps / (t_{k} - t_{k-1})$ so that $\eps = (t_{k} - t_{k-1})(1-a)$.
It is easy to check that
\begin{align}\label{f19.3}
\ul z^{k} _j &= z^{k} _j /a \quad \text{  for  } 2\leq j \leq m_{k},  \\
\ul z^{k} _1 &= z^{k} _1 , \nonumber \\
\ul z^{\ell} _j = \ul z^{k+1} _j &= z^{k+1} _j 
\frac{ t_{k+1} - t_{k} }{ t_{k+1} - t_{k} + \eps} 
=\frac{ z^{\ell}_j (t_{\ell} -  t_{\ell-1})}{t_{\ell} -  t_{\ell-1} + \eps}\quad \text{  for  } 1\leq j\leq \ul m_{k+1}-1, \nonumber \\
\ul z^{\ell} _{\ul m_{\ell}}
=\ul z^{k+1} _{\ul m_{k+1}}  &= z^k_1 \frac{\eps }{ t_{k+1} - t_{k} + \eps}
= z^k_1\frac{\eps}{t_{\ell} -  t_{\ell-1} + \eps}. \nonumber 
\end{align}

Since the functions $f$ and $\ul f$ agree outside $[t_{k-1}, t_\ell] = [t_{k-1}, t_k] \cup [t_{\ell-1}, t_{\ell}]$, we have
\begin{align}\label{f19.5}
\beta(\ul f) - \beta( f)&=
- (\ul t_{k} - \ul t_{k-1}) \sum_{j=1}^{\ul m_{k}} \ul z^{k}_j \log \ul z^{k}_j
 - (\ul t_{k+1} - \ul t_{k}) \sum_{j=1}^{\ul m_{k+1}} \ul z^{k+1}_j \log \ul z^{k+1}_j 
 \\
& \quad
+ ( t_{k} -  t_{k-1}) \sum_{j=1}^{m_{k}}  z^{k}_j \log  z^{k}_j
+( t_{k+1} -  t_{k}) \sum_{j=1}^{m_{k+1}}  z^{k+1}_j \log  z^{k+1}_j 
 \nonumber \\
 &=
- (\ul t_{k} - \ul t_{k-1}) \sum_{j=1}^{\ul m_{k}} \ul z^{k}_j \log \ul z^{k}_j
 - (\ul t_{\ell} - \ul t_{\ell-1}) \sum_{j=1}^{\ul m_{\ell}} \ul z^{\ell}_j \log \ul z^{\ell}_j 
\label{m5.7} \\
& \quad
+ ( t_{k} -  t_{k-1}) \sum_{j=1}^{m_{k}}  z^{k}_j \log  z^{k}_j
+( t_{\ell} -  t_{\ell-1}) \sum_{j=1}^{m_{\ell}}  z^{\ell}_j \log  z^{\ell}_j 
 \label{m5.8} \\
 &=  - ( t_{k} -  t_{k-1})a
z^{k} _1 \log(z^{k} _1 )  \nonumber \\
& \quad - ( t_{k} -  t_{k-1})a
\sum_{j=2}^{m_{k}}  (z^{k}_j/a) \log  (z^{k}_j /a)  \nonumber \\
& \quad
- ( t_{\ell} -  t_{\ell-1} + \eps) 
\sum_{j=1}^{m_{\ell}} \frac{ z^{\ell}_j (t_{\ell} -  t_{\ell-1})}{t_{\ell} -  t_{\ell-1} + \eps} \log  \frac{ z^{\ell}_j (t_{\ell} -  t_{\ell-1})}{t_{\ell} -  t_{\ell-1} + \eps} \nonumber \\
&\quad - ( t_{\ell} -  t_{\ell-1} + \eps) 
\frac{z^k_1\eps }{ t_{\ell} - t_{\ell-1} + \eps} \log \frac{z^k_1\eps }{ t_{\ell} - t_{\ell-1} + \eps} \nonumber \\
& \quad
+ ( t_{k} -  t_{k-1}) \sum_{j=1}^{m_{k}}  z^{k}_j \log  z^{k}_j
+( t_{\ell} -  t_{\ell-1}) \sum_{j=1}^{m_{\ell}}  z^{\ell}_j \log  z^{\ell}_j 
 \nonumber \\
 &=  - ( t_{k} -  t_{k-1})a
z^{k} _1 \log(z^{k} _1 ) \nonumber \\
& \quad + ( t_{k} -  t_{k-1}) \left(
z^{k}_1 \log  z^{k}_1 + \sum_{j=2}^{m_{k}}  z^{k}_j \log  a \right)  \nonumber \\
&\quad + ( t_\ell -  t_{\ell-1} ) 
\sum_{j=1}^{m_\ell} z^{\ell}_j \log \frac{ t_{\ell} -  t_{\ell-1}}{t_{\ell} -  t_{\ell-1} + \eps} \nonumber \\
&\quad - z^k_1\eps \log \frac{z^k_1 \eps }{ t_k - t_{k-1} + \eps} \nonumber \\
 &=   ( t_{k} -  t_{k-1})(1-a)
z^{k} _1 \log(z^{k} _1 ) \label{m5.2} \\
& \quad + ( t_{k} -  t_{k-1}) \sum_{j=2}^{m_{k}}  z^{k}_j \log  a  
\label{m5.3} \\
&\quad + ( t_\ell -  t_{\ell-1} ) 
\sum_{j=1}^{m_\ell} z^{\ell}_j \log \frac{ t_{\ell} -  t_{\ell-1}}{t_{\ell} -  t_{\ell-1} + \eps} \label{m5.4} \\
&\quad - z^k_1\eps \log \frac{z^k_1 \eps }{ t_k - t_{k-1} + \eps}.  \label{m5.5}
\end{align}
Recall that $f$ is given and fixed so all $z^k_j$'s, $t_k$'s, etc. are fixed. We  let  $\eps = (t_{k-1} - t_{k-2})(1-a)
\downarrow 0$ so that  $a\uparrow 1$.
Then the quantities on each of the lines \eqref{m5.2}-\eqref{m5.4} are of the order $O(\eps)$ for small $\eps > 0$. The  quantity in \eqref{m5.5} is  greater than $c \eps |\log \eps|$ for small $\eps >0$ so it dominates the other quantities. This shows that $\beta(\ul f) - \beta( f) >0$ for small $\eps >0$. Hence, $f\notin \calF^M_*$.

\medskip

(ii)
Suppose that $f\in \calF^M$ has a dormant arm, that is, there exist $i,j$ and $k$ such that $x^k_j(t_{k-1})\ne x^k_j(t_k)$ and
 $x^k_j(t_{k-1}) = x^{k-1}_i(t_{k-2})$. We assume without loss of generality that $j=1$ and set $\ell = k-1$, so that $x^k_1(t_{k-1})\ne x^k_1(t_k)$ and $x^k_1(t_{k-1}) = x^\ell_i(t_{\ell-1})$.

The strategy of the proof of part (ii) is the same as for part (i). We will construct a function $\ul f$ with $\beta(\ul f) > \beta( f) $ by shifting some growth from the $k$-th time interval to the $\ell$-th time interval.
Recall the notation conventions for objects related to $\ul f$ from part (i) of the proof.
Suppose that $\eps \in (0, t_k - t_{k-1})$, ${\mathbf k}(\ul f) = \kf$ and let
\begin{align*}
\ul t_\ell &= t_\ell + \eps  ,\\
\ul t_j &= t_j  \quad \text{  if  }j< \ell \text{  or  } j > k-1 .
\end{align*}
We set $\ul m_j = m_j $ for $j\ne \ell$ and $ \ul m_{\ell} = m_{\ell} +1$.
Let
\begin{align*}
\ul x^n_j (\ul t_{n-1}) =  x^n_j (t_{n-1})
\quad \text{and} \quad
\ul x^n_j (\ul t_{n}) =  x^n_j (t_{n}),
\end{align*}
for all $n=1, \dots, \kf$ and $ j = 1, \dots, m_n$, except that we let
$\ul x^k_1 (\ul t_{k-1}) =  x^k_1 (t_{k-1}+\eps)$.
We define
\begin{align*}
\ul x^{\ell}_{\ul m_{\ell}} (\ul t_{\ell-1}) =  x^k_1 (t_{k-1}), 
\quad \text{and} \quad 
\ul x^{\ell}_{\ul m_{\ell}} (\ul t_{\ell}) =  x^k_1 (t_{k-1}+\eps).
\end{align*}
We removed  a short stretch of the range of $x^k_1$ from the $k$-th stage of $f$ and added this short interval as an extra arm to $\ul f$ at the $\ell$-th stage. 

Let $a= 1 - \eps / (t_{k} - t_{k-1})$ so that $\eps = (t_{k} - t_{k-1})(1-a)$.
It is easy to check that
\begin{align}\label{m5.6}
\ul z^{k} _j &= z^{k} _j /a \quad \text{  for  } 2\leq j \leq m_{k},  \\
\ul z^{k} _1 &= z^{k} _1 , \nonumber \\
\ul z^{\ell} _j &= z^{\ell} _j 
\frac{ t_{\ell} - t_{\ell-1} }{ t_{\ell} - t_{\ell-1} + \eps} \quad \text{  for  } 1\leq j\leq \ul m_{\ell}-1, \nonumber \\
\ul z^{\ell} _{\ul m_{\ell}} &= z^k_1 \frac{\eps }{ t_{\ell} - t_{\ell-1} + \eps}, \nonumber \\
\ul z^{n} _j &= z^{n} _j, \qquad n\ne k,\ell, \quad j=1,\dots, m_n.
\nonumber
\end{align}
We note that \eqref{m5.6} matches \eqref{f19.3} so 
 so $\beta(\ul f) - \beta( f) $ is given by the  formula
in \eqref{m5.7}-\eqref{m5.8}. We have shown in part (i)  that this quantity is positive for small $\eps>0$ so 
 $\beta(\ul f) - \beta( f) >0$ for small $\eps >0$, and, therefore, $f\notin \calF^M_*$.

\medskip
(iii)
Suppose that
$f(0)$ contains a vertex of degree 1. Assume without loss of generality that $x^1_1(0)$ is a vertex of degree 1. 
Consider a function $\ul f$ constructed as follows. We assume that $\ul x^k_j (t) = x^k_j(t)$ for all $k\geq 2$, $1\leq j \leq m_k$ and $t\in (t_{k-1}, t_k)$, and also for $k=1$, $2\leq j \leq m_1$ and $t\in (0, t_1)$. We replace $x^1_1$ with two functions $\ul x^1_1$ and $\ul x^1_{m_1+1}$ defined for $t\in(0,t_1)$ as follows,
\begin{align*}
\ul x^1_1 (t) = x^1_1(t_1/2 - t/2),\qquad
  \ul x^1_{m_1+1} = x^1_1(t_1/2 + t/2).
\end{align*}
We have $\ul z^1_1 = \ul z^1_{m_1+1}= z^1_1/2$ so
\begin{align*}
\beta(\ul f) - \beta( f)
= - 2 t_1 (z^1_1/2) \log (z^1_1/2)
+  t_1 z^1_1 \log z^1_1>0
\end{align*}
and, therefore, $f\notin \calF^M_*$.

Next suppose that $f(0)$ does not contain a vertex of degree 1 but $f(t)$ does for some $0<t< \zeta$. Then there is $t_k < \zeta$ such that  $f(t_k)$ contains a vertex of degree 1  and $f(t)$ does not contain a vertex of degree 1 for $t < t_k$.
We can assume without loss of generality that $x^k_1(t_k)$ is a vertex of degree 1. Let $\ell = k+1$. Then it is easy to see that \eqref{f19.6} is satisfied. We can now follow the proof given in part (i) to conclude that $f\notin \calF^M_*$.

\medskip
(iv)
We will  prove that the number of disjoint components of $f(t)$ cannot decrease. 
Suppose that the number of disjoint components of $f(t)$ for $t\in (t_k, t_{k+1})$ is smaller than the number of disjoint components of $f(t)$ for $t\in (t_{k-1}, t_{k})$.  Some functions $x^k_i$ and $x^k_j$ must meet at time $t_k$, i.e., for some $i\ne j$, $x^k_i(t_k) = x^k_j(t_k)$. We can choose labels so that $x^k_1(t_k) = x^k_2(t_k)$.
Then  the functions $x^k_j$, $j=1,\dots, m_k$, and $x^{k+1}_i$, $i=1,\dots,m_{k+1}$, can be labeled in such a way that \eqref{f19.6} is satisfied. The rest of the proof given in part (i) applies verbatim so $f\notin \calF^M_*$.

It remains to show that the number of disjoint components of $f(0)$ is $M$. Suppose that this not the case. 
The proofs of part (i) and (iii) imply that an arm cannot stop growing before time $\zeta$. It follows that the set $f(\zeta-) := \bigcup_{t < \zeta} f(t)$ is non-empty.
We choose any point $v \in S\setminus f(\zeta-)$ and add it to $f(0)$ to create a new flooding. More formally, $g(t) = f(t) \cup \{v\}$ for all $t\leq \zeta$. Note that the number of sources of $g$ is equal to or less than $M$.  The flooding $g$ has a dormant arm so  $\beta(g)< \beta^*(S,M)$ according to part (ii). Since $\beta(f) =\beta(g)$, we conclude that $f\notin \calF^M_*$.

\medskip
(v)
Suppose that $f(t) $ contains a loop  for some $t< \zeta$. 
Then there is $t_k < \zeta$ such that  $f(t_k)$ contains a loop  and $f(t)$ does not contain a loop for $t < t_k$.  It follows that some functions $x^k_i$ and $x^k_j$ must meet at time $t_k$, i.e., for some $i\ne j$, $x^k_i(t_k) = x^k_j(t_k)$. We proceed as in part (iv) to conclude that $f\notin \calF^M_*$.

\medskip
(vi) 
First  suppose that $M=1$. According to Theorem \ref{m6.1} (i), $f(0)$ is the centroid of $S$. Let $d$ be the number of connected components of $S\setminus f(0)$. For $t\geq 0$, by part (v), $f(t)$ does not contain loops so the number of connected components of $f(t) \setminus f(0)$ is  equal to $d$. Let us call these sets $f_j(t)$, $j=1,\dots, d$.
If we denote the total length of all edges in $f_j(t)$ by $|f_j(t)|$, it is elementary to see that condition \eqref{m10.1} implies that for all $i,j=1,\dots, d$ and $s,t\in(0,\zeta]$,
\begin{align*}
\frac{|f_j(t)|}{|f_i(t)|} = \frac{|f_j(s)|}{|f_i(s)|}.
\end{align*}
In particular, this holds for all $t\in(0,\zeta]$ and $s=\zeta$. Hence, since $f(0)$ is the centroid of $S=f(\zeta)$, it must be the centroid of $f(t)$ for all $t\in(0,\zeta]$. 

\medskip

Next suppose that $M>1$. Recall that for $v\in f(0)$ and $t\in[0, \zeta)$, $S_v(t)$ denotes the connected component of $f(t)$ which contains $v$, and $S_v$ is the closure of $\bigcup_{1\leq t < \zeta} S_v(t)$. Let $f_v(t)= f(t) \cap S_v$. The function $t\to f_v(t)$ satisfies most conditions in Definition \ref{m5.1} of a flooding. It does not satisfy \eqref{f5.3}. We will construct a flooding $g_v$ of $S_v$ with a single source by transforming $f_v(t)$.

Let
\begin{align}\label{m21.1}
\ol z^{v,k} = \sum_{1\leq j \leq m_k, e^k_j \subset S_v} z^k_j.
\end{align}
Let $t^v_0 = 0$ and for $1\leq k \leq \kf$,
\begin{align*}
t^v_k = t^v_{k-1} + ( t_k - t_{k-1}) \ol z^{v,k}.
\end{align*}
Recall the notation from  Definition \ref{m5.1}. For $j$ such that $e^k_j \subset S_v$, let
$\wh z^{v,k}_j =  z^k_j /\ol z^{v,k}$.
Let $\zeta_v =  t^v_\kf$. For $t\in[t^v_{k-1}, t^v_k]$, let
$f_v(t) = f(t_{k-1} +(t - t^v_{k-1})/\ol z^{v,k})\cap S_v$.
It is easy to check that $f_v(t)$ is a flooding of $S_v$, with $t^v_k$'s playing the role of $t_k$'s, $\wh z^{v,k}_j$'s playing the role of $z^k_j$'s, and $\zeta_v$ playing the role of $\zeta$.

For a flooding $g$ of $S_v$,
we will write $\beta(g, S_v)$ to denote the quantity in \eqref{f5.1} relative to $g$ and $S_v$. Let
\begin{align*}
\beta(f\mid S_v)  &=
-\sum_{k=1}^{\kf}\left( (t_k - t_{k-1}) 
 \sum_{1\leq j \leq m_k, e^k_j \subset S_v} z^k_j \log z^k_j\right).
\end{align*}
We have
\begin{align}\label{m13.7}
\beta(f_v, S_v) &= 
-\sum_{k=1}^{\kf}\left( (t^v_k - t^v_{k-1}) 
 \sum_{1\leq j \leq m_k, e^k_j \subset S_v} 
\wh z^{v,k}_j \log \wh z^{v,k}_j\right)\\
&= 
-\sum_{k=1}^{\kf}\left( (t_k - t_{k-1}) \ol z^{v,k} 
 \sum_{1\leq j \leq m_k, e^k_j \subset S_v} ( z^k_j/\ol z^{v,k}) 
\log ( z^k_j/\ol z^{v,k})\right)\nonumber\\
&= 
-\sum_{k=1}^{\kf}\left( (t_k - t_{k-1}) 
 \sum_{1\leq j \leq m_k, e^k_j \subset S_v}  z^k_j \log  z^k_j\right)\nonumber\\
&\qquad+\sum_{k=1}^{\kf}\left( (t_k - t_{k-1}) 
 \sum_{1\leq j \leq m_k, e^k_j \subset S_v}  z^k_j 
\log \ol z^{v,k}\right)\nonumber\\
&= \beta(f\mid S_v)
+\sum_{k=1}^{\kf}\left( (t_k - t_{k-1}) 
  \ol z^{v,k} \log \ol z^{v,k}\right).\nonumber
\end{align}

Given a flooding $h$ of $S_v$, we  define a flooding $f^h$ of $S$ by 
\begin{align*}
f^h(t) 
= (f(t) \cap  S^c_v) )
\cup h( t^v_{k-1} + (t - t_{k-1})\ol z^{v,k} )
\end{align*}
for
$t \in [t _{k-1}, t_k]$ and $1\leq k \leq \kf$.
It is elementary to check that $(f^h)_v = h$.
Hence, we can apply \eqref{m13.7} to see that
\begin{align*}
\beta(f^h) &= \beta(h,S_v) - \sum_{k=1}^{\kf}\left( (t_k - t_{k-1}) 
  \ol z^{v,k} \log \ol z^{v,k}\right) + \beta(f^h\mid S_v^c)\\
&= \beta(h,S_v) - \sum_{k=1}^{\kf}\left( (t_k - t_{k-1}) 
  \ol z^{v,k} \log \ol z^{v,k}\right) + \beta(f\mid S_v^c).
\end{align*}
Note that only the term $\beta(h,S_v)$ depends on $h$ on the right hand side.
We see that $f_v\in \calF^1_*$ relative to $S_v$ because otherwise we could find a flooding $h \in \calF^1_*$ of $S_v$ with $\beta(h,S_v) > \beta(f_v,S_v)$, and then we would have $\beta(f^h) > \beta(f^{f_v}) = \beta(f)$, which is impossible because $f\in \calF^M_*$.

Since $f_v\in \calF^1_*$ relative to $S_v$, the first part of the proof of part (vi), for $M=1$, implies that $v$ is the centroid of $f_v(t)$ for all $t$. Part (vi) of the theorem follows easily.

\medskip
(vii) 
First assume that $y_1 = y_2$.
We will use the notation from part (vi) of the proof.
It follows from \eqref{m13.7} that
\begin{align}
\beta(f) &= \sum_{v\in f(0)} \beta(f\mid S_v)\nonumber\\
&= \sum_{v\in f(0)}
\left( \beta(f_v) - \sum_{k=1}^{\kf}\left( (t_k - t_{k-1}) 
  \ol z^{v,k} \log \ol z^{v,k}\right)\right)\nonumber \\
&= \sum_{v\in f(0)} \beta(f_v)
 - \sum_{k=1}^{\kf}\left( (t_k - t_{k-1}) \sum_{v\in f(0)}
  \ol z^{v,k} \log \ol z^{v,k}\right). \label{m21.2}
\end{align}

We may construct a new flooding $\wt f$ of $S$ by replacing $\ol z^{v,k}$'s with $\wt z^{v,k}$'s satisfying $\wt z^{v,k}>0$ for all $v$ and $k$, $\sum_{v\in f(0)} \wt z^{v,k}=1$ for all $k$, and 
$\sum_{1\leq k \leq \kf} (t_k- t_{k-1})\wt z^{v,k}=\zeta_v$ for all $v\in f(0)$. More formally, there exists a flooding $\wt f$ of $S$ such that $\wt f(0) = f(0)$, the floodings of $S_v$'s induced by $\wt f$ are the same $f_v$'s as for $f$, and the quantities in \eqref{m21.1} relative to $\wt f$ are $\wt z^{v,k}$'s.
We will find the family of $\wt z^{v,k}$'s satisfying the above conditions which maximizes the last line in \eqref{m21.2}.

Note that  $\sum_{v\in f(0)} \beta(\wt f_v)= \sum_{v\in f(0)} \beta(f_v)$, by assumption. The problem of maximizing the remaining sum is mathematically equivalent to the problem of maximizing the entropy $\beta$ of a flooding for a star graph with edge lengths $\{\zeta_v, v\in f(0)\}$, and only one source at the central vertex of the star. The methods used in previous proofs show that the optimal $\wt z^{v,k}$'s are given by
$\wt z^{v,k} = \zeta_v / \zeta$ for all $v$ and $k$. Since $f$ is assumed to be in $\calF^M_*$, we must have $\ol z^{v,k} =\wt z^{v,k}= \zeta_v / \zeta$ for all $v$ and $k$. This shows that we can apply Theorem \ref{m6.1} (ii) to $f_v$'s and, therefore, part (vii) of the theorem follows in the case $y_1 = y_2$.

Next suppose that $y_1 \ne y_2$. Consider the case when there exists $v_1\in S_{y_1} \cap S_{y_2}$. Assume that $x^\kf_i(t)$ and $x^\kf_j(t)$ approach $v_1$ as $t\to \zeta$, and these functions represent an arm of $S_{y_1}$ and an arm of $S_{y_2}$. Then $z^\kf_i = z^\kf_j$, according to Lemma \ref{m21.3}. This and the fact that \eqref{m15.11} holds in the case when $y_1=y_2$ show that \eqref{m15.11} holds also in the present case. 

Since $S$ is connected, for any $y_1, y_2 \in f(0)$, there exist points $\wh y_1 = y_1, \wh y_2, \wh y_3, \dots, \wh y_j = y_2$ in $f(0)$, such that $S_{\wh y_i} \cap S_{\wh y_{i+1}} \ne \emptyset$ for all $i$.
This implies that the claim proved in the previous paragraph extends to all 
$y_1, y_2 \in f(0)$.
\end{proof}

\begin{corollary}\label{m21.4}
Suppose that $M\geq 2$, $f \in \calF^M_*$, the functions $x^k_{j_0}, x^{k+1}_{j_1}, \dots, x^{k+n}_{j_n}$ take values in the same edge of $S$, and
$x^{k+i}_{j_i}(t_{k+i}) = x^{k+i+1}_{j_{i+1}}(t_{k+i})$ for $i=0, \dots, n-1$. Then $z^k_{j_0}= z^{k+1}_{j_1}= \dots= z^{k+n}_{j_n}$.
\end{corollary}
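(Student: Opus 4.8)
The plan is to reduce to the single‑source situation already analysed in Theorem~\ref{m6.1}(ii). Since $M\ge 2$ and $f\in\calF^M_*$, the points $x^{k+i}_{j_i}(t)$ trace, across the stages $k,k+1,\dots,k+n$, a connected path lying in $\prt f(t)$; by Theorem~\ref{f22.1}(iv) the number of components of $f(t)$ is constant for $t<\zeta$, so this path always lies on the boundary of one and the same component, which grows monotonically to a set $S_y$ with $y\in f(0)$ that is, by Theorem~\ref{f22.1}(v), a metric tree. Restricting $f$ to $S_y$ and rescaling time inside each stage exactly as in the proof of Theorem~\ref{f22.1}(vi) produces a flooding $f_y$ of $S_y$ with single source $y$ and, as shown there, $f_y\in\calF^1_*$ relative to $S_y$; its stages are in bijection with those of $f$, and none is degenerate, since by the proofs of Theorem~\ref{f22.1}(i)--(iv) no arm stops growing before time $\zeta$, hence every component carries an active arm at every stage. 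Under this correspondence the given chain becomes a chain of arms of $f_y$, one per consecutive stage, all on the same edge $e$, each starting where the previous one ended. Writing $\ol z^{y,m}$ for the quantity in \eqref{m21.1} and $\wh z^{y,m}_j=z^m_j/\ol z^{y,m}$ for the induced $S_y$‑speeds, the proof of Theorem~\ref{f22.1}(vii) shows $\ol z^{y,m}$ is the same for all $m$ (equal to $|S_y|/\zeta$). Hence the assertion $z^{k}_{j_0}=\dots=z^{k+n}_{j_n}$ is equivalent to $\wh z^{y,k}_{j_0}=\dots=\wh z^{y,k+n}_{j_n}$, and it suffices to prove the latter.

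For this I apply Theorem~\ref{m6.1}(ii) to $f_y\in\calF^1_*(S_y)$. Let $\mu=|S_y|$ be the total edge length of $S_y$, let $\sigma_m$ be the $S_y$‑time at the end of the stage of $f_y$ corresponding to the $m$‑th stage of $f$ (so $\sigma_0=0$), and for $0\le i\le n-1$ let $p_i$ be the common point $x^{k+i}_{j_i}(t_{k+i})=x^{k+i+1}_{j_{i+1}}(t_{k+i})$, together with $p_{-1}=x^{k}_{j_0}(t_{k-1})$ and $p_n=x^{k+n}_{j_n}(t_{k+n})$; all of these lie on $e$. During stage $k+i$ the chain arm advances monotonically along $e$ from $p_{i-1}$ to $p_i$, away from $y$, into a part of $S_y$ not previously flooded. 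Writing $\ell_i$ for the total edge length of the component of $S_y\setminus\{\xi\}$ not containing $y$ in the limit as $\xi\to p_i$ along $e$ (this is exactly the limit appearing when the time in Theorem~\ref{m6.1}(ii) tends to a stage boundary), the length accounting gives $\ell_{i-1}=\ell_i+\dist(p_{i-1},p_i)$ and the bookkeeping of unflooded length gives $\mu-\sigma_{k+i}=(\mu-\sigma_{k+i-1})-(\sigma_{k+i}-\sigma_{k+i-1})$. By Theorem~\ref{m6.1}(ii), letting the time tend to the start of stage $k+i$ yields $\wh z^{y,k+i}_{j_i}=\ell_{i-1}/(\mu-\sigma_{k+i-1})$; since the arm moves at this constant speed for $S_y$‑time $\sigma_{k+i}-\sigma_{k+i-1}$ we get $\dist(p_{i-1},p_i)=\wh z^{y,k+i}_{j_i}(\sigma_{k+i}-\sigma_{k+i-1})$, hence $\ell_i=\wh z^{y,k+i}_{j_i}(\mu-\sigma_{k+i})$, and applying the formula of Theorem~\ref{m6.1}(ii) at the start of stage $k+i+1$ (where the next arm sits at $p_i$) gives $\wh z^{y,k+i+1}_{j_{i+1}}=\ell_i/(\mu-\sigma_{k+i})=\wh z^{y,k+i}_{j_i}$. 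Induction on $i$ now gives $\wh z^{y,k}_{j_0}=\dots=\wh z^{y,k+n}_{j_n}$, which completes the proof.

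The delicate points, all reducible to earlier results, are: (a) that the chain arm indeed moves away from $y$ and into territory belonging to $S_y$, so that $\ell_{i-1}=\ell_i+\dist(p_{i-1},p_i)$ is legitimate — this rests on Theorem~\ref{f22.1}(iv),(v) (no merging of components and no loops before $\zeta$), together with the one‑direction linear motion of each arm from Definition~\ref{m5.1}, and on reading $\ell_i$ as a forward limit along $e$ when $p_i$ is a vertex; (b) the verification that the stages of $f$ and $f_y$ correspond and none is degenerate, and that the case $k=1$ with $p_{-1}\in f(0)$ is covered (the computation is unchanged because $\sigma_0=0$); and (c) the identity that $\ol z^{y,m}$ is independent of $m$, which is precisely what is extracted in the proof of Theorem~\ref{f22.1}(vii). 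I expect step~(a) — making the monotone‑advance and length‑accounting rigorous so that the telescoping identity is valid at every junction, including at vertices — to be the main thing to write out with care; the remainder is formal manipulation of the formula of Theorem~\ref{m6.1}(ii).
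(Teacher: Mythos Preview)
Your proof is correct, and at its core it is the same argument the paper has in mind: the speed of an arm equals the length of its ``downstream'' subtree divided by the total remaining length, and this quantity is continuous across a stage boundary that occurs in the interior of an edge. The paper's proof is a single line---``follows easily from Theorem~\ref{f22.1}(vii)''---because (vii), combined with $\sum_j z^k_j=1$, already gives $z^{k+i}_{j_i}=\zeta_{v}/(\zeta-t)$ directly in the multi-source setting, so one can match the two one-sided limits at each junction $p_i$ without any further reduction. You instead pass to the induced single-source flooding $f_y$ on $S_y$ and invoke Theorem~\ref{m6.1}(ii), which requires importing from the proofs of (vi)--(vii) both that $f_y\in\calF^1_*(S_y)$ and that $\ol z^{y,m}=|S_y|/\zeta$ is independent of $m$; once you have those, your telescoping computation is exactly the same ratio-preservation that (vii) encodes. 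So your route is a slight detour---you re-derive a piece of (vii) rather than quoting it---but nothing is wrong, and your explicit handling of the forward limit at a vertex $p_i$ is a point the one-line proof leaves implicit.
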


\begin{proof}
The corollary follows easily from Theorem \ref{f22.1} (vii).
\end{proof}

Suppose that the metric graph $S$ is a tree and $x\in S$. For $y\in S$, $y\ne x$, let $ D^x_y$ be the 
set of all $z\in S$ such that the geodesic from $x$ to $z$ contains $y$.

We will say $\{S_j\}_{1\leq j \leq k}$ is a partition of a metric graph $S$ if every $S_j$ is a closed metric tree, $\bigcup_{1\leq j \leq k} S_j = S$, and interiors of $S_j$'s are non-empty and disjoint.

\begin{proposition}\label{m14.1}
(i) If $S$ is a metric tree and $v\in S$  then 
\begin{align}\label{a2.5}
\sup\{\beta(f): f\in \calF^1, f(0) =v\}=  -\zeta 
+\zeta \log \zeta- \int_S \log |D^v_y| dy .
\end{align}

(ii) Suppose that $S$ is a metric tree, $M>1$, 
and $v_1, v_2, \dots, v_M$ are fixed distinct points in $S$.
Then
\begin{align}\label{m29.2}
\sup_f \beta(f)= 
\sup_{S_1, S_2, \dots, S_M}
 \left( -\zeta +\zeta \log \zeta - \sum_{j=1}^M \int_{S_j} \log |D^{v_j}_y| dy \right),
\end{align}
where the supremum on the left hand side is taken over functions $f\in \calF^M$ such that $f(0) = \{v_1, v_2, \dots, v_M\}$. The supremum on the right hand side  is taken over all  partitions $\{S_j\}_{1\leq j \leq M}$ of $S$, such that each $v_j$ is in the interior of $S_j$.
\end{proposition}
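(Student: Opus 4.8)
The plan is to prove everything from a single change-of-variables identity for $\beta$ together with one application of Jensen's inequality, and then to obtain part (ii) from part (i) via the decomposition of $\beta$ already developed in the proof of Theorem~\ref{f22.1}(vi)--(vii).

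\textit{Part (i).} For a flooding $f$ with $f(0)=\{v\}$, let $\tau(p)=\inf\{t\ge 0:p\in f(t)\}$ be the time at which $p$ is reached. Since $S$ is a tree and $M=1$, every set $f(t)$ is connected and contains $v$, so $\tau$ is continuous, vanishes at $v$, and along each edge it is piecewise linear and monotone on the geodesic away from $v$; hence $\tau'$ (the derivative along that geodesic) exists off a finite set, is positive, and the speed of the arm located at a point $p$ equals $1/\tau'(p)$. Applying the change of variables $\int_S g\,d\Leb=\int_0^\zeta\big(\sum_{p:\tau(p)=t}g(p)/\tau'(p)\big)\,dt$ with $g=\log\tau'$, and comparing with \eqref{f25.1}, one gets $\beta(f)=\int_S\log\tau'(p)\,d\Leb(p)$. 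I would then record two ``rate one'' facts: first $\Leb(f(t))=t$, equivalently $\sum_{p:\tau(p)=t}1/\tau'(p)=1$, which is exactly \eqref{f5.3}; and second, since $S$ is a tree, $S\setminus f(t)$ is the disjoint union over $p\in\prt f(t)$ of the sets $D^v_p\setminus\{p\}$, so that $\sum_{p:\tau(p)=t}|D^v_p|=\zeta-t$.

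Now I would apply these to the auxiliary integrand $h(p)=\tau'(p)\,|D^v_p|/(\zeta-\tau(p))$: the same change of variables and the second rate-one fact give $\int_S h\,d\Leb=\int_0^\zeta\frac{1}{\zeta-t}\big(\sum_{p:\tau(p)=t}|D^v_p|\big)\,dt=\zeta$, so by concavity of $\log$ with respect to the probability measure $\zeta^{-1}\,d\Leb$ on $S$ we get $\int_S\log h\,d\Leb\le\zeta\log 1=0$. Since also $\int_S\log(\zeta-\tau(p))\,d\Leb(p)=\int_0^\zeta\log(\zeta-t)\,dt=\zeta\log\zeta-\zeta$ by the first rate-one fact, combining these yields
\begin{align*}
\beta(f)=\int_S\log\tau'\,d\Leb\le\int_S\log(\zeta-\tau)\,d\Leb-\int_S\log|D^v_p|\,d\Leb=-\zeta+\zeta\log\zeta-\int_S\log|D^v_y|\,dy,
\end{align*}
which is the right side of \eqref{a2.5}. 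Equality in Jensen forces $h\equiv 1$, i.e. the arm speed at $p$ equals $|D^v_p|/(\zeta-\tau(p))$; this is precisely condition \eqref{m10.1} with $v$ in place of the centroid, and the unique flooding satisfying it is constructed exactly as in the proof of Theorem~\ref{m6.1}(ii). Hence the supremum in \eqref{a2.5} is attained and equals the stated quantity.

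\textit{Part (ii).} The set $\{f\in\calF^M:f(0)=\{v_1,\dots,v_M\}\}$ is closed in the compact set $\calF^{\le M}$, so by Lemma~\ref{lem:beta_cont} a maximiser $f^*$ exists. Rerunning the perturbation arguments from the proof of Theorem~\ref{f22.1}(iv) (which never move $f(0)$, and which may invoke Lemma~\ref{f19.2} to insert vertices) shows that $f^*$ has exactly $M$ components for every $t<\zeta$, that $v_j\in\mathrm{int}(S_{v_j})$, and that each induced single-source flooding $f^*_{v_j}$ of the metric tree $S_{v_j}$ is $\beta(\,\cdot\,,S_{v_j})$-optimal; with $\zeta_j:=\Leb(S_{v_j})$ and $\overline z^{v_j,k}$ as in \eqref{m21.1}, the family $\{S_{v_j}\}$ is then an admissible partition for the right side of \eqref{m29.2}. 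For any such non-merging flooding the decomposition \eqref{m13.7}--\eqref{m21.2} gives $\beta(f)=\sum_j\beta(f_{v_j},S_{v_j})-\sum_k(t_k-t_{k-1})\sum_j\overline z^{v_j,k}\log\overline z^{v_j,k}$; I would bound the first sum using part (i) applied to each $S_{v_j}$, and bound the second sum by the elementary Lagrange/concavity fact that, among non-negative arrays with $\sum_j\overline z^{v_j,k}=1$ for all $k$ and $\sum_k(t_k-t_{k-1})\overline z^{v_j,k}=\zeta_j$ for all $j$, the quantity $-\sum_k(t_k-t_{k-1})\sum_j\overline z^{v_j,k}\log\overline z^{v_j,k}$ is maximised by $\overline z^{v_j,k}\equiv\zeta_j/\zeta$, with value $\zeta\log\zeta-\sum_j\zeta_j\log\zeta_j$ (equivalently, this is part (i) for the star metric graph with edge lengths $\zeta_1,\dots,\zeta_M$). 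Adding, the $\zeta_j\log\zeta_j$ terms cancel and $\beta(f)\le-\zeta+\zeta\log\zeta-\sum_j\int_{S_{v_j}}\log|D^{v_j}_y|\,dy$, which is at most the supremum over partitions. For the reverse inequality, given an admissible partition $\{S_j\}$ I would run the part-(i)-optimal single-source floodings of the $S_j$ in parallel, allotting the constant rate $\zeta_j/\zeta$ to $S_j$; this is a flooding of $S$ with sources $\{v_j\}$, and by the same decomposition with $\overline z^{v_j,k}\equiv\zeta_j/\zeta$ its $\beta$ equals $-\zeta+\zeta\log\zeta-\sum_j\int_{S_j}\log|D^{v_j}_y|\,dy$ exactly. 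Taking suprema over partitions then gives \eqref{m29.2}.

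\textit{Expected obstacles.} The main work is in the bookkeeping for part (i): establishing $\beta(f)=\int_S\log\tau'\,d\Leb$ and the identity $\sum_{p:\tau(p)=t}|D^v_p|=\zeta-t$ rigorously requires handling the finitely many times $t_k$ and vertices at which $\tau$ fails to be differentiable (and the convention $0\log 0=0$ for dormant arms), which is routine since everything in sight is piecewise linear. The more delicate point is in part (ii): one must check that the perturbation arguments of Theorem~\ref{f22.1} still apply when $f(0)$ is held fixed, so that the constrained maximiser $f^*$ is genuinely non-merging with $v_j\in\mathrm{int}(S_{v_j})$ and with optimal internal floodings; this is where I expect the argument to need the most care. (In the statement, $|D^{v_j}_y|$ on the right of \eqref{m29.2} is to be read as the length computed inside the subtree $S_j$, which is what the decomposition naturally produces.)
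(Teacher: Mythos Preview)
Your proposal is correct, and for part (i) it takes a genuinely different route from the paper.

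\textbf{Part (i): comparison.} The paper proceeds \emph{computationally}: it first invokes Theorem~\ref{f22.1}(vii) to pin down the speeds $z^k_j$ of the constrained maximiser, writes $\int_S\log|D^v_y|\,dy$ as a sum over the arms $S^k_j$, and then performs a somewhat opaque telescoping manipulation (formulae \eqref{a1.3}--\eqref{a2.3}) that eventually collapses to $-\zeta+\zeta\log\zeta-\beta(f)$. By contrast, your argument is \emph{variational}: the identity $\beta(f)=\int_S\log\tau'\,d\Leb$ holds for \emph{every} single-source flooding on a tree, and then a single application of Jensen to $h(p)=\tau'(p)\,|D^v_p|/(\zeta-\tau(p))$ gives the upper bound directly, with the equality case \emph{producing} the optimal flooding (namely the one satisfying \eqref{m10.1} with $v$ in place of the centroid). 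This is cleaner and more conceptual; it also makes transparent why the right-hand side of \eqref{a2.5} is the answer --- it is exactly what survives after the two ``rate one'' identities $\sum_{p:\tau(p)=t}1/\tau'(p)=1$ and $\sum_{p:\tau(p)=t}|D^v_p|=\zeta-t$ are fed into Jensen. The paper's approach buys you nothing extra; your approach avoids the need to first establish the form of the optimiser via Theorem~\ref{f22.1}(vii), and it handles dormant arms gracefully (they only make $\int_S h\,d\Leb$ smaller, so the Jensen bound persists).

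\textbf{Part (ii).} Here your argument is essentially the paper's sketch, executed in full: the decomposition \eqref{m13.7}--\eqref{m21.2}, the bound on $-\sum_k(t_k-t_{k-1})\sum_j\overline z^{v_j,k}\log\overline z^{v_j,k}$ via concavity of entropy (which you correctly identify as the star-graph instance of part (i)), and the explicit construction for the reverse inequality. Your flag about the constrained maximiser $f^*$ being non-merging is exactly right --- the perturbations in Theorem~\ref{f22.1}(i)--(v) never touch $f(0)$, so they apply verbatim to the maximiser over $\{f\in\calF^M:f(0)=\{v_1,\dots,v_M\}\}$; the paper tacitly assumes this too, both here and in its own proof of part~(i). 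Your parenthetical about reading $|D^{v_j}_y|$ relative to $S_j$ is also the correct interpretation.
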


\begin{proof}
(i)
Let $v$ be a fixed point in $S$.
We define a partial order on $S$ by declaring that $y \prec z$ if and only if $y$ lies on the geodesic between $v$ and $z$.
Suppose that $f$ is such that $f(0)=v$ and  $\beta(f) = \sup\{\beta(f): f\in \calF^1, f(0) =v\}$. 
Let $S^k_j = x^k_j((t_{k-1} , t_k))$ for $1\leq k \leq \kf$ and $1\leq j \leq m_k$ and note that $|S^k_j| = z^k_j (t_k - t_{k-1})$. We will write $S^{k_1} _{j_1} \prec S^{k_2} _{j_2}$ if  $x^{k_1} _{j_1}(s) \prec x^{k_2} _{j_2}(t)$ for all
$s\in (t_{k_1-1}, t_{k_1})$ and $t\in (t_{k_2-1}, t_{k_2})$.
Let 
\begin{align*}
\gamma^{k_1}_{j_1} = \sum_{(k_2,j_2): S^{k_1} _{j_1} \prec S^{k_2} _{j_2}}
\left|S^{k_2} _{j_2}\right|
=
\sum_{(k_2,j_2): S^{k_1} _{j_1} \prec S^{k_2} _{j_2}} 
z^{k_2}_{j_2} (t_{k_2} - t_{k_2-1}) .
\end{align*}

We have
\begin{align}\label{a1.3}
\int_S &\log |D^v_y| dy
=
\sum_{1\leq k \leq \kf} \sum_{1\leq j \leq m_k}
\int_0^{|S^k_j|}\log (u + \gamma^k_j) du\\
&=
\sum_{1\leq k \leq \kf} \sum_{1\leq j \leq m_k}
\left((|S^k_j| + \gamma^k_j) \log (|S^k_j| + \gamma^k_j)-(|S^k_j| + \gamma^k_j) 
-  \gamma^k_j \log \gamma^k_j + \gamma^k_j\right)\nonumber \\
&= -\zeta +
\sum_{1\leq k \leq \kf} \sum_{1\leq j \leq m_k}
\left((|S^k_j| + \gamma^k_j) \log (|S^k_j| + \gamma^k_j)
-  \gamma^k_j \log \gamma^k_j \right) \nonumber\\
&=-\zeta +
\sum_{1\leq k \leq \kf} \sum_{1\leq j \leq m_k}
|S^k_j|
\log \left(\frac{|S^k_j| + \gamma^k_j}
{ \sum_{1\leq i \leq m_k} |S^k_i| + \gamma^k_i  }\right)
\nonumber \\
&\quad +
\sum_{1\leq k \leq \kf} \sum_{1\leq j \leq m_k}
\left(|S^k_j|  
\log \left(\sum_{1\leq i \leq m_k} |S^k_i| + \gamma^k_i  \right)
+\gamma^k_j
\log \left(|S^k_j| + \gamma^k_j\right)
-  \gamma^k_j \log \gamma^k_j\right).\nonumber
\end{align}
Theorem \ref{f22.1} (vii) implies that $|S^k_j| / \gamma^k_j = |S^k_i| / \gamma^k_i$ for all $1 \leq i,j \leq m_k$, so for all $j$ in this range,
\begin{align*}
\frac{|S^k_j| + \gamma^k_j}{\gamma^k_j}
= 
\frac{\sum_{1\leq i \leq m_k} (|S^k_i| + \gamma^k_i)}
{\sum_{1\leq i \leq m_k}  \gamma^k_i},
\end{align*}
and, therefore,
\begin{align*}
\gamma^k_j
\log \left(|S^k_j| + \gamma^k_j\right)
-  \gamma^k_j \log \gamma^k_j
&= \gamma^k_j
\log \left( \frac{|S^k_j| + \gamma^k_j}{\gamma^k_j} \right)
= \gamma^k_j
\log \left( \frac{\sum_{1\leq i \leq m_k} (|S^k_i| + \gamma^k_i)}
{\sum_{1\leq i \leq m_k} \gamma^k_i} \right)\\
&= \gamma^k_j
\log \left(\sum_{1\leq i \leq m_k} (|S^k_i| + \gamma^k_i) \right)
- \gamma^k_j
\log \left( \sum_{1\leq i \leq m_k} \gamma^k_i \right).
\end{align*}
We use this and \eqref{a1.3} to see that
\begin{align}\label{a2.1}
\int_S &\log |D^v_y| dy
=-\zeta +
\sum_{1\leq k \leq \kf} \sum_{1\leq j \leq m_k}
|S^k_j|
\log \left(\frac{|S^k_j| + \gamma^k_j}
{ \sum_{1\leq i \leq m_k} |S^k_i| + \gamma^k_i  }\right) \\
&\quad +
\sum_{1\leq k \leq \kf} \sum_{1\leq j \leq m_k}
|S^k_j|  
\log \left(\sum_{1\leq i \leq m_k} |S^k_i| + \gamma^k_i  \right)
\nonumber  \\
&\quad +
\sum_{1\leq k \leq \kf} \sum_{1\leq j \leq m_k}
\left(\gamma^k_j
\log \left(\sum_{1\leq i \leq m_k} (|S^k_i| + \gamma^k_i) \right)
- \gamma^k_j
\log \left( \sum_{1\leq i \leq m_k} \gamma^k_i \right)\right)\nonumber\\
&=-\zeta +
\sum_{1\leq k \leq \kf} \sum_{1\leq j \leq m_k}
|S^k_j|
\log \left(\frac{|S^k_j| + \gamma^k_j}
{ \sum_{1\leq i \leq m_k} |S^k_i| + \gamma^k_i  }\right) \nonumber \\
&\quad +
\sum_{1\leq k \leq \kf} \sum_{1\leq j \leq m_k}
(|S^k_j| + \gamma^k_j)  
\log \left(\sum_{1\leq i \leq m_k} |S^k_i| + \gamma^k_i  \right)
\nonumber  \\
&\quad -
\sum_{1\leq k \leq \kf} \sum_{1\leq j \leq m_k}
 \gamma^k_j
\log \left( \sum_{1\leq i \leq m_k} \gamma^k_i \right).\nonumber
\end{align}
We have $\gamma^\kf _j = 0$ for all $j$, so
\begin{align}\label{a1.2}
\sum_{1\leq j \leq m_{\kf}}
\gamma^{\kf}_j \log \gamma^{\kf}_j
=0.
\end{align}
Note that, for $2 \leq k \leq \kf$,
\begin{align*}
\sum_{1\leq j \leq m_k}
(|S^k_j| + \gamma^k_j)  
\log \left(\sum_{1\leq i \leq m_k} |S^k_i| + \gamma^k_i  \right)
=
 \sum_{1\leq j \leq m_{k-1}}
 \gamma^{k-1}_j
\log \left( \sum_{1\leq i \leq m_{k-1}} \gamma^{k-1}_i \right).
\end{align*}
This and \eqref{a1.2} imply that all sums on the last two lines of \eqref{a2.1} cancel except for
\begin{align*}
\sum_{1\leq j \leq m_1}
(|S^k_1| + \gamma^1_j)  
\log \left(\sum_{1\leq i \leq m_1} |S^1_i| + \gamma^1_i  \right)
= \zeta \log \zeta.
\end{align*}
Hence, \eqref{a2.1} is now reduced to
\begin{align}\label{a2.2}
\int_S \log |D^v_y| dy
&=-\zeta +\zeta \log \zeta +
\sum_{1\leq k \leq \kf} \sum_{1\leq j \leq m_k}
|S^k_j|
\log \left(\frac{|S^k_j| + \gamma^k_j}
{ \sum_{1\leq i \leq m_k} |S^k_i| + \gamma^k_i  }\right) .
\end{align}
We invoke Theorem \ref{f22.1} (vii) once again to claim that
\begin{align}\label{a2.3}
\frac{|S^k_j| + \gamma^k_j}
{ \sum_{1\leq i \leq m_k} |S^k_i| + \gamma^k_i  }
= z^k_j.
\end{align}
Recall that $|S^k_j| = (t_k - t_{k-1}) z^k_j$. We combine this observation with \eqref{a2.2}-\eqref{a2.3} to conclude that 
\begin{align*}
\int_S \log |D^v_y| dy
&=-\zeta +\zeta \log \zeta +
\sum_{1\leq k \leq \kf} (t_k - t_{k-1}) \sum_{1\leq j \leq m_k}
z^k_j
\log z^k_j \\
&= -\zeta +\zeta \log \zeta  -\beta(f) .
\end{align*}
This proves part (i) of the proposition.

(ii)
We will only sketch the proof. For fixed $S_1, S_2, \dots, S_M$, the rates at which the length covered by the optimal flooding in each of these sets are proportional to the total lengths of these graphs, i.e., $|S_1|, |S_2|, \dots, |S_M|$, by Theorem \eqref{f22.1} (vii). Hence, we can optimize $f$ separately in each $S_j$. As a result, we obtain a formula analogous to 
\eqref{a2.5} for each $S_j$ and then we take the sum. The fact that we end up with an extra term $-\zeta + \zeta \log \zeta$ which does not depend on $M$ can be best understood as a normalization corresponding to the denominator $(n\zeta)!$ in the formula for $p_n$ in Remark \ref{a2.6} below.
\end{proof}

\begin{remark}\label{a2.6}
The proof of Theorem \eqref{m14.1} (i) consists of a transformation of the right hand side of \eqref{a2.5} into the formula \eqref{f5.1} defining $\beta(f)$,
with help from Theorem \ref{f22.1} (vii), which provides information about the speed of arm growth for the optimal flooding. Our calculation is not very illuminating so let us point out that \eqref{a2.5} can be derived from formula (6.1) in \cite{twinp}. 
Then the formula in \eqref{a2.5} arises in a natural way as the limit of  approximating Riemann sums.
In other words, one can apply \cite[(6.1)]{twinp} to subdivision graphs and then  pass to the limit with the number of vertices. This alternative argument is based on the observation that if $p_n$ is the probability that a random labeling has a single peak at $v$ in the $n$-th stage subdivision graph then $p_n \approx \exp(n\beta(f))/(n\zeta)!$, for $f\in \calF^1_*$. The rigorous version of this alternative proof seems to be longer than the proof presented above.
\end{remark}

\section{Examples}\label{sec:example}

We will say that a metric tree $S$ is a $(d+1)$-regular tree of depth $n\geq 1$ for $d\geq 2$ if all vertices have degree $d+1$ or 1, all 
edges have length 1 and there exists $v_1\in S$ (the root) such that 
the distance from $v_1$ to every leaf (vertex of degree 1) is equal to $n$.

\begin{theorem}

Suppose that $S$ is the $(d+1)$-regular tree of depth $n\geq 2$, for some $d\geq 2$. If $f \in \calF^2_*$ then $f(0)$ contains the root  of $S$ and an adjacent vertex. 
\end{theorem}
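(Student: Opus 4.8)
The plan is to reduce the problem to a one-dimensional optimization over the location of the two sources, using the structural results already established, and then to exploit the symmetry of the regular tree. By Theorem~\ref{f22.1}(iv) and (vi), any $f\in\calF^2_*$ has two sources $v_1, v_2$, the flooding splits $S$ into two subtrees $S_1\ni v_1$, $S_2\ni v_2$ with $|S_1|+|S_2|=\zeta$, and $v_j$ is the centroid of $S_j$; moreover, by Theorem~\ref{f22.1}(vii), on the region $\{v_1,v_2\}$-flooding the two arms grow at rates proportional to $|S_1|$ and $|S_2|$. Hence by Proposition~\ref{m14.1}(ii), maximizing $\beta$ is the same as
\[
\text{minimize}\quad \Phi(S_1,S_2):=\int_{S_1}\log|D^{v_1}_y|\,dy + \int_{S_2}\log|D^{v_2}_y|\,dy
\]
over all partitions of $S$ into two closed metric subtrees, with the source in each piece being that piece's centroid. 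So the first step is to observe that the optimal cut of $S$ into $S_1, S_2$ must occur at a single edge (since $S$ is a tree, any partition into two connected pieces is determined by choosing one edge and a cut-point on it), and then to see that the cut-point is forced to be a vertex: if the cut point lies strictly inside an edge, one of the two pieces $S_j$ is ``$S_i$ plus a dangling segment whose far end is an internal point of an edge,'' and a short computation (or an appeal to the dormant-arm / degree-one arguments in the proof of Theorem~\ref{f22.1}(ii),(iii)) shows this is never optimal — the entropy strictly improves by sliding the cut to the nearer endpoint, because a leaf that is a mid-edge point contributes $\int_0^{r}\log(u+\gamma)\,du$ with $\gamma$ unnecessarily small. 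So we may assume $S_1, S_2$ are subtrees meeting along a single edge $e=\{a,b\}$ of $S$, with $a$ the ``root side'' endpoint.

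Next, I would compute $\Phi$ as a function of which edge $e$ is cut. Write $S = (d+1)$-regular tree of depth $n$, rooted at $v_1^{root}$. Cutting an edge $e$ at depth $h$ from the root (i.e.\ $e$ joins a vertex at distance $h-1$ to one at distance $h$, $1\le h\le n$) separates $S$ into the ``away'' subtree $T_h$ hanging below $e$ and the ``rest'' $S\setminus T_h$. Since all edge lengths are $1$, $|T_h|$ is a fixed integer depending only on $h$ (it is $1+d+d^2+\cdots+d^{\,n-h}$ plus edge-length bookkeeping), and $|S\setminus T_h| = \zeta - |T_h|$ with $\zeta$ the total number of edges. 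The source in each piece is its centroid; for the large piece (containing the root) and $h\ge 2$, the centroid is near the root, and for the small piece $T_h$ the centroid is at (or adjacent to) its apex. The key point is that $\zeta - |T_h|$ and $|T_h|$ are most balanced when $|T_h|$ is as close to $\zeta/2$ as possible, which for a $(d+1)$-regular tree of depth $n$ with $d\ge 2, n\ge 2$ happens precisely when $e$ is the edge at depth $h=1$, i.e.\ an edge incident to the root — because $|T_1| = $ (size of one of the $d+1$ branches at the root) is the unique branch size exceeding $\zeta/2$ is impossible, and among the achievable sizes this one is the closest to $\zeta/2$ from below while all deeper cuts give much smaller $|T_h|$. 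One then checks that the map $S_1\mapsto \int_{S_1}\log|D^{v_1}_y|\,dy$ with $v_1$ the centroid is, as a function of $|S_1|$ on the relevant range, minimized by the most balanced split: this follows from Proposition~\ref{m14.1}(i) rewritten as $-|S_1| + |S_1|\log|S_1| - \int_{S_1}\log|D^{v_1}_y|\,dy = \beta_{S_1}^{(1)}$, together with monotonicity/convexity of the single-source entropy in the total length, which is a short calculus lemma. Hence the optimal cut is at an edge incident to the root, and consequently $\{v_1,v_2\}$ consists of the root (centroid of the big piece, for $n\ge 2$ — here I use that in a $(d+1)$-regular tree of depth $n\ge 2$ with one branch removed, the centroid is still the root) and the centroid of the small branch $T_1$, which is the vertex of $T_1$ adjacent to the root. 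That is exactly the assertion: $f(0)$ contains the root and an adjacent vertex.

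I expect the main obstacle to be the balancing/monotonicity step: proving rigorously that, among all partitions of $S$ into two metric subtrees each containing the centroid of that subtree, the one cutting a root-incident edge is strictly optimal. This needs two sub-facts: (a) a convexity-type statement that $g(L):=\inf\{\text{value of }\int\log|D^v_\cdot|\text{ over trees of total length }L\text{ with }v\text{ the centroid}\}$ behaves so that $g(L) + g(\zeta-L)$ is minimized at the most balanced achievable split — but since the two pieces are not arbitrary trees of prescribed length but specific subtrees of $S$, one really has to compare finitely many concrete candidates; and (b) identifying, for each cut depth $h$, the exact centroid of each piece and plugging into Proposition~\ref{m14.1}(i). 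Both are finite computations once the integer edge lengths are in hand, but writing the comparison cleanly (especially ruling out deeper cuts $h\ge 2$, where the big piece is very unbalanced against the small one) is where the real work lies; I would organize it by showing $\Phi$ is strictly decreasing in $|T_{\min}| = \min(|T_h|,\zeta-|T_h|)$ on the discrete set of attainable values, and that $h=1$ uniquely maximizes this minimum.
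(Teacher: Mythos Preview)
Your reduction to minimizing
\[
\Phi(S_1,S_2)=\int_{S_1}\log|D^{v_1}_y|\,dy+\int_{S_2}\log|D^{v_2}_y|\,dy
\]
via Proposition~\ref{m14.1}(ii) and Theorem~\ref{f22.1} is the right starting point, and the paper uses exactly this framework. However, the argument breaks down at the step where you claim the cut point must be a vertex. This is false: the unique point $v_0\notin f(t)$ for $t<\zeta$ is always an interior point of an edge. Indeed, since $S$ is a tree and $f(t)$ has exactly two connected components for $t<\zeta$, each component can send only one arm toward $v_0$; if $v_0$ had degree $d+1\ge 3$, at least one incident edge segment would remain uncovered at time $\zeta$, which is impossible. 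So your reduction to finitely many ``cuts at depth $h$'' is not valid, and the heuristic that a mid-edge leaf is always suboptimal does not apply here (the two arms meet at $v_0$, so it is not a genuine degree-$1$ obstruction in the sense of Theorem~\ref{f22.1}(iii)).

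The second gap is the balancing/convexity step. You propose to show that $\Phi$ is minimized at the most balanced achievable split by appealing to monotonicity of the single-source entropy in total length. But $\Phi$ depends on the \emph{shape} of $S_1,S_2$, not just on $|S_1|,|S_2|$, and the candidate partitions here are a one-parameter family (cut point on some edge), not a finite set. A clean convexity lemma of the kind you describe does not seem to be available, and you correctly flag this as the main obstacle; as stated, the plan does not close it.

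The paper proceeds differently and more concretely. It does not attempt a general balancing principle. Instead, assuming $S_2$ is a full branch of depth $k<n-1$ with cut point $v_0$ on some edge, it constructs an explicit competitor partition $(\wh S_1,\wh S_2)$ by sliding the cut exactly one edge closer to the root (so $\wh S_2$ has depth $k+1$), with the new source in $\wh S_2$ taken at the intermediate vertex $v_5$. It then computes the difference
\[
I=\int_{S_1}\log|D^{v_1,S_1}_y|\,dy+\int_{S_2}\log|D^{v_2,S_2}_y|\,dy-\int_{\wh S_1}\log|D^{v_1,\wh S_1}_y|\,dy-\int_{\wh S_2}\log|D^{v_5,\wh S_2}_y|\,dy
\]
directly: almost all differentials cancel, and what remains is a sum of one-dimensional integrals along the geodesic from $v_1$ to $v_2$ that are manifestly positive. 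This gives $I>0$ uniformly in the position of $v_0$ on its edge, hence $f\notin\calF^2_*$ unless $k=n-1$. The identification of the two sources as the root and an adjacent vertex then follows from the centroid computation, which you carry out correctly.
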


\begin{proof}
It follows from Theorem \ref{f22.1} (iii) and the assumptions that $S$ is a tree and $f\in\calF^2_*$ that there exists a unique $v_0\in S$ such that $v_0\notin f(t)$ for all $t< \zeta$ and $v_0$ is not a leaf. The point $v_0$ lies on the geodesic between the two points in $f(0)$. It is easy to see that $v_0$ is not a vertex. Let $S_1$ and $S_2$ be the closures of the two connected components of $S\setminus \{v_0\}$.

Recall 
the definition of $D^x_y$ stated before Proposition \ref{m14.1} and let $v_1$ be the root of $S$.
We will say that $\wt S$ is a full branch of depth $ k$ if 
$\wt S = D^{v_1}_y$ for some $y \ne v_1$ and the distance from $y$ to a leaf of $\wt S$ (different from $y$) is in the range $[k, k+1)$.

Suppose without loss of generality that $v_1\in S_1$. It is easy to see that $v_1$ is the centroid of $S_1$. 
The set
$S_2$ is  a full branch with depth $k$ for some $0\leq k < n$. Let $v_2$ be the centroid of $S_2$ and let $v_3$ be the leaf of $S_2$ that lies on the geodesic from $v_1$ to $v_2$. Note that if $1\leq k < n$ then $v_2$  is a branch point of $S$ that lies at the distance $k$ from all leaves of $S_2$ except  $v_3$. Recall that $\rho$ denotes the usual metric on $S$.
Let  $b  = \rho(v_2, v_3)$  and note that $0< b < 1$.

We will prove that the depth $k$ of $S_2$ is equal to $n-1$.
Suppose otherwise, i.e., assume that that $k < n-1$. 
Let $v_4$ lie on the geodesic from $v_3$ to $v_1$, with $\rho(v_3, v_4) = 1$, and let $v_5$ be the vertex of $S$ between $v_3$ and $v_4$. Let $\wh S_2$ be the full branch of depth $k+1$, with centroid at $v_5$ and such that $v_4$ is the closest point to $v_1$ among all points in $\wh S_2$. Let $\wh S_1$ be the closure of $S\setminus \wh S_2$. See Fig.~\ref{flood1}.

\begin{figure} \includegraphics[width=0.9\linewidth]{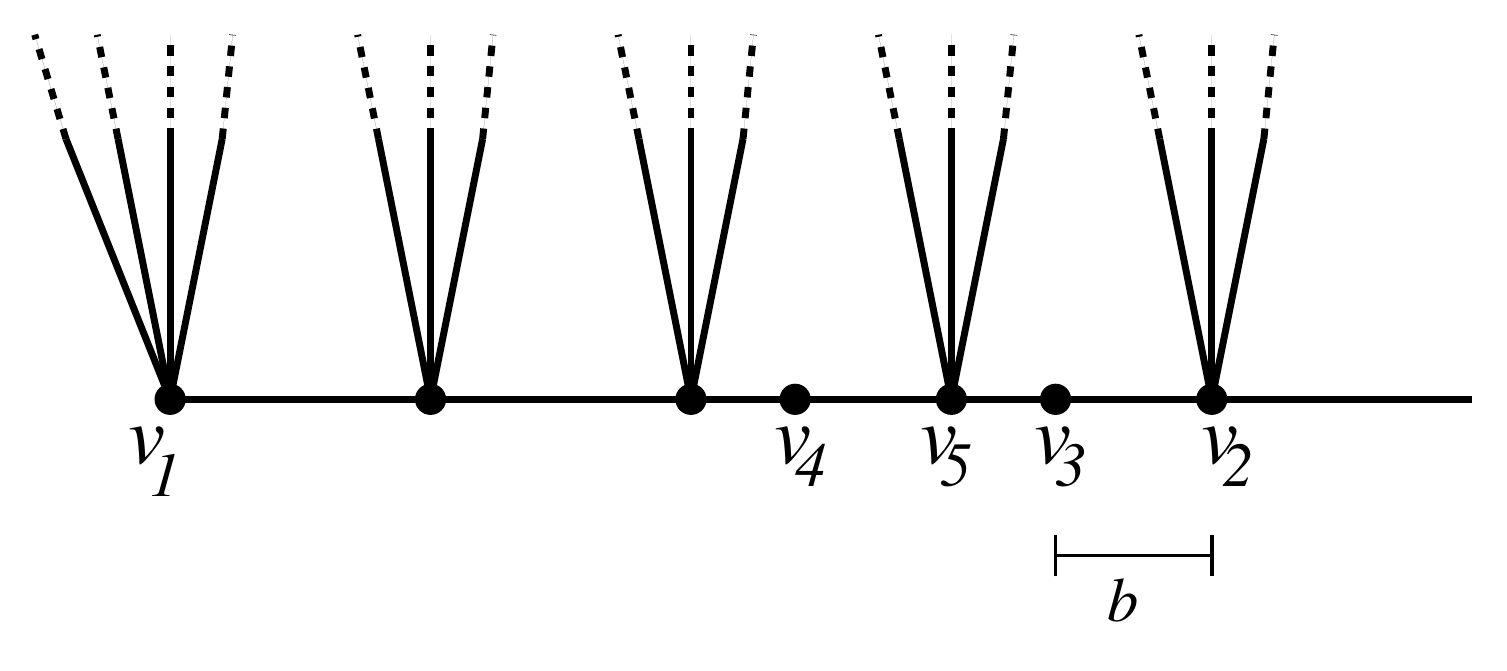}
\caption{
Illustration of the positions of $v_1, \dots, v_5$ on the $(d+1)$-regular tree for $d= 4$ and the depth $k$ of $S_2$ equal to $ n-4$.
}
\label{flood1}
\end{figure}

For any subtree $S_*$ of $S$, let $D^{v, S_*}_y$ be defined in the same way as $D^{v}_y$ but relative to $S_*$.
We will show that
\begin{align}\label{m15.1}
&\exp \left( -\zeta+\zeta \log \zeta -  \int_{S_1} \log |D^{v_1, S_1}_y| dy 
-  \int_{S_2} \log |D^{v_2, S_2}_y| dy \right) \\
&\quad < \exp \left( -\zeta+\zeta \log \zeta -  \int_{\wh S_1} \log |D^{v_1, \wh S_1}_y| dy 
-  \int_{\wh S_2} \log |D^{v_5, \wh S_2}_y| dy \right) . \nonumber
\end{align}
If \eqref{m15.1} holds then,
in view of Proposition \ref{m14.1} (ii),  there exists $g\in \calF^2$ such that  $\beta(g) > \beta(f)$ and, hence,  $f \notin \calF^2_*$.
This  implies that $k=n-1$, i.e.,  $S_2$ is a   full branch of depth $n-1$.

To show \eqref{m15.1},
it will suffice to prove that the following quantity is strictly positive,
\begin{align*}
I:=
\int_{S_1} \log |D^{v_1, S_1}_y| dy
+\int_{S_2} \log |D^{v_2, S_2}_y| dy
- \int_{\wh S_1} \log |D^{v_1,\wh S_1}_y| dy
-\int_{\wh S_2} \log |D^{v_5,\wh S_2}_y| dy.
\end{align*}
 ``Most'' differentials
which have the form $\log |D^{v_*, S_1}_y| dy$ or $\log |D^{v_*, S_2}_y| dy$ in the first two integrals
match and, therefore, cancel some differentials in the last
two integrals, because of the sign difference. 
Note that  a $y\in S$ which appears in the first integral may 
appear either in the third or the fourth integral, and then the corresponding
differentials  cancel.
The set of $y$'s not associated with this cancellation of differentials
consists  of all $y$'s on the geodesic between $v_1$ and $v_2$, and only these points. Let $G_1, G_2, G_3$ and $G_4$ be the geodesics between the following pairs of points: $(v_1,v_3), (v_3, v_2), (v_1,v_4) $ and $(v_4, v_2)$. 
We conclude  that
\begin{align}\label{m15.9}
I=
\int_{G_1} \log |D^{v_1, S_1}_y| dy
+\int_{G_2} \log |D^{v_2, S_2}_y| dy
- \int_{G_3} \log |D^{v_1,\wh S_1}_y| dy
-\int_{G_4} \log |D^{v_5,\wh S_2}_y| dy.
\end{align}

If the distance of $y\in S$ from the closest leaf of $S$ is $a$ and $m=\lfloor a \rfloor$ then 
\begin{align}\label{m29.1}
 |D^{v_1, S}_y| = a- m +\sum_{j=1}^m d^j
= a-m + \frac d {d-1} (d^m -1)
= a-m + \frac {d^{m+1}} {d-1} - \frac d {d-1}.
\end{align}

If $y \in G_1$ then the distance from $y$ to the closest leaf of $S$ is
in the range $(k + b, n)$
and $ D^{v_1, S_1}_y = D^{v_1, S}_y \setminus D^{v_1, S}_{v_3}$. 
This and \eqref{m29.1} imply that if $y \in G_1$ and the distance of $y$ from the closest leaf of $S$ is in the range $(j-1, j)$ for some $k+2\leq j \leq  n$ then
\begin{align}\label{m15.2}
|D^{v_1, S_1}_y| &= |D^{v_1, S}_y| - |D^{v_1, S}_{v_3}|\\
&= n-j+1 -\rho(y, v_1) + \frac {d^j} {d-1} - \frac d {d-1}
- b - \frac {d^{k+1}} {d-1} + \frac d {d-1} \nonumber \\
& = n-j+1 -\rho(y, v_1) + \frac {d^j} {d-1} 
- b - \frac {d^{k+1}} {d-1}.\nonumber
\end{align}
If $y \in G_1$ and the distance of $y$ from the closest leaf of $S$ is in the range $(k+b, k+1)$  then
\begin{align}\label{m15.3}
|D^{v_1, S_1}_y| = \rho(y, v_3).
\end{align}

If $y \in G_2$  then
\begin{align}\label{m15.4}
|D^{v_2, S_2}_y| = \rho(y, v_3).
\end{align}

If $y \in G_3$ then the distance from $y$ to the closest leaf of $S$ is
in the range $(k+1 + b, n)$. We have the following formula, analogous to \eqref{m15.2}.
If  $y \in G_3$ and the distance of $y$ from the closest leaf of $S$ is in the range $(j-1, j)$ for some $k+3\leq j \leq  n$ then
\begin{align}\label{m15.5}
|D^{v_1, \wh S_1}_y| 
& = n-j+1 -\rho(y, v_1) + \frac {d^j} {d-1} 
- b - \frac {d^{k+2}} {d-1}.
\end{align}
If $y \in G_3$ and the distance of $y$ from the closest leaf of $S$ is in the range $(k+1+b, k+2)$  then
\begin{align}\label{m15.6}
|D^{v_1, \wh S_1}_y|
= \rho(y, v_4).
\end{align}

If $y \in G_4$ and $y$ lies between $v_1$ and $v_4$  then
\begin{align}\label{m15.7}
|D^{v_5, \wh S_2}_y| = \rho(y, v_4).
\end{align}
If $y \in G_4$ and $y$ lies between $v_5$ and $v_2$ then we obtain using \eqref{m29.1},
\begin{align}\label{m15.8}
|D^{v_5, \wh S_2}_y| &= |D^{v_1, S}_{y}| = 
\rho(y, v_2) + \frac {d^{k+1}} {d-1} - \frac d {d-1}.
\end{align}

We combine \eqref{m15.9} and \eqref{m15.2}-\eqref{m15.8} to see that
\begin{align*}
I &=   \sum_{j=k+2}^{n}  \int_0^1 \log\left(s + \frac {d^{j}} {d-1}  -b - \frac {d^{k+1}} {d-1} \right) ds
+ \int_0^{1-b} \log s \, ds
+ \int_0^{b} \log s \, ds \\
& \quad -  \sum_{j=k+3}^{n}  \int_0^1 \log\left(s + \frac{d^{j}}{d-1} -b -\frac{d^{k+2}}{d-1}\right) ds
- \int_0^{1-b} \log s \, ds
- \int_0^{b} \log s \, ds\\
& \quad - \int_0^{1} \log \left(s + \frac {d^{k+1}} {d-1} - \frac d {d-1} \right) \, ds\\
&=  \int_0^1 \log\left(s +\frac {d^{k+2}} {d-1}  -b - \frac {d^{k+1}} {d-1}\right) ds \\
& \quad +  \sum_{j=k+3}^{n}  \int_0^1 
\left(\log\left(s + \frac {d^{j}} {d-1}  -b - \frac {d^{k+1}} {d-1} \right) - \log\left(s + \frac {d^{j}} {d-1}  -b - \frac {d^{k+2}} {d-1}\right)
\right) ds\\
& \quad - \int_0^{1} \log \left(s + \frac {d^{k+1}} {d-1} - \frac d {d-1} \right) \, ds\\
&=  \int_0^1 \left(\log\left(s +d^{k+1}  -b \right) 
- \log \left(s + \frac {d^{k+1}} {d-1} - \frac d {d-1} \right) \right) ds \\
& \quad +  \sum_{j=k+3}^{n}  \int_0^1 
\left(\log\left(s + \frac {d^{j}} {d-1}  -b - \frac {d^{k+1}} {d-1} \right) - \log\left(s + \frac {d^{j}} {d-1}  -b - \frac {d^{k+2}} {d-1}\right)
\right) ds\\
& > 0.
\end{align*}
The last inequality is due to the fact that all integrands in the ultimate representation of $I$ are positive.
We conclude that \eqref{m15.1} holds and this shows that, as we have pointed out, $f \notin \calF^2_*$. 
Hence, $S_2$ must be a full branch of depth $n-1$. In other words,  vertex $v_2$ must be a neighbor of $v_1$. 
\end{proof}

\begin{example}\label{f16.1}
Suppose that $S$ is a circle from the topological point of view and consider any integer $M\geq 1$.
The expression $ \sum_{1\leq j \leq m_k} z^k_j |\log z^k_j|$ which appears in \eqref{f5.1} is a form of entropy. 
Recall from \eqref{f5.3} that $\{z^k_j, 1\leq j \leq m_k\}$ forms a probability distribution.
For given $m_k$, the entropy is maximized if this is the uniform distribution on $[m_k]$. Assuming that $\{z^k_j, 1\leq j \leq m_k\}$ form the uniform distribution on $[m_k]$, the value of the entropy is strictly increasing in $m_k$. Since $f(0) $ consists of $M$ points, the highest possible value for $m_k$ is $2M$. It is easy to see that there is only one function $f$, up to rotations of $S$, with the property that $m_k = 2M$ and $\{z^k_j, 1\leq j \leq m_k\}$ is the uniform distribution for all $k$. For this function $f$, $\kf=1$, the $M$ points comprising $f(0)$ are equally spaced around the circle and $z^1_j=1/(2M)$ for all $j$.
We conclude that for a fixed $M$, $\beta(f)$ is maximized at this function $f$ and its rotations.
\end{example}

\begin{example}
If $S$ is a line segment from the topological point of view and $M\geq 1$ then analysis similar to that in Example \ref{f16.1} shows that $\beta(f)$ is maximized by a single function $f$ such that $f(0)$ consists of $M$ points at the distances $(i+1/2)\zeta / M$, $i=0,1, \dots, M-1$, from one of the endpoints of $S$. 
We also have $ \kf=1$ and $z^1_j=1/(2M)$ for all $j$.
\end{example}

\begin{example}\label{f16.2}

Suppose that $\calG$ is a star graph with four vertices and three edges $e_1, e_2$ and $e_3$.  
Let $v_*$ be the center of the star, i.e., the vertex adjacent to the three edges 
$e_1, e_2$ and $e_3$.  
We will simplify the notation and write $r_1, r_2$ and $r_3$ instead of $r_{e_1}, r_{e_2}$ and $r_{e_3}$.
Let $p_k = r_k/(r_1+r_2 + r_3)$ for $k=1,2,3$. 
Consider floodings with one source, i.e., $M=1$.
Assume without loss of generality that $r_1 \geq r_2 \geq r_3$. We will argue that
Theorem \ref{m6.1} implies the following assertions.

\medskip

(i) If $r_1 \leq r_2 + r_3$ 
then $\beta(f)$ is maximized by a single function $f$ such that $f(0)=v_*$, $\kf=1$ and $ z^1_j=p_j$ for  $j=1,2,3$. 

\medskip

(ii) Suppose that  $r_1 > r_2 + r_3$. Let $q_1 = 1/2$ and $q_j = r_j/(2(r_2 +r_3))$ for $j=2,3$. 
Then the functional $\beta(f)$ is maximized by a single function $f$ with the following properties: 

(a) $\kf = 2$, 

(b) $f(0)$ lies on the edge $e_1$ at the distance $(r_1-r_2 - r_3) /2$ from
 $v_*$, 

(c) $t_0 = 0 < t_1 = (r_1 - r_2-r_3) /2 < t_2 = \zeta$,

(d) $m_1 = 2$ and $ z^1_j=1/2$ for  $j=1,2$,

(e) $m_2 = 3$ and $z^2_j=q_j$ for  $j=1,2,3$.

\medskip

The point $f(0)$ must be the centroid of $S$ by
Theorem \ref{m6.1} (i). This implies easily the explicit formulas
for the position of $f(0)$ given above. 
The remaining assertions follow  from part (ii) of  Theorem \ref{m6.1}.

\end{example}

\begin{example}\label{m15.10}
We will examine the same graph as in Example \ref{f16.2} but we will be concerned with floodings with two sources, i.e., we will take $M=2$.   Explicit calculations (performed mostly with Mathematica) do not seem to yield interesting formulas so we will limit ourselves to some special cases and partial results. Still we believe that the example illustrates well some properties of floodings.

Recall the setup and notation from Example \ref{f16.2}. Consider $f \in \calF^2_*$. The set $f(0)$ contains two points; we will call them $y_1$ and $y_2$.
It follows from Theorem \ref{f22.1} that only the following three scenarios are possible.

(1)  $y_1$ is in the interior of an edge and $y_2=v_*$.

(2) $y_1$ and $y_2$ belong to the interior of the same edge.

(3) $y_1$ and $y_2$ belong to the interiors of different edges.

We will show that all three cases arise for some values of $r_1, r_2$ and $r_3$.

For $j=1,2$, let $S_j(t)$ be the connected component of $f(t)$ which contains $y_j$, for $t\in[0, \zeta)$. Let $S_j$ be the closure of $\bigcup_{1\leq t < \zeta} S_j(t)$. Assume without loss of generality that $v_*\in S_2$. Let $w_1\in S$ be the point
which does not belong to $f(t)$ for $t<\zeta$ and lies between $y_1$ and $y_2$. In other words, $w_1$ belongs to the intersection of the boundaries of $S_1$ and $S_2$. Let $w_2$ be the endpoint of $e_1$ opposite to $v_*$.

\medskip

(1) 
Suppose that $r_1=r_2=r_3 >0$.
Theorem \ref{f22.1} (ii) says that $f$ has no dormant arms so $S_2$ must contain a neighborhood of $v_*$. By Theorem \ref{f22.1} (iv), $S_1$ must be contained in one of the edges. Since the other two edges belong to $S_2$, its centroid $y_2$ is located at $v_*$. The centroid of $S_1$ must be an interior point of one of the edges. This shows that if $r_1= r_2=r_3$ then (1) holds.

We will analyze this case in more detail. Note that $\kf = 1$. The set $S_1$ is a line segment contained in one of the edges. By symmetry, we can suppose that $S_1 \subset e_1$. The point $y_1$ lies in the middle of $S_1$ by Theorem \ref{f22.1} (vi).  This and Corollary \ref{m21.4} imply that  $\rho(w_2, y_1) = \rho(y_1,w_1) = \rho(w_1, v_*) = r_1/3$. In other words, $y_1$ lies at the distance $2r_1/3$ from $v_*$. 

Given the positions of $y_1, y_2$ and $w_1$, it follows easily from Theorem \ref{f22.1} (vii) that the arms of the flooding growing from $y_1$ move at the rate $1/9$, and the arms growing from $y_2$ move at the speeds $1/9$, $1/3$ and $1/3$. 

\medskip

(2) 
Suppose that $r_1 > 128(r_2+r_3) $ and $r_2 \geq r_3$. We will argue that $y_1$ and $y_2$ lie in the interior of the edge $e_1$.

Consider other options. The points $y_1$ and $y_2$ cannot both lie outside the interior of $e_1$ because then $y_1$ would not be the centroid of $S_1$ or $y_2$ would not be the centroid of $S_2$.

Next suppose that $y_2 = v_*$ and $y_1$ lies on $e_1$. Then, using the notation and reasoning as in case (1) above, we would have  $\rho(w_2, y_1) = \rho(y_1,w_1) = \rho(w_1, v_*) = r_1/3$.
But then $y_2$ would not be the centroid of $S_2$ because $r_1 > 128(r_2+r_3) $. So it is impossible that $y_2 = v_*$ and $y_1$ lies on $e_1$.

The next case is when $y_1$ lies on $e_1$, $y_2$ lies on $e_3$ and the point $w_1$ at the intersection of the boundaries of $S_1$ and $S_2$ lies in $e_1$. Since $r_2 \geq r_3$, $y_2$ cannot be the centroid of $S_2$ so this observation eliminates this case.

Suppose that $y_1$ lies on $e_1$, $y_2$ lies on $e_2$ and the point $w_1$ at the intersection of the boundaries of $S_1$ and $S_2$ lies in $e_1$. By Corollary \ref{m21.4}, the arms emanating from $y_1$ move at constant speeds. Just like in case (1), we see that they move at the same speed, say, $z_1$. By Lemma \ref{m21.3}, the arm of  $f$ moving from $v_*$ towards $w_1$ also has the speed $z_1$. By Theorem \ref{f22.1} (vii), the speed of the arm moving along $e_2$ towards $v_*$ must be greater than or equal to $z_1$. By the same theorem, the speed of the arm moving along $e_2$ towards the  vertex  
(say, $w_3$) at the other end of $e_2$ must be also $z_1$ or greater. Multiplying the speeds by time intervals, we obtain the following inequality,
\begin{align*}
\rho(w_2, y_1) + \rho(y_1, w_1) \leq \rho(w_1, v_*) + \rho(v_*, y_2) + \rho(y_2, w_3).
\end{align*}
Since $y_2$ is the centroid of $S_2$, we must have $\rho(w_1, v_*) \leq \rho(y_2, w_3) \leq \rho(v_*, w_3) = r_2$. 
It follows that
\begin{align*}
r_1 - r_2 &= \rho(w_2, y_1) + \rho(y_1, w_1) + \rho(w_1, v_*) -r_2
\leq \rho(w_2, y_1) + \rho(y_1, w_1) \\
&\leq \rho(w_1, v_*) + \rho(v_*, y_2) + \rho(y_2, w_3) \leq 2 r_2.
\end{align*}
Hence, $r_1 \leq 3 r_2$, which is impossible in view of the assumption
that $r_1 > 128(r_2+r_3) $.

The next case to be eliminated is when $y_1$ lies on $e_1$, $y_2$ lies on $e_2$ and the point $w_1$ at the intersection of the boundaries of $S_1$ and $S_2$ lies in $e_2$. 

It is easy to see that $\kf=2$ in this case.
We will specify the edges where the functions $x^k_j$ take values. First, we declare that $x^1_1$ and $x^1_2$ represent arms emanating from $y_2$, $x^1_3$ and $x^1_4$ represent arms emanating from $y_1$, $x^2_1$ and $x^2_2$ represent arms of $S_2$, and $x^2_3, x^2_4$ and $x^2_5$ represent arms of $S_1$.
This and the following conditions identify the relationship between $x^k_j$'s and $S_n$'s uniquely,
\begin{align*}
x^1_1((t_0, t_1)) &\subset e_2 \cap \ol{y_2, w_3},\qquad
x^1_2((t_0, t_1)) \subset e_2 \cap \ol{y_2, v_*},\\
x^1_3((t_0, t_1)) &\subset e_1 \cap \ol{y_1, w_2},\qquad
x^1_4((t_0, t_1)) \subset e_1 \cap \ol{y_1, v_*},\\
x^2_1((t_1, t_2)) &\subset e_2 \cap \ol{y_2, w_3},\qquad
x^2_2((t_1, t_2)) \subset e_2 \cap \ol{y_2, v_*},\\
x^2_3((t_1, t_2)) &\subset e_1 \cap \ol{y_1, w_2},\qquad
x^2_4((t_1, t_2)) \subset e_2 ,\qquad
x^2_5((t_1, t_2)) \subset e_3 .
\end{align*}

Note that the function $z\to -z \log z$ is increasing on $(0, 1/e)$ and its
maximum on $(0,1)$ is equal to $e$ and is attained at $z=1/e$.

By Corollary \ref{m21.4}, $z^1_3 = z^2_3$. Since $y_1$ is the centroid of $S_1$, $\rho(y_1, w_2) \geq r_1/2$.
By Theorem \ref{f22.1} (vii), 
$z^1_3 \geq  (r_1/2)/\zeta$. It follows that
\begin{align}
-(t_1-t_0) z^1_3 \log z^1_3 
- (t_2-t_1) z^2_3 \log z^2_3 
&= -\rho(y_1, x^1_3(t_1)) \log z^1_3 
- \rho( x^1_3(t_1), w_2) \log z^2_3 \nonumber \\
&= -\rho(y_1, w_2)\log z^1_3
\leq -\rho(y_1, w_2)\log (r_1/(2\zeta)).\label{m22.1}
\end{align}
We use the fact that $y_1$ is the centroid of $S_1$ and
 Theorem \ref{f22.1} (vii) to see that $z^1_4 = z^1_3$ and, therefore, 
$z^1_4=z^1_3 \geq  (r_1/2)/\zeta$. Thus,
\begin{align}\label{m22.2}
-(t_1-t_0) z^1_4 \log z^1_4 
&= -\rho(y_1, v_*) \log z^1_4
\leq -\rho(y_1, v_*)\log (r_1/(2\zeta)).
\end{align}
Since $r_1 > 128(r_2+r_3) $, Corollary \ref{m21.4} and Theorem \ref{f22.1} (vii) imply that each speed $z^1_1,z^1_2, z^2_1$ and $z^2_2$ is less than $(r_2/2)/\zeta < (r_2/2)/ r_1 < 1/256$. 
We obtain
\begin{align}\label{m22.3}
&-(t_1-t_0) z^1_1 \log z^1_1 -(t_2-t_1) z^2_1 \log z^2_1 
-(t_1-t_0) z^1_2 \log z^1_2 -(t_2-t_1) z^2_2 \log z^2_2 \\
&\leq -(t_1-t_0) (1/256) \log (1/256) -(t_2-t_1) (1/256) \log (1/256) \nonumber \\
&\quad-(t_1-t_0) (1/256) \log (1/256) -(t_2-t_1) (1/256) \log (1/256) \nonumber \\
&=  -(\zeta/64) \log (1/256).\nonumber
\end{align}
Recall that the function $z\to -z \log z$ attains its
maximum on $(0,1)$  at $z=1/e$.
The interval $f([t_1,t_2])\cap e_1$ cannot be longer than the combined length of edges $e_2$ and $e_3$ because $y_1$ is the centroid of $S_1$. Hence, $t_2 - t_1 \leq 2 (r_2 + r_3)$.
We obtain
\begin{align}\label{m22.4}
&- (t_2-t_1) z^2_4 \log z^2_4 - (t_2-t_1) z^2_5 \log z^2_5\\
&\leq - (t_2-t_1) (1/e) \log (1/e) - (t_2-t_1) (1/e) \log (1/e) \nonumber \\
&\leq  (2/e) (r_2 + r_3) .\nonumber
\end{align}
Combining \eqref{m22.1}-\eqref{m22.4} yields
\begin{align}\label{m22.5}
\beta(f) &\leq 
-\rho(y_1, w_2)\log (r_1/(2\zeta)) -\rho(y_1, v_*)\log (r_1/(2\zeta))\\
&\qquad-(\zeta/2) \log (1/8) -(1/2) (r_2 + r_3) \log (1/8) \nonumber \\
&= -r_1 \log (r_1/(2\zeta))
 -(\zeta/64) \log (1/256) + (2/e) (r_2 + r_3).\nonumber
\end{align}
We have
\begin{align}\label{m22.10}
-r_1 \log (r_1/(2\zeta)) &= -r_1 \log (r_1/(2(r_1+r_2+r_3)))
\leq -r_1 \log (r_1/(2(r_1+r_1/128 )))\\
& =-r_1 \log (64/129) \leq -r_1 (5/4) \log (1/2),
\nonumber
\end{align}
\begin{align}\label{m22.11}
 -(\zeta/64) \log (1/256) & \leq -(2r_1/64) 8\log (1/2)
=-r_1(1/4) \log (1/2) ,
\end{align}
\begin{align}\label{m22.12}
 (2/e) (r_2 + r_3) & \leq (2/e) (r_1/128) 
\leq -r_1(1/(64 e)) \log (1/2) .
\end{align}
It follows from \eqref{m22.5}-\eqref{m22.12} that
\begin{align}\label{m22.13}
\beta(f) \leq - r_1 (7/4) \log  (1/2).
\end{align}

We will consider an alternative flooding that does not posses properties listed in Theorem \ref{f22.1} (and so it is not in $\calF^2_*$) but nevertheless has a higher value of $\beta$. Let $w_4$ and $w_5$ be points on $e_1$, at the distance $r_1/4$ from each end of this edge. Let $g$ be the flooding with sources $w_4$ and $w_5$, such that $z^1_1=z^1_2=z^1_3=z^1_4= 1/4$. In other words, the four arms emanating from $w_4$ and $w_5$ move at the identical speeds and completely fill $e_1$ after time $r_1$. Hence, $t_1= r_1$. The remaining part of $g$, after time $t_2$, is defined arbitrarily.
We have, using \eqref{m22.13},
\begin{align*}
\beta(g) \geq -\sum_{j=1}^4 (t_1-t_0) z^1_j \log z^1_j
= - r_1 \log(1/4) = -r_1 2 \log(1/2) > \beta(f).
\end{align*}
We have thus eliminated the case when $y_1$ lies on $e_1$, $y_2$ lies on $e_2$ and the point $w_1$ at the intersection of the boundaries of $S_1$ and $S_2$ lies in $e_2$. 

The final case to consider is when $y_1$ lies on $e_1$, $y_2$ lies on $e_3$ and  $w_1\in e_3$. This case can be dealt with exactly in the same way as the case when  
$y_1\in e_1$, $y_2\in e_2$ and  $w_1\in e_2$.

We conclude that if $r_1 > 128(r_2+r_3) $  then $y_1$ and $y_2$ lie in the interior of the edge $e_1$.

\medskip
(3) 
Suppose that $r_3 \leq r_2 \leq r_1$, $r_2 > r_1/3 + r_3$ and $r_1 > r_3/3 + r_2$.
For example, we could take $r_3 = 2/5$, $r_2 = 5/6$ and $r_1=1$.
We will show that $y_1$ and $y_2$ must belong to the interiors of two different edges.

Suppose that $y_1$ and $y_2$ belong to $e_1$. Assume without loss of generality that  $y_2=v_*$ or $y_2$  lies between $y_1$ and $v_*$.  Then the argument used in case (1) shows that 
 $\rho(w_2, y_1) = \rho(y_1,w_1) = \rho(w_1, y_2) $. These quantities are less than or equal to $r_1/3$ so $y_2$ cannot be the centroid of $S_2$, since we assumed that  $r_2 > r_1/3 + r_3$. Thus $y_1$ and $y_2$ cannot belong to $e_1$ simultaneously.

We can eliminate the possibility that $y_1$ and $y_2$ belong to $e_3$ in the analogous way, using the assumption that $r_1 > r_3/3 + r_2$.

The assumptions that $r_3 \leq r_2 \leq r_1$ and $r_2 > r_1/3 + r_3$ imply that  $r_1 > r_2/3 + r_3$. This and a reasoning similar to the one employed earlier show that $y_1$ and $y_2$ cannot belong to $e_2$ at the same time.

We conclude that $y_1$ and $y_2$  belong to the interiors of two different edges.
\end{example}

\begin{example}
Let the Cartesian product 
of path (linear) graphs with $m$ and $n$ vertices be denoted $\calG_{m,n}$.
Suppose without loss of generality that $m\geq n$, let $S$ be the metric graph corresponding to $\calG_{m,n}$ with all edges of length 1, and consider a flooding $f\in F^1_*$ of $S$. According to 
Theorem \ref{f22.1} (v), $f(t)$ does not contain loops for any $t< \zeta$. So if $t<\zeta$ then for every $1\times 1$ square in $\calG_{m,n}$, 
some part of the boundary of the square with positive length does not intersect  $f(t)$.

We will call a vertex internal if it has degree 4.
The number of non-internal vertices
is bounded by $4m$. 
Suppose that $ 32 m< t < \zeta$ and $f\in \calF^1_*$. If for an edge $e$ we have $f(t) \cap e\ne \emptyset$ then we choose
arbitrarily one of the endpoints of $e$ that belongs to $f(t) \cap e$ and we call it $v_e$. If $f(t) \cap e= \emptyset$ then we choose $v_e$ in an arbitrary way. Let $\mu(v)$ be the sum of the lengths of $f(t) \cap e$ over all $e$ such that $v=v_e$. Note that $\mu(v) \leq 4$ for all $v$. Since the total length of line segments comprising $f(t)$ is $t$, it follows that the number of vertices with $\mu(v) >0$ is greater than or equal to $t/4 > 32m/4 = 8m$. It follows that the number $N_1$ of internal vertices with $\mu(v)>0$ is at least $t/4 - 4m> t/8$. Let $Q_v$ be the $1\times 1$ square such that $v$ is  the northwest corner of $Q_v$. If $\mu(v)>0$ then the boundary of $Q_v$ cannot be totally contained in $f(t)$ or its complement. Hence the boundary of $Q_v$ contains at least two leaves of $f(t)$. Each of the two leaves can belong to the boundary of at most four squares so the number of leaves of $f(t)$ is at least $N_1/2$. This implies that that  the number of leaves of $f(t)$ is bounded below by $t/16$ for $ 32 m< t < \zeta$.  

If $f(t) \cap e$ is non-empty and strictly smaller than $e$ then we will call $e$ incomplete. If $f(t) \cap e=e$ then we will call $e$ complete. We will say that $e_1$ is an offspring of $e_2$ if $e_1$ is incomplete, $e_2$ is complete and the two edges share an endpoint. 
There are at most four incomplete edges without a parent.
Every parent has at most 6 children.
The sum of the lengths of all complete edges is at most $f(t)$. The number of leaves is equal to the number of incomplete edges and so it is bounded by $6f(t)+4$.

We conclude that the number of leaves of $f(t)$ is in the range $[t/16, 6t+4]$ for $ 32 m< t < \zeta$.  
This claim agrees  with Theorem 4.2 in \cite{twinp} at the intuitive level.

\end{example}

\section{Acknowledgments}

We are grateful to 
Omer Angel, J\'er\'emie Bettinelli, Ivan Corwin,
Ted Cox, Nicolas Curien, Michael Damron,  Dmitri Drusvyatskiy, Rick Durrett, Martin Hairer, Christopher Hoffman,
Doug Rizzolo, Timo Seppalainen, Alexandre Stauffer, Wendelin Werner and Brent Werness
for the most helpful advice.
The first author is grateful to the Isaac Newton Institute for Mathematical Sciences, where this research was partly carried out, for the hospitality and support.

\bibliographystyle{alpha}
\bibliography{isolated}

\end{document}